\theoremstyle{plain}
\newtheorem{theo}{Theorem}[section]
\newtheorem{pro}[theo]{Proposition}
\newtheorem{lemm}[theo]{Lemma}
\newtheorem{cor}[theo]{Corollary}
\newtheorem{rem}[theo]{Remark}
\newtheorem{deff}[theo]{Definition}
\newcommand{\avert}[1]{-\hskip-0.41cm\int_{#1}}
\numberwithin{equation}{section}
\begin{document} 
\allowdisplaybreaks

\title[$L^p$ estimates for magnetic Schr\"odinger operators]{Maximal inequalities and Riesz transform estimates on $L^p$ spaces for magnetic Schr\"odinger operators II}
\author{Besma Ben Ali}
\address{ B. Ben Ali
\\
Universit\'e de Paris-Sud, UMR du  CNRS  8628
\\
91405 Orsay Cedex, France} \email{besma.ben-ali@math.u-psud.fr}
\subjclass[2000]{35J10; 42B20; 58G25; 81Q10; 81V10}
\keywords
{Schr\"odinger operators, maximal inequalities, Riesz transforms, Fefferman-Phong inequality, reverse H\"older estimates, magnetic field}

\begin{abstract}
The paper concerns the  magnetic Schr\"odinger operator $H(\textbf{a},V)=\sum_{j=1}^{n}(\frac{1}{i}\frac{\partial}{\partial
x_{j}}-a_{j})^{2}+V $ on $\mathbb{R}^n$. We prove some $L^p$ estimates on the Riesz transforms of $H$ and we establish some related maximal inequalities.
The conditions that we arrive at, are essentially based on the control of the magnetic field by the electric potential.

\end{abstract}
\date{}
\maketitle

\begin{quote}
{\tableofcontents}
\end{quote}
\vline

\section{Introduction}
Consider the Schr\"odinger operator with magnetic field
\begin{equation}\label{def}H(\textbf{a},V)=\sum_{j=1}^{n}(\frac{1}{i}\frac{\partial}{\partial
x_{j}}-a_{j})^{2}+V \, \textrm{in}\, \, \mathbb{R}^{n},\qquad
n\geq 2,
 \end{equation}
  where $\textbf{a}=(a_{1},a_{2},\cdots,a_{n}):
\mathbb{R}^{n}\rightarrow \mathbb{R}^{n}$ is the magnetic potential and $V:\mathbb{R}^{n}\rightarrow \mathbb{R}$ is the electric potential.
Let \begin{equation}B(x) = curl\, \textbf{a}(x)= (b_{jk}(x))_{1\leq j,k\leq n}\end{equation}
 be the magnetic field generated by $\textbf{a}$, where
 \begin{equation}
 b_{jk} = \frac{\partial a_{j}}{\partial x_{k}}- \frac{\partial a_{k}}{\partial x_{j}}.
 \end{equation} 
 We will assume that $\textbf{a}\in L^{2}_{loc}(\mathbb{R}^n)^n$ and $V\in L^{1}_{loc}(\mathbb{R}^n), \,\, V\geq 0$.
Let
 \begin{equation} L_{j}=
\frac{1}{i}\frac{\partial}{\partial x_{j}}-a_{j} \qquad
\textrm{for}\qquad 1\leq j\leq n, 
 \end{equation} 
 Set $L=(L_{1},\ldots, L_{n})$ and $
|L u(x)|=\big{(} \sum_{j=1}^{n} |L_{j}u(x)|^{2}\big{)}^{1/2}.$
 
Note that $ L^{\star}_{j}=L_{j}$ for all $1\leq j\leq n,$. Let
$$L^{\star}=(L^{\star}_{1},\ldots, L^{\star}_{n})^{T}.$$
We define the form $\mathcal{Q}$ by
\begin{equation}\mathcal{Q}(u,v)= \sum_{k=1}^{n} \int_{\mathbb{R}^n}
L_{k} u . \overline{L_{k} v} dx+\int_{\mathbb{R}^n}V  u . \bar{ v} dx ,
\end{equation} 
with domain $\mathcal{D}(\mathcal{Q})=\mathcal{V}\times \mathcal{V}$ where $$\mathcal{V} = \{u\in
L^{2}, L_{k}u \in L^{2} \,\, \textrm{for } k=1,\ldots,n \,\,\textrm{and}\, \,  \sqrt{V} u \in L^{2}\}.$$
 Let $\dot{\mathcal{V}}$ be the closure of 
$C^{\infty}_{0}\mathbb{R}^n$ under the semi-norm $$
\|f\|_{\dot{\mathcal{V}}}= \big( \|L f\|_{2}^2 + \|V^{1/2} f\|_{2}^2\big)^{1/2}.
$$
 We denote $H(\textbf{a},V)=H$, the self-adjoint operator on $L^2(\mathbb{R}^n)$ associated to this symmetric and closed form.
 
  The domain of $H $ is given by:
 $$\mathcal{D}(H)=\{u \in \mathcal{D}(\mathcal{Q}), \exists v\in L^{2} \,\, \textrm{so that}\,\, \mathcal{Q}(u,\phi)=\int_{\mathbb{R}^n} v\bar{\phi} dx, \, \forall \phi \in \mathcal{D}(\mathcal {Q}) \}.$$

 The operators $L_{j} H(\textbf{a},V)^{-1/2}$ are called the Riesz transforms associated with $H(\textbf{a},V)$.
 We know that
 \begin{equation}\label{LL2} 
 \sum^{n}_{j=1}\| L_{j} u\|^{2}_{2}+ \|V^{1/2}u\|^{2}_{2} = \| H(\textbf{a},V)^{1/2} u\|^{2}_{2}, \qquad \forall u \in \mathcal{D}(\mathcal{Q}) =\mathcal{D}(H(\textbf{a},V)^{1/2}).
 \end{equation}
 Hence, the operators $L_{j} H(\textbf{a},V)^{-1/2}$ are bounded on $L^{2}(\mathbb{R}^{n})$, for all $j=1,\ldots,n$.

 The aim of this paper is to establish the $L^p$  boundedness of the operators $L_{j} H(\textbf{a},V)^{-1/2}$ and $V^{\frac{1}{2}}H(\textbf{a},V)^{-\frac{1}{2}}$. 

In the case where the magnetic potential is absent, that is, $H(\textbf{a},V)=-\Delta+V$ and $LH(\textbf{a},V)^{-\frac{1}{2}}=\nabla(-\Delta+V)^{-\frac{1}{2}}$, many important studies have been established. We mention the works of Helffer-Nourrigat \cite{HNW}, Guibourg\cite{Gui2} and Zhong \cite{Z}, in which they considered the case of polynomial  potentials. A generalization of their results was given by Shen \cite{Sh1}, he proved the $L^p$ boundedness of Riesz transforms of Schr\"odinger operators with electric potential contained in certain reverse H\"older classes. Auscher and I improved this result in \cite{AB}, using a different approach based on local estimates. Note that this approach can be extended to more general spaces for instance some Riemannian manifolds and Lie groups( see \cite{BB}).

In the presence of the magnetic field, we know that these operators are of weak type (1.1) and hence, by interpolation, are  $L ^p$ bounded for all $ 1 <p \leq  2$.
This result was proved by Sikora using  the finite speed propagation property \cite{Sik}. Independantly, Duong, Ouhabaz and Yan \cite{DOY} have proved the same result using another method.\\

The main purpose of this work is to find sufficient conditions on the electric potential and the magnetic field, for which the Riesz transforms of $H(\textbf{a},V)$ are $L^p$ bounded for the range $p>2$. 
 Note that, because of the gauge invariance of the operator $H(\textbf{a},V)$ and the nature of the $L^{p}$ estimates, any such quantitative condition should be imposed on the magnetic field $B$ , not directly on $\textbf{a}$. 

 In a previous paper \cite{Be}, many important results about this problem were established. 
 Under certain conditions used by Shen in \cite{Sh4} and given in terms of the reverse H\"older inequality on the magnetic field and the electric potential, we proved that the Riesz transforms of the pure magnetic Schr\"odinger operator $H(\textbf{a},0)$ are $L^p$ bounded for all $p\geq 2$. We have also extended the results of \cite{AB} about $-\Delta+V$ to the magnetic operator $H(\textbf{a},V)$. 

 The second aim of this article is to establish important maximal inequalities related to the $L^p$ behaviour of $L_{j}L_{k} H(\textbf{a},V)^{-1}$, $V^{1/2}L H(\textbf{a},V)^{-1}$ and other operators called the second order Riesz transforms. Estimates on these operators are of great interest in the study of spectral theory of $H(\textbf{a},V)$. 
 There are rather few works around the behaviour of these operators. We cite Guibourg who considered the polynomial case and established an $L^2$ estimate \cite{Gui1}. Shen \cite{Sh4}, generalised \cite{Gui1} and proved under reverse H\"older conditions, the $L^p$ boundedness of $L_{j}L_{k} H(\textbf{a},V)^{-1}$.  Independantly and under the same conditions, we have proved and generalised the results of Shen in \cite{Be}.
Note that in \cite{Sh4} and \cite{Be}, the contribution of the magnetic field was controlled by introducing an auxiliary function $m(.,\omega)$ defined by Shen \cite{Sh1} for $RH_{\infty}$ class, he generalizes an early version of a useful auxiliary function for polynomial potentials.\\
In this paper we will use another approach, the contribution of the magnetic field will be controlled by the electric potential and the magnetic Schr\"odinger operater will be treated us a perturbation of the Laplace operator $-\Delta$.

Here, our assumptions on potentials will be given in terms of reverse H\"older inequality. Let us recall the definition of these weight classes:
  
 \begin{deff}
 Let $\omega \in L^q_{loc}(\mathbb{R}^n)$,  $\omega>0$ almost everywhere, $\omega\in RH_{q}$, $1<q\le\infty$, the class of the reverse H\"older weights with exponent $q$, if there exists a constant $C$ such that  for any cube $Q$ of $\mathbb{R}^n$,

 \begin{equation}\label{2RHclass}
 \Big{(} \avert{Q} {\omega^{q}}(x)dx \Big{)}^{1/q}\leq C\Big{(}\avert{Q} \omega (x)dx \Big{)}.
 \end{equation}
If $q=\infty$, then the left hand side is the essential supremum on $Q$. 
The smallest $C$ is called the $RH_{q}$ constant of $\omega$.
 \end{deff}
\textit{A note about notations:}
Throughout this paper we will use the following notation $\avert {Q}\omega=\frac{1}{|Q|}\int_{Q}\omega.$
$C$ and $c$ denote constants. As usual, $\lambda Q$ is the cube co-centered with $Q$ with sidelength $\lambda$ times that of $Q$.

We now state our main result:
  \begin{theo}\label{th:1a'}Suppose $\textbf{a}\in L^{2}_{loc}(\mathbb{R}^{n})^{n}$ and $V  \in RH_{q}$, $1<q\le +\infty$. Also assume that there exists a constant $C>0$ such that for any cube $Q$ in $\mathbb{R}^{n}$:

 \begin{equation}\label{eq:star}
 \left\{\begin{array}{ll}
 \sup_{Q} |B| \leq C \avert {Q} V, \\ \sup_{Q} | \nabla B |\leq C  (\avert{Q} V)^{3/2},
  \end{array}
 \right.
 \end{equation}  
where $| B |=\sum_{j,k} | b_{jk} |$ and $\nabla=(\frac{\partial}{\partial x_{1}},\ldots,\frac{\partial}{\partial x_{n}})$ . Then, there exists an $\epsilon >0$ such that for any $1\leq p<q^{\star}+\epsilon$, there exists a constant $C_{p}>0$, depending on $V$ such that
 \begin{equation}
 \label{eq:Lp}  \| LH(\textbf{a},V)^{-\frac{1}{2}}(f)\|_{p}\leq C_{p} \| f\|_{p},
 \end{equation}
 for any $f\in C_{0}^{\infty}(\mathbb{R}^{n})$ if $p>1$,\\ and
 $$|\{x\in \mathbb{R}^n\, ; \,  |L f(x)| > \alpha\}| \le \frac{C_{1}}{\alpha} \|H(\textbf{a},V)^{\frac{1}{2}}f\|_{1},$$
 for any $\alpha>0$ and $f\in C_{0}^{\infty}(\mathbb{R}^{n})$ if $p=1$.

  \end{theo}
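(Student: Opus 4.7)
\textbf{Plan for the proof of Theorem \ref{th:1a'}.}

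The $p=1$ endpoint is already available: as recalled in the introduction, the weak-$(1,1)$ bound for $LH(\textbf{a},V)^{-1/2}$ was proved by Sikora \cite{Sik} and, independently, by Duong--Ouhabaz--Yan \cite{DOY}, and Marcinkiewicz interpolation against the $L^{2}$ identity \eqref{LL2} covers $1<p\leq 2$. The content of the theorem is therefore the strong-type bound \eqref{eq:Lp} for $2<p<q^{\star}+\epsilon$, which I would attack by adapting to the magnetic setting the local-estimate strategy of \cite{AB,Be} for $-\Delta+V$.

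The technical heart of the argument is a reverse-H\"older improvement for the gauge derivative: for every cube $Q$ and every $u$ with $Hu=0$ on a neighbourhood of $2Q$,
\[
\Big(\avert{Q}|Lu|^{s}\Big)^{1/s}\;\le\;C\,\Big(\avert{2Q}|Lu|^{2}\Big)^{1/2}
\]
for some exponent $s>2$ depending only on $n$, $q$ and the $RH_{q}$ constant of $V$. The proof starts from a Caccioppoli inequality for $L$, obtained by testing $Hu=0$ against $\eta^{2}u$ with a cutoff $\eta$, combined with the gauge-invariant commutator identity $[L_{j},L_{k}]=\frac{1}{i}b_{jk}$; the resulting error terms carry a factor $|B|$, which by the first line of \eqref{eq:star} is controlled by $\avert{Q}V$ and absorbed into the $V$-part of the equation. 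A Sobolev--Poincar\'e inequality and Gehring's self-improvement lemma then yield the exponent $s>2$. The second line of \eqref{eq:star} plays the analogous role after differentiating the equation once, in order to control the second-order quantities that appear when $H$ is used instead of $H^{1/2}$.

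Once the local gain is in hand, the rest of the argument follows the template of \cite{AB}: at level $\alpha$, decompose $f\in C_{0}^{\infty}$ via a Calder\'on--Zygmund stopping time adapted to the maximal function of $|H(\textbf{a},V)^{1/2}f|$; estimate $LH(\textbf{a},V)^{-1/2}f$ on the good set using the $L^{2}$ identity \eqref{LL2} and H\"older, and on the union of the bad cubes using the reverse-H\"older inequality above, which transfers the $L^{2}$ estimate into an $L^{p}$ estimate for $p$ just below $q^{\star}$. The extra $\epsilon$ comes from the standard self-improvement of the $RH_{q}$ class. This reproduces exactly the range $p<q^{\star}+\epsilon$ that \cite{AB} obtains in the non-magnetic case.

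The main obstacle is the Caccioppoli--Gehring step. Gauge invariance forbids any pointwise hypothesis on $\textbf{a}$, so each occurrence of $\textbf{a}$ has to be reorganised into $L$, $V$ or $B$, and the commutator errors absorbed via \eqref{eq:star}. The two bounds in \eqref{eq:star} are tailored precisely so that $|B|$ and $|\nabla B|$ are dominated, on every cube, by $V$ and $V^{3/2}$ respectively; this is exactly what is needed to treat the magnetic Schr\"odinger operator as a controlled perturbation of $-\Delta+V$ at every local scale. Once that step is secured, the transfer of the arguments from \cite{AB,Be} to the present setting is essentially formal.
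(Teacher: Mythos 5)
Your outline gets the broad landmarks right (the $p\le 2$ range is already known, the content is $2<p<q^{\star}+\epsilon$, and the heart of the matter is a local self-improving estimate for $|Lu|$ when $Hu=0$), but it has two substantive gaps that would stop the argument as you have sketched it.

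\textbf{1. Gehring does not give the right exponent, and your reverse-H\"older inequality is stated incorrectly.} The Caccioppoli--Sobolev--Gehring route only produces \emph{some} $s>2$ depending on the Caccioppoli constant; it is a qualitative self-improvement and does not deliver the specific exponent $s=q^{\star}$ that the stated range $p<q^{\star}+\epsilon$ requires. What the paper actually proves (Propositions \ref{lemm:5'} and \ref{lemm:6'}) is a reverse-H\"older estimate at exponent exactly $q^{\star}$, and this is obtained not by Gehring but by comparing $u$ with the harmonic replacement $v$ solving $\Delta v=0$ in $2Q$, $v=u$ on $\partial(2Q)$, estimating $w=u-v$ through the Newtonian potential $\Gamma_{0}$ applied to the rewritten equation
\[
\Delta u=-\tfrac1i(\nabla\cdot\textbf{h})u-2\,\textbf{h}\cdot Lu+(-|\textbf{h}|^{2}+V)u,
\]
and then iterating, with Hardy--Littlewood--Sobolev producing the jump from $q$ to $q^{\star}$. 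It is precisely here, in bounding the $(\nabla\cdot\textbf{h})u$ term, that the hypothesis on $\nabla B$ enters via \eqref{jaugebes''} — not by differentiating the equation as you suggest. Moreover, the clean inequality you write, with only $|Lu|^{2}$ on the right, is false for $q<n/2$: the correct form (Proposition \ref{lemm:5'}) has the extra $V|u|^{2}$ term on the right, and discarding it would make the subsequent Shen-criterion argument incoherent in that range.

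\textbf{2. A Calder\'on--Zygmund decomposition of $f$ is the wrong tool for $p>2$, and you are missing the reduction through Theorem \ref{th:1b'}.} The local reverse-H\"older estimate applies to weak solutions of $Hu=0$, but $u=H^{-1/2}f$ is not a weak solution of anything away from $\operatorname{supp} f$, so one cannot feed it directly into a good-$\lambda$/CZ scheme for $LH^{-1/2}$. The paper instead proves $L^{p}$ bounds for the \emph{second-order} operators $LH^{-1}L^{\star}$ and $LH^{-1}V^{1/2}$ (Proposition \ref{thD'}), which do see weak solutions: for $\textbf{F}$ supported away from $4Q$, $u=H^{-1}L^{\star}\textbf{F}$ solves $Hu=0$ in $4Q$, and Propositions \ref{lemm:5'}/\ref{lemm:6'} then feed the Auscher--Martell/Shen criterion (Theorem \ref{theor:shen}). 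To pass from these second-order bounds back to $LH^{-1/2}$ one dualizes and uses the reverse inequality $\|H^{1/2}g\|_{p'}\lesssim\|Lg\|_{p'}+\|V^{1/2}g\|_{p'}$, i.e.\ Theorem \ref{th:1b'} — which is where the paper's Calder\'on--Zygmund decomposition (Lemma \ref{CZ'}, adapted to $M(|Lf|^{p}+|V^{1/2}f|^{p})$, not to $M(|H^{1/2}f|)$) actually lives, and it is used for $p<2$, not $p>2$. Your proposal collapses these two distinct ingredients into one and omits the reverse estimate altogether, leaving no way to relate $LH^{-1/2}$ to local solutions of $Hu=0$.
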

 \begin{rem}\begin{enumerate}
\item Note that condition \eqref{eq:star} implies the following inequalities:
 \begin{equation}\label{eq:star'}
 \left\{\begin{array}{ll}
  |B| \leq C  V, \\  | \nabla B | \leq C   V^{3/2},
  \end{array}
 \right.
 \end{equation} 
 almost everywhere in $ \mathbb{R}^n$. These  hypotheses are not sufficient to obtain \eqref{eq:Lp}.
\item Condition \eqref{eq:star} includes the polynomial case.
\end{enumerate}
 \end{rem}
  
 An important step to prove the previous result is to establish the following reverse estimates that hold an importance of their own :
\begin{theo}\label{th:1b'}Suppose $\textbf{a}\in L^{2}_{loc}(\mathbb{R}^{n})^{n}$, $V\in A_{\infty}$. Also assume that there exists a constant $C>0$ such that for any cube  $Q$ in $\mathbb{R}^n$ :
\begin{equation}\label{eq:starstar}  \sup_{Q} |B| \leq C \avert {Q} V.
\end{equation}
Then, for any $1\leq p<\infty$, there exists $C_{p}>0$,
which depends on \eqref{eq:starstar} and $V$,
such that
\begin{equation}\label{eq:revLp}
\|H(\textbf{a},V)^{\frac{1}{2}}(f)\|_{p}\leq C_{p}\big{(}\| Lf\|_{p}+ \| V^{\frac{1}{2}}\,f\|_{p}\big{)} ,
\end{equation} 
 for any $f\in C_{0}^{\infty}(\mathbb{R}^{n})$, if $p>1$,\\ and
\begin{equation}\label{eq:wt}|\{x\in \mathbb{R}^n\, ; \,  |H(\textbf{a},V)^{\frac{1}{2}} f(x)| > \alpha\}| \le \frac{C_{1}}{\alpha} \int |L f| + V^{\frac{1}{2}}|f|,
\end{equation}
for any $\alpha>0$ and $f\in C_{0}^{\infty}(\mathbb{R}^{n}),$ if $p=1$.
 \end{theo}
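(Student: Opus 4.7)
The case $p = 2$ follows directly from \eqref{LL2}. The plan is to establish the remaining ranges by two essentially independent routes: for $p > 2$ by a duality argument using the $L^p$-boundedness of the Riesz-type operators in the conjugate range, and for $1 \le p < 2$ by first proving the weak-$(1,1)$ bound \eqref{eq:wt} and then interpolating (Marcinkiewicz) with the $L^2$ identity.

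\textbf{Duality for $p > 2$.} For $f \in C_0^\infty$ and $g \in L^{p'} \cap L^2$ with $p' = p/(p-1) \in (1,2)$, the self-adjointness of $H$ together with the form characterization yield
$$\langle H^{1/2}f,\, g\rangle = \langle f,\, H^{1/2}g\rangle = \mathcal{Q}(f,\, H^{-1/2}g) = \langle Lf,\, LH^{-1/2}g\rangle + \langle V^{1/2}f,\, V^{1/2}H^{-1/2}g\rangle.$$
Since $p' \le 2$, the Riesz-type operators $LH^{-1/2}$ and $V^{1/2}H^{-1/2}$ are bounded on $L^{p'}$: this follows from the weak-$(1,1)$ bounds of Sikora \cite{Sik} and Duong--Ouhabaz--Yan \cite{DOY} interpolated with \eqref{LL2}. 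Hölder's inequality then gives $|\langle H^{1/2}f,g\rangle | \le C(\|Lf\|_p + \|V^{1/2}f\|_p)\|g\|_{p'}$, whence \eqref{eq:revLp} by duality.

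\textbf{Weak $(1,1)$ and main obstacle.} Given $\alpha > 0$, I would run a Calderón--Zygmund-type decomposition adapted to the scalar function $|Lf| + V^{1/2}|f|$ at height $\alpha$, producing a family of cubes $\{Q_i\}$ of sidelengths $r_i$ with $\sum_i |Q_i| \lesssim \alpha^{-1}(\|Lf\|_1 + \|V^{1/2}f\|_1)$ and a corresponding decomposition $f = g + \sum_i b_i$ with each $b_i$ supported near $Q_i$. For the good part, \eqref{LL2} combined with $\|g\|_\infty \lesssim \alpha$ gives $\|H^{1/2}g\|_2^2 = \|Lg\|_2^2 + \|V^{1/2}g\|_2^2 \lesssim \alpha(\|Lf\|_1 + \|V^{1/2}f\|_1)$, handling the good contribution via Chebyshev. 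For each bad piece I would use the splitting
$$H^{1/2} b_i = H^{1/2}(I - e^{-r_i^2 H})b_i + H^{1/2} e^{-r_i^2 H}b_i,$$
bounding the first term on a suitable dilate $Q_i^*$ (via the representation $H^{1/2}(I - e^{-tH}) = \int_0^t H^{3/2} e^{-sH}\,ds$ and local heat-kernel control), and the second term off $\bigcup_i Q_i^*$ via Gaussian off-diagonal decay of $H^{1/2}e^{-tH}$. The main obstacle is to obtain such heat-kernel and off-diagonal estimates for the magnetic semigroup with constants independent of $\mathbf{a}$: this is precisely where $V \in A_\infty$ and $\sup_Q |B| \le C\avert{Q} V$ enter, allowing (via the diamagnetic inequality together with commutator bounds controlled pointwise by $|B| \lesssim V$) the transfer of Gaussian-type estimates from $-\Delta + V$ to $H(\mathbf{a}, V)$, viewing the latter as a controlled perturbation of the former in the spirit announced in the introduction.
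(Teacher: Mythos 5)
Your treatment of $p=2$ (the form identity) and of $p>2$ (duality against $L^{p'}$-boundedness of $LH^{-1/2}$ and $V^{1/2}H^{-1/2}$, which holds for $p'\le 2$ by Sikora and Duong--Ouhabaz--Yan plus interpolation) is correct and coincides with the paper. The architecture of your $p<2$ argument --- a Calder\'on--Zygmund decomposition at the level of $|Lf|+V^{1/2}|f|$, a good part handled by $L^2$ and Chebyshev, and bad parts split as $H^{1/2}(I-e^{-r_i^2 H})b_i + H^{1/2}e^{-r_i^2 H}b_i$ with local and off-diagonal estimates --- is also the right template, and it is the one the paper imports from \cite{AB}.

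However, you have misidentified where the hypothesis $\sup_Q|B|\le C\avert{Q}V$ actually does its work, and as a result the key step of your proof has a genuine gap. The heat-kernel and off-diagonal estimates you flag as the ``main obstacle'' are in fact free: the Kato--Simon domination $|e^{-tH}f|\le e^{t\Delta}|f|$ (inequality \eqref{dom}) holds for any $\mathbf{a}\in L^2_{loc}$ and $V\ge0$ with no condition on $B$, and the Gaussian bounds on time-derivatives of the kernel follow by standard arguments (\cite{CD}, \cite{Da}, \cite{G}, \cite{Ou}). No ``transfer of Gaussian estimates from $-\Delta+V$ to $H(\mathbf{a},V)$'' controlled by $|B|\lesssim V$ is needed or used. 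The real difficulty, which your sketch does not solve, is the construction of the bad pieces $b_i$ themselves. In the classical scalar setting one sets $b_i=(f-\avert{Q_i}f)\chi_i$ so that cancellation gives $\|\nabla b_i\|_{L^p}\lesssim\alpha|Q_i|^{1/p}$; here one must achieve the analogue with $L=\tfrac1i\nabla-\mathbf{a}$, and subtracting a plain constant destroys the estimate because $L$ applied to a constant is $-\mathbf{a}$ times that constant, with no control on $\mathbf{a}$. The paper's Lemma \ref{CZ'} resolves this by splitting the Whitney cubes into two types according to whether $R_k^2\avert{Q_k}V>1$ (type $1$, where no cancellation is needed since $R_k^{-1}\lesssim(\avert{Q_k}V)^{1/2}$) or $\le 1$ (type $2$). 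On type $2$ cubes one invokes the local Iwatsuka gauge transform (Lemma \ref{th:jauge'}), writing $\mathbf{a}=\mathbf{h}_k+\nabla\phi_k$ on $2Q_k$ with $\sup_{2Q_k}|\mathbf{h}_k|\le CR_k\sup_{2Q_k}|B|\le CR_k\avert{2Q_k}V$, and defines $b_k=\big(f-e^{-i\phi_k}m_{2Q_k}(e^{i\phi_k}f)\big)\chi_k$; this gauge-twisted mean subtraction, combined with the bound on $\mathbf{h}_k$, is precisely where $\sup_Q|B|\le C\avert{Q}V$ enters (and only there, via \eqref{jaugecz'}). Without the gauge transform and the type-$1$/type-$2$ dichotomy, the decomposition you propose does not yield properties \eqref{eq:czc'} or the $L^\infty$ control on $Lg$, so the weak-$(1,1)$ estimate does not go through as sketched.
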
 
 
 Along with the study of Riesz transforms, we will also establish some maximal inequalities:  
  \begin{theo}\label{max}Suppose $\textbf{a}\in L^{2}_{loc}(\mathbb{R}^{n})^{n}$, $V\in L_{loc}^{1}(\mathbb{R}^n)$ and $V \in RH_{q}$, $1<q\le +\infty$. Also assume that there exists a constant $C>0$ such that for any cube $Q\subset \mathbb{R}^{n}$
 \begin{equation}
 \left\{\begin{array}{ll}
 \sup_{Q}| B |\leq C\avert {Q}V \\ \sup_{Q} |  \nabla B |\leq C(\avert {Q}V)^{3/2}.
  \end{array}
  \right.
 \end{equation}
 Then, there exists an $\epsilon>0$ depending on $V$, such that for every $s=1,\ldots.,n$ and $ k=1,\ldots,n$, and for any $1p<q+\epsilon$, there exists a constant $C_{p}>0$ such that for any $f\in C_{0}^{\infty}(\mathbb{R}^{n}),$
 \begin{equation}\| L_{s}L_{k}(f)\|_{q}\leq C_{q} \|H(\textbf{a},V) f\|_{q}.
 \end{equation}
 
\end{theo}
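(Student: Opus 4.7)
Setting $g=Hf$, the conclusion is equivalent to the $L^p$ boundedness of the second-order Riesz transform $T=L_sL_k H(\mathbf{a},V)^{-1}$, and at $p=2$ this is a direct consequence of \eqref{LL2} applied to $L_ku$. My plan is to implement the announced philosophy of treating $H$ as a perturbation of $-\Delta+V$ and to invoke the analogous magnetic-free bound $\|\partial_s\partial_k(-\Delta+V)^{-1}\|_{p\to p}<\infty$ already obtained in \cite{AB} under the same reverse H\"older hypothesis on $V$, combined with the first-order Riesz estimates of Theorems \ref{th:1a'} and \ref{th:1b'}.

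The first step is a local gauge reduction. On each cube $Q$, the Poincar\'e gauge $\phi_Q(x)=-\int_0^1\mathbf{a}(x_Q+t(x-x_Q))\cdot(x-x_Q)\,dt$ produces a new potential $\tilde{\mathbf{a}}=\mathbf{a}-\nabla\phi_Q$ with $|\tilde{\mathbf{a}}|\le C\,\ell(Q)\sup_Q|B|$ and $|\nabla\tilde{\mathbf{a}}|\le C(\sup_Q|B|+\ell(Q)\sup_Q|\nabla B|)$ on $Q$; by \eqref{eq:star} these are controlled by $\ell(Q)\avert{Q}V$ and $\avert{Q}V+\ell(Q)(\avert{Q}V)^{3/2}$ respectively. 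Setting $U=e^{-i\phi_Q}u$, the conjugated operator $\tilde H=e^{-i\phi_Q}He^{i\phi_Q}$ decomposes as $(-\Delta+V)+R_Q$, where $R_Q$ is a first-order operator whose coefficients are pointwise controlled by the quantities above, and gauge covariance transports $L_sL_k$ to $\tilde L_s\tilde L_k=\partial_s\partial_k+E_Q$ with $E_Q$ similarly bounded.

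This reduces the problem to a local second-order estimate for $u$ satisfying $Hu=g$ on $4Q$: apply the bound from \cite{AB} to $\partial_s\partial_kU$ on $Q$, and control the magnetic error terms by repeated use of Theorems \ref{th:1a'} and \ref{th:1b'} (to convert between $\|LU\|_p$ and $\|H^{1/2}U\|_p\simeq\|LU\|_p+\|V^{1/2}U\|_p$). The outcome I would aim for is a local reverse H\"older-type inequality
$$\Bigl(\avert{Q}|L_sL_ku|^p\Bigr)^{1/p}\le C\Bigl(\avert{4Q}|g|^p\Bigr)^{1/p}+C\Bigl(\avert{4Q}|L_sL_ku|^2\Bigr)^{1/2}.$$
Plugging this into an Auscher--Martell good-$\lambda$ machinery (as in \cite{AB}, \cite{Be}), using the $L^2$ boundedness of $T$ and the self-improvement of $V\in RH_q$, extrapolates the local estimate to a global $L^p$ bound for $1<p<q+\varepsilon$.

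The main obstacle is the local estimate itself, and more precisely the need to keep the perturbative remainder $R_Q$ at the correct order relative to the intrinsic critical scale $\ell(Q)(\avert{Q}V)^{1/2}\sim 1$. The hypothesis $\sup_Q|\nabla B|\le C(\avert{Q}V)^{3/2}$ is precisely what makes $|\nabla\tilde{\mathbf{a}}|$ admissible at this scale, and therefore what allows the bootstrap through the first-order Riesz estimates to close. Without this control on $\nabla B$ the gradient of the kernel of $T$ would fail to satisfy the regularity needed for the Calder\'on--Zygmund-style argument that propagates the $L^2$ bound to $L^p$.
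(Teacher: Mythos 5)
Your strategy is genuinely different from the paper's. The paper proves Theorem~\ref{max} by a purely operator-algebraic decomposition: it writes
\begin{equation*}
L_s L_k H^{-1} = L_s H^{-1} L_k + L_s [L_k, H^{-1}],
\qquad
[L_k, H^{-1}] = -H^{-1}[L_k,H]H^{-1},
\end{equation*}
splits $[L_k,H]=[L_k,H(\textbf{a},0)]+[L_k,V]$, uses $[L_k,H(\textbf{a},0)]=\sum_j b_{kj}L_j - \partial_j b_{kj}$, and then bounds every resulting factor by already-established operators: $L_s H^{-1}L_k$ via the first-order Riesz transforms of Theorem~\ref{th:1a'}, $VH^{-1}$ via Theorem~\ref{th:VH}, $V^{1/2}L_k H^{-1}$ via Theorem~\ref{vl}, and $L_s H^{-1}V^{1/2}$ via Proposition~\ref{thD'}, with the pointwise bounds $|B|\le CV$, $|\nabla B|\le CV^{3/2}$ absorbing the magnetic multipliers. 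There is no local gauge reduction and no local reverse H\"older estimate for $L_sL_k u$; the only local estimates the paper uses in this theorem are the ones already packaged into Theorems~\ref{th:1a'}, \ref{th:VH} and \ref{vl}. Your route instead freezes the gauge on each cube, perturbs off $-\Delta+V$, invokes the scalar second-order estimates from \cite{AB}, and tries to extrapolate via a good-$\lambda$ argument.

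The difficulty is that you never actually obtain the local inequality you announce, and the route to it is not filled in. After gauging, $U=e^{-i\phi_Q}u$ solves $(-\Delta+V)U = -R_Q U$ on $4Q$ (when $Hu=0$ there), so applying the \cite{AB} local $W^{2,p}$ bound requires you to control $R_Q U$ in $L^p$ on a slightly smaller cube, i.e.\ the terms $\tilde{\mathbf{a}}\cdot\nabla U$, $(\nabla\cdot\tilde{\mathbf{a}})U$, $|\tilde{\mathbf{a}}|^2 U$. Those are first- and zeroth-order in $U$ and, at the critical scale $R^2\avert{Q}V\sim 1$, of the same magnitude as $V^{1/2}|\nabla U|$ and $V|U|$ --- so this is a genuine perturbation that needs an iteration/absorption argument, not a small one. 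You acknowledge this is ``the main obstacle'' but do not carry it out. Moreover, the extrapolation criterion you would need is the Auscher--Martell ``good-$\lambda$ with right-hand side'' variant, not the version recorded in Theorem~\ref{theor:shen} (which assumes $f$ supported away from $\alpha_2 Q$), so that step also needs to be supplied. What the paper's algebraic decomposition buys is precisely that none of this is needed: once the first-order Riesz transforms and the maximal inequalities for $VH^{-1}$ and $V^{1/2}LH^{-1}$ are in hand, the second-order bound drops out in a few lines of commutator bookkeeping. Your approach could plausibly be completed, and would be closer in spirit to the weak-solution estimates of Section~\ref{sec:weaksol}, but as written the central local estimate is a conjecture rather than a lemma.
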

The proof of this theorem will use Theorem \ref{th:1a'} and \ref{th:1b'}. We also need to study the $L^p$ boundedness of different second order Riesz Transforms us $VH(\textbf{a},V)^{-1}$, $H(\textbf{a},0)H(\textbf{a},V)^{-1}$ (studied in section 5) and especially the behaviour of operator $V^{\frac{1}{2}}LH(\textbf{a},V)^{-1}$ described in the following theorem: 

\begin{theo}\label{vl}
Suppose $\textbf{a}\in L^{2}_{loc}(\mathbb{R}^{n})^{n}$ and $V  \in RH_{n/2}$. Also assume that there exists a constant $C>0$ such that for any cube $Q$ in $\mathbb{R}^{n}$:

 \begin{equation}
 \left\{\begin{array}{ll}
 \sup_{Q} |B| \leq C \avert {Q} V, \\ \sup_{Q} | \nabla B |\leq C  (\avert{Q} V)^{3/2}.
  \end{array}
 \right.
 \end{equation}.
Then the operator $V^{\frac{1}{2}} L H(\textbf{a},V)^{-1}$ is $L^p$ bounded for
$$
\begin{cases}
1\le p < \frac{2(q+\epsilon)n}{3n-2(q+\epsilon)} , \, \,  {\rm if} \  q<n
\\
1\le p < 2(q+\epsilon), \, \,  {\rm  if} \  q\ge n
\end{cases}
$$
where $\epsilon$ depends only on $V$.
\end{theo}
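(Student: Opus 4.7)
The plan is to factorize $V^{1/2} L H(\textbf{a},V)^{-1}$ as a composition of three operators whose $L^p$-mapping properties can be read off from Theorem \ref{th:1a'}, a Sobolev embedding coming from the diamagnetic inequality, and a weighted H\"older step based on the reverse H\"older condition. Concretely, I would write
$$V^{1/2} L H(\textbf{a},V)^{-1} \;=\; V^{1/2} \cdot \bigl(L H(\textbf{a},V)^{-1/2}\bigr) \cdot H(\textbf{a},V)^{-1/2}$$
and control each factor in the appropriate space.

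For the rightmost factor I would invoke the diamagnetic inequality: since $V\ge 0$, Kato's inequality yields $|e^{-tH(\textbf{a},V)}f|(x) \leq e^{t\Delta}|f|(x)$ pointwise, and subordinating in $t$ gives $|H(\textbf{a},V)^{-1/2}f|(x)\leq(-\Delta)^{-1/2}|f|(x)$. The classical Hardy--Littlewood--Sobolev inequality then provides $H(\textbf{a},V)^{-1/2}\colon L^p\to L^{p^{*}}$ for $1<p<n$, with $1/p^{*}=1/p-1/n$. For the middle factor, Theorem \ref{th:1a'} applied in $L^{p^{*}}$ gives $L H(\textbf{a},V)^{-1/2}\colon L^{p^{*}}\to L^{p^{*}}$, provided $p^{*}<(q+\epsilon)^{*}$, where $\epsilon$ is the self-improvement exponent of the reverse H\"older class of $V$.

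It then remains to bound the multiplication operator $V^{1/2}\colon L^{p^{*}}\to L^p$. Since $V\in RH_{n/2}$ self-improves to $V\in RH_{q+\epsilon}$ (with $q=n/2$), the weight $V^{1/2}$ satisfies a reverse H\"older inequality of exponent $2(q+\epsilon)$ on every cube. The natural H\"older relation $\tfrac{1}{p}=\tfrac{1}{p^{*}}+\tfrac{1}{2(q+\epsilon)}$ combined with $\tfrac{1}{p^{*}}=\tfrac{1}{p}-\tfrac{1}{n}$ produces $\tfrac{1}{p}=\tfrac{3}{2(q+\epsilon)}-\tfrac{1}{n}$, which is exactly the range $p<\tfrac{2(q+\epsilon)n}{3n-2(q+\epsilon)}$ announced when $q<n$. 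In the complementary range $q\ge n$, the Sobolev step is not needed, and a direct combination of Theorem \ref{th:1a'} with the weighted H\"older step yields $p<2(q+\epsilon)$.

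The main obstacle is making the multiplication step rigorous: a naive global H\"older inequality fails since $V^{1/2}$ is not in $L^{2(q+\epsilon)}(\mathbb{R}^n)$. The passage from the local reverse H\"older control of $V^{1/2}$ to a global multiplier bound must be carried out via a Whitney-type cube decomposition and summation, exploiting the doubling property of $V$ as an $A_\infty$ weight, together with Caccioppoli-type local estimates on $Lu$ (where $u=H(\textbf{a},V)^{-1}f$) in terms of local averages of $f$. These local estimates rely on the magnetic control hypothesis $\sup_Q|B|\leq C\,\avert{Q}V$ and $\sup_Q|\nabla B|\leq C\,(\avert{Q}V)^{3/2}$, which allow the magnetic operator to be handled as a perturbation of $-\Delta+V$, in the spirit of the Shen/Auscher--Ben Ali techniques of \cite{Sh4}, \cite{AB}, and \cite{Be}.
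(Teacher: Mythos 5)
Your factorization $V^{1/2} L H^{-1} = V^{1/2}\cdot(LH^{-1/2})\cdot H^{-1/2}$ has a gap that is structural, not merely technical. In the last step the multiplication operator $g\mapsto V^{1/2}g$ would have to map $L^{p^*}(\mathbb{R}^n)$ into $L^p(\mathbb{R}^n)$; with the Sobolev relation $\tfrac1{p^*}=\tfrac1p-\tfrac1n$ this forces, by H\"older, $V^{1/2}\in L^n(\mathbb{R}^n)$ globally, which does not follow from $V\in RH_{n/2}$ (reverse H\"older is a local self-improvement plus doubling, with no global integrability). You flag the failure yourself, but the proposed repair via Whitney cubes and Caccioppoli is not a minor patch: the only reason the local size of $V^{1/2}$ can be paired against $|Lu|$ is that $u=H^{-1}f$ solves $Hu=0$ on cubes away from $\operatorname{supp} f$, and this correlation between the potential and the solution is exactly what is lost once $T$ is split into three independent operators. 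There is also an arithmetic inconsistency: your relations $\tfrac1p=\tfrac1{p^*}+\tfrac1{2(q+\epsilon)}$ and $\tfrac1{p^*}=\tfrac1p-\tfrac1n$, taken together, give $\tfrac1{2(q+\epsilon)}=\tfrac1n$ --- a fixed constraint on $n$ and $q$, not a range of $p$. The exponent that actually enters is $(q+\epsilon)^*$, the Sobolev conjugate of the reverse H\"older exponent of $V$, and it arises from the local reverse H\"older inequality satisfied by $Lu$ for weak solutions of $Hu=0$ (Propositions~\ref{lemm:5'} and \ref{lemm:6'}), not from any global Sobolev mapping property of $H^{-1/2}$.

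The paper's architecture is genuinely different and both of its steps are essential. First, Proposition~\ref{pr} proves $\|V^{1/2}LH^{-1}f\|_1\lesssim\|f\|_1$ directly, by estimating $\int_{\mathbb{R}^n}V^{1/2}(x)\,|L_x\Gamma(x,y)|\,dx$ along a dyadic Whitney decomposition of $\mathbb{R}^n\setminus\{y\}$, using the Green-function bound $|\Gamma(x,y)|\lesssim|x-y|^{2-n}$, Caccioppoli-type decay (Lemma~\ref{lemm:1'}), and the summability Lemma~\ref{lem*}, which is exactly where $V\in RH_{n/2}$ is indispensable. Your factorization cannot produce this $L^1$ endpoint at all, since $H^{-1/2}\colon L^1\to L^{n/(n-1)}$ fails. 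Second, the range $p>1$ is reached by the good-$\lambda$ criterion of Theorem~\ref{theor:shen} applied to $T=V^{1/2}LH^{-1}$ with $p_0=1$ and $q_0=\tfrac{2qn}{3n-2q}$ (resp.\ $2q$ when $q\ge n$); hypothesis \eqref{T:shen} is verified on each cube $Q$ by combining the reverse H\"older inequality $\big(\avert{Q}|Lu|^{q^*}\big)^{1/q^*}\lesssim\big(\avert{\mu Q}|Lu|^2+V|u|^2\big)^{1/2}$ with $V^{1/2}\in RH_{2q}$ and the $A_\infty$ comparison between $L^\delta$-averages and $V^{1/2}$-weighted averages. You should abandon the operator factorization and adopt this local self-improvement scheme anchored at the $L^1$ endpoint.
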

Note that this result was proved by Shen when the magnetic potential is absent (see Theorem 4.13, \cite{Sh1} that can be recovered by \cite{AB} methods under the same hypotheses and for $n\geq 1$ instead of $n\geq 3$).\\

 We mention without proof that our results admit local versions, replacing $V\in RH_{q}$ by  $V\in RH_{q,loc}$ which is defined by the same conditions on cubes with sides less than 1. Then we get the corresponding results and estimates  for $H+1$ instead of $H$.  The results on operator domains are valid under local assumptions.

The arguments are based on local estimates. We briefly sketch the main tools :

1) An improved Fefferman-Phong inequality for $A_{\infty}$ potentials.

2) Criteria for proving  $L^p$ boundedness of operators in absence of kernels. 

3)  Mean value inequalities for  nonnegative subharmonic functions against $A_{\infty}$ weights.

4) Complex  interpolation, together with  $L^p$ boundedness of imaginary powers 
of $H(\textbf{a},V)$ for $1<p<\infty$. 

5) A Calder\'on-Zygmund decomposition adapted to level sets of the maximal function of $|L f| + |V^{1/2}f| $. 

6) A gauge transform adapted to the reverse H\"older conditions on the potentials.

7) Reverse H\"older inequalities involving $\L u$, $|B|^{1/2} u$ and $V^{1/2} u$ for weak solutions
of $H(\textbf{a},V) u=0$.

 The paper is organized as follows. In Section 2 we give the principal tools to prove the theorems mentioned above. We state an improved Fefferman-Phong inequality and we establish an adapted gauge transform. Section 3 is devoted to  establish some reverse estimates via a Calder\'on-Zygmund decomposition. In section 4 we give different estimates for the weak solution of $H(\textbf{a},V) u=0$. We state some useful maximal inequalities in section 5. Section 6 is concerned with the proof of Theorem \ref{th:1a'}. Finally, in section 7, we study the operator $V^{\frac{1}{2}}LH^{-1}$ and give the proof of Theorem \ref{max}.

\section{Preliminaries}

We begin by recalling some properties of the reverse H\"older classes.
 \begin{pro}\label{11.1}(Proposition 11.1 [AB]) Let $\omega$ be a nonnegative measurable function. Then the following are equivalent:
\begin{enumerate}
  \item $\omega\in A_{\infty}$.
  \item For all $s\in (0,1)$, $\omega^s\in B_{1/s}$.
  \item There exists $s\in (0,1)$, $\omega^s\in B_{1/s}$.
\end{enumerate}

\end{pro}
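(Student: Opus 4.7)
The plan is to run the cycle $(1) \Rightarrow (2) \Rightarrow (3) \Rightarrow (1)$; the implication $(2) \Rightarrow (3)$ is immediate. Interpreting ``$\omega^s \in B_{1/s}$'' as the reverse--H\"older--type inequality $(\avert{Q} \omega)^s \leq C \avert{Q} \omega^s$, the proposition is essentially the classical characterization of $A_\infty$ weights by a reverse Jensen inequality, and the proof will be built on two standard tools: Hru\v{s}\v{c}ev's exponential characterization of $A_\infty$ and Jensen's inequality used in both directions.

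For $(1) \Rightarrow (2)$, I would appeal to Hru\v{s}\v{c}ev's criterion: $\omega \in A_\infty$ if and only if there is $C>0$ with $\avert{Q} \omega \leq C \exp(\avert{Q} \log \omega)$ for every cube $Q$. Raising to the $s$-th power for $s \in (0,1)$ and applying Jensen's inequality $\exp(\avert{Q} f) \leq \avert{Q} e^f$ to $f = s\log \omega$ yields
\[
(\avert{Q} \omega)^s \leq C^s \exp\!\left(\avert{Q} \log \omega^s\right) \leq C^s \avert{Q} \omega^s,
\]
which is exactly condition~(2).

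For $(3) \Rightarrow (1)$, starting from $(\avert{Q}\omega)^s \leq C\avert{Q} \omega^s$ for some fixed $s \in (0,1)$, I would verify the standard $A_\infty$ criterion that $|E|/|Q|$ small forces $\omega(E)/\omega(Q)$ bounded away from $1$. For a measurable $E \subset Q$, split
\[
\avert{Q} \omega^s \;=\; \frac{|E|}{|Q|}\, \avert{E} \omega^s + \frac{|Q\setminus E|}{|Q|}\, \avert{Q\setminus E} \omega^s,
\]
and apply (forward) Jensen to each piece, using the concavity of $x\mapsto x^s$ for $s\in(0,1)$, to get $\avert{E}\omega^s \leq (\avert{E}\omega)^s$ and the analogous estimate on $Q\setminus E$. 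Feeding these back into the reverse Jensen hypothesis yields the scalar inequality
\[
c \;\leq\; \alpha^{1-s}\beta^s + (1-\alpha)^{1-s}(1-\beta)^s,
\]
where $\alpha := |E|/|Q|$ and $\beta := \omega(E)/\omega(Q)$. A short continuity argument on the right--hand side (note that as $\alpha\to 0$ the first term vanishes and the inequality forces $1-\beta \geq c^{1/s}$) produces thresholds $\alpha_0, \eta > 0$ with $\alpha \leq \alpha_0 \Rightarrow \beta \leq 1-\eta$, which is the standard $A_\infty$ condition.

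The main obstacle is the scalar inequality analysis in the last step: extracting quantitative $A_\infty$ from $c \leq \alpha^{1-s}\beta^s + (1-\alpha)^{1-s}(1-\beta)^s$ in a uniform way. Everything else is essentially bookkeeping with Jensen's inequality together with classical $A_\infty$ theory (Hru\v{s}\v{c}ev/Garc\'ia--Cuerva--Rubio de Francia).
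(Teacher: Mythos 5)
The paper does not prove this proposition; it cites it as Proposition~11.1 of \cite{AB} without reproducing the argument, so there is no internal proof to compare against. Judged on its own, your proof is correct. The interpretation of $\omega^s\in B_{1/s}$ as the reverse Jensen inequality $(\avert{Q}\omega)^s\le C\avert{Q}\omega^s$ is the right one, the $(1)\Rightarrow(2)$ step via Hru\v{s}\v{c}ev plus Jensen is exactly right, and the $(3)\Rightarrow(1)$ step via the decomposition of $\avert{Q}\omega^s$ over $E$ and $Q\setminus E$ does produce the scalar inequality $c\le\alpha^{1-s}\beta^s+(1-\alpha)^{1-s}(1-\beta)^s$. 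The ``main obstacle'' you flag at the end is in fact harmless: since $\beta\le 1$ and $1-\alpha\le 1$ one has $c\le\alpha^{1-s}+(1-\beta)^s$, so choosing $\alpha_0$ with $\alpha_0^{1-s}\le c/2$ forces $\beta\le 1-(c/2)^{1/s}$ for $|E|\le\alpha_0|Q|$, which is the standard absolute-continuity form of $A_\infty$.

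It is worth noting, though, that there is a shorter route for $(3)\Rightarrow(1)$ which sits more naturally next to the Remark that follows the proposition in this paper (self-improvement of $RH$ and the Coifman--Fefferman characterization $A_\infty=\bigcup_{q>1}RH_q$). From $\omega^s\in RH_{1/s}$, self-improvement gives $\omega^s\in RH_{1/s+\varepsilon}$ for some $\varepsilon>0$; raising to the power $1/s$ and using Jensen in the form $(\avert{Q}\omega^s)^{1/s}\le\avert{Q}\omega$ yields
\[
\bigl(\avert{Q}\omega^{1+s\varepsilon}\bigr)^{\frac{1}{1+s\varepsilon}}\le C^{1/s}\bigl(\avert{Q}\omega^s\bigr)^{1/s}\le C^{1/s}\avert{Q}\omega,
\]
so $\omega\in RH_{1+s\varepsilon}\subset A_\infty$. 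This avoids the two-variable scalar inequality entirely. Your argument is nonetheless complete and uses only classical, well-referenced facts about $A_\infty$ weights.
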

\begin{rem}It is well known that if $\omega \in RH_q$ and $q<+\infty$, then $\omega \in RH_{p}$ for all $1<p<q$  and there exists an $\varepsilon >0$ such that $\omega\in RH_{q+\varepsilon}$.\\
 We also know that $\omega \in A_{\infty}$ if and only if there exists $q>1$ such that $\omega\in RH_{q}$.\\ Here $A_{\infty}$ is the Muckenhoupt weight class, defined as the union of all $A_p$, $1\leq p<\infty.$
 If $\omega\in A_{\infty}$ then $\omega(x) dx$ is a doubling measure (see  \cite{St},chap V for more details).
\end{rem}

The first step of this work is to use the properties of the $A_{\infty}$ weights to establish some
reverse H\"older inequalities with the weak solutions. Then, we apply the following  criterion for $L^p$ boundedness \cite{AM1}( A slightly weaker version appears in Shen \cite{Sh2}).
 \begin{theo} \label{theor:shen}
 
 Let $1\le p_{0} <q_0\le \infty$. Suppose that $T$ is a bounded
sublinear operator  on $L^{p_{0}}(\mathbb{R}^n)$. Assume that there exist
constants $\alpha_{2}>\alpha_{1}>1$, $C>0$ such that
\begin{equation}\label{T:shen}
\big(\avert {Q} |Tf|^{q_0}\big)^{\frac1{q_0}}
\le
C\, \bigg\{ \big(\avert {\alpha_{1}\, Q}
|Tf|^{p_0}\big)^{\frac1{p_0}} +
(S|f|)(x)\bigg\},
\end{equation}
for all cube $Q$, $x\in Q$ and  all $f\in L^{\infty}_{comp}(\mathbb{R}^n)$ with 
support in $\mathbb{R}^n\setminus \alpha_{2}\, Q$, where $S$ is a positive operator.
Let $p_{0}<p<q_{0}$. If $S$ is bounded on $L^p(\mathbb{R}^n)$, then, there is a constant $C$ such that
$$
\|T f\|_{p}
\le
C\, \|f\|_{p}
$$
for all    $f\in L_{comp}^{\infty}(\mathbb{R}^n)$.\end{theo}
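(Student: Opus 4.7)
The plan is to deduce Theorem~\ref{theor:shen} from a good-$\lambda$ inequality that compares a maximal function of $Tf$ to $S|f|$. Write $M_{p_0}g := M(|g|^{p_0})^{1/p_0}$, where $M$ is the Hardy--Littlewood maximal operator. The target is
\begin{equation*}
|\{x : M_{p_0}(Tf)(x) > K\lambda,\ S|f|(x)+M_{p_0}(f)(x)\leq\gamma\lambda\}| \leq \eta(K,\gamma)\,|\{x:M_{p_0}(Tf)(x)>\lambda\}|
\end{equation*}
for all $\lambda>0$, with constants $K>1$ large and $\gamma>0$ small to be chosen so that $\eta(K,\gamma)K^p<1$ for the given $p_0<p<q_0$. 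Integrating against $\lambda^{p-1}d\lambda$, together with the $L^p$-boundedness of $M_{p_0}$ (valid for $p>p_0$) and of $S$ (by hypothesis), then yields $\|Tf\|_p\leq\|M_{p_0}(Tf)\|_p\leq C_p\|f\|_p$. A preliminary truncation of $f$ ensures $\|M_{p_0}(Tf)\|_p<\infty$ before the iteration starts.

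For the good-$\lambda$ inequality, decompose the open set $E_\lambda=\{M_{p_0}(Tf)>\lambda\}$ (finite measure since $Tf\in L^{p_0}$) by a Whitney family $\{Q_j\}$ with $5Q_j\cap E_\lambda^c\neq\emptyset$, and pick $z_j\in 5Q_j\setminus E_\lambda$, so that $(\avert{R}|Tf|^{p_0})^{1/p_0}\leq\lambda$ for every cube $R\ni z_j$. Fix $Q=Q_j$ and assume the existence of a point $x_0\in Q$ with $S|f|(x_0)+M_{p_0}(f)(x_0)\leq\gamma\lambda$ (otherwise the portion of the conditioned good-$\lambda$ set inside $Q$ is empty). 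Splitting $f=f_1+f_2$ with $f_1=f\chi_{2\alpha_2 Q}$, the $L^{p_0}$-boundedness of $T$ and the choice of $x_0$ give
\begin{equation*}
\int_Q|Tf_1|^{p_0}\leq C\int_{2\alpha_2 Q}|f|^{p_0}\leq C|Q|(\gamma\lambda)^{p_0},
\end{equation*}
while the hypothesis \eqref{T:shen} applied to $f_2$ on $Q$ at $x_0$, together with $S|f_2|\leq S|f|$, the sublinearity bound $|Tf_2|^{p_0}\lesssim|Tf|^{p_0}+|Tf_1|^{p_0}$, and the Whitney bound $(\avert{\alpha_1 Q}|Tf|^{p_0})^{1/p_0}\leq\lambda$, yields $(\avert{Q}|Tf_2|^{q_0})^{1/q_0}\leq C\lambda(1+\gamma)$. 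Chebyshev's inequality in $L^{p_0}$ for $Tf_1$ and in $L^{q_0}$ for $Tf_2$ gives
\begin{equation*}
|\{x\in Q:|Tf(x)|>K\lambda\}|\leq C\bigl[\gamma^{p_0}K^{-p_0}+(1+\gamma)^{q_0}K^{-q_0}\bigr]|Q|,
\end{equation*}
and the standard observation that contributions of cubes not contained in $2Q$ to $M_{p_0}(Tf)$ at a point of $Q$ are bounded by $C\lambda$ (hence below $K\lambda/2$ for $K$ large) lets one replace $|Tf|$ by $M_{p_0}(Tf)$ on the left. Summing over $Q_j$ gives $\eta=C\bigl[\gamma^{p_0}K^{-p_0}+(1+\gamma)^{q_0}K^{-q_0}\bigr]$.

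The main obstacle is the $K^{-p_0}$ decay produced by the local piece $Tf_1$: since $T$ is only $L^{p_0}$-bounded, Chebyshev at height $K\lambda$ cannot do better than $K^{-p_0}$, and a naive good-$\lambda$ iteration with only this decay would close only at $p=p_0$. The remedy is to gain the extra factor $\gamma^{p_0}$ from the conditioning event $M_{p_0}(f)(x_0)\leq\gamma\lambda$, which forces the conditioning to involve $M_{p_0}(f)$ on top of $S|f|$. Balancing (say $\gamma=K^{-1}$) yields $\eta\leq CK^{-2p_0}+CK^{-q_0}$, which falls below $K^{-p}$ for any fixed $p<q_0$ once $K$ is taken sufficiently large; conditioning on $M_{p_0}(f)$ costs nothing in the final bound because $M_{p_0}$ is $L^p$-bounded for $p>p_0$. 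The delicate point is thus the interplay between the two exponents $p_0$ and $q_0$ appearing in \eqref{T:shen}, which forces the extra $M_{p_0}(f)$ conditioning and determines the range of $p$ for which the conclusion holds.
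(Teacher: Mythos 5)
The paper does not prove this theorem; it is quoted from Auscher--Martell \cite{AM1}, with a precursor attributed to Shen \cite{Sh2}. Your good-$\lambda$ strategy is the standard route, and most of its structure --- the Whitney decomposition of $\{M_{p_0}(Tf)>\lambda\}$, the split $f=f_1+f_2$ at scale $2\alpha_2 Q_j$, the additional conditioning on $M_{p_0}(f)$ to extract the factor $\gamma^{p_0}$ from the local piece $Tf_1$, and the two-height Chebyshev estimate --- is sound. However, the concluding balance does not close on the stated range. With $\gamma=K^{-1}$ one has $\eta K^{p}\le C K^{p-2p_0}+C K^{p-q_0}$; the first term diverges as $K\to\infty$ whenever $p\ge 2p_0$, and since $q_0$ may well exceed $2p_0$ (take $p_0=1$, $q_0=4$) the argument as written fails on $[2p_0,q_0)$. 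The repair is to decouple the two parameters: first fix $K$ so large that $CK^{p-q_0}<\tfrac12$, then choose $\gamma$ depending on $K$ and $p$ (for instance $\gamma=K^{-M}$ with $M>p/p_0-1$) so that $C\gamma^{p_0}K^{p-p_0}<\tfrac12$; this only costs a harmless factor $\gamma^{-p}$ in front of $\|S|f|+M_{p_0}f\|_p^p$ in the final estimate.

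Two further steps need more care than they receive. The passage from $\{|Tf|>K\lambda\}$ to $\{M_{p_0}(Tf)>K\lambda\}$ on a Whitney cube $Q_j$ should be carried out by splitting the supremum in $M_{p_0}$ over cubes $R\ni x$ according to whether $R\subset cQ_j$ or not: for the large cubes, $R$ together with a bounded dilate contains the point $z_j\in E_\lambda^c$, whence $(\avert{R}|Tf|^{p_0})^{1/p_0}\le C\lambda$; for the small cubes one applies the weak $(1,1)$ bound for $M$ to $|Tf_1|^{p_0}\chi_{cQ_j}$ and to $|Tf_2|^{q_0}\chi_{cQ_j}$, which reproduces the same $\gamma^{p_0}K^{-p_0}+K^{-q_0}$ decay. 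Finally, truncating $f$ does not secure the a priori finiteness of $\|M_{p_0}(Tf)\|_p$: $f$ is already in $L^{\infty}_{comp}$, and $Tf\in L^{p_0}$ only gives $M_{p_0}(Tf)$ in weak $L^{p_0}$, not in $L^p$. The standard device is to truncate the $\lambda$-integral at a finite level $N$ (finite by the weak $L^{p_0}$ bound), run the good-$\lambda$ absorption on that truncated integral, and let $N\to\infty$.
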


Fix an open set $\Omega$ and $f\in L^{\infty}_{comp}(\mathbb{R}^{n}) $, the space of compactly supported bounded functions on $\mathbb{R}^n$. By a weak solution of 
  \begin{equation}\label{12}
H(\textbf{a},V) u =f\,\, \textrm{in}\,\Omega,
\end{equation}
we mean $u\in W(\Omega)$, with $$W(\Omega)=\{u\in L^1_{loc}(\Omega)\, ;\,     V^{1/2}u \, and \, L_{k} u \in L^2_{loc}(\Omega) \, \forall k=1,\ldots,n\}$$ and the equation \eqref{12} holds in the sense of distribution on $\Omega$. We note that if $u\in W(\Omega) $, then by Poincar\'e and the diamagnetic inequalities, $u \in L^2_{loc}(\Omega)$. 

The weak solution satisfies some important inequalities which will be useful to prove our results:

 \begin{lemm}\textbf{Caccioppoli type inequality}\\
Let $u$ a weak solution of $H(\textbf{a},V)u=f$ in $2Q$, where $Q$ is a cube of $\mathbb{R}^n$ and $f\in L^{\infty}_{comp}(\mathbb{R}^{n})$. Then
 \begin{equation}\label{eq:cc} \int_{Q} |Lu|^{2}+V|u|^{2}  \leq C\{ \int_{2Q} |f||u|  + \frac{1}{R^{2}} \int_{2Q}  |u|^{2} \}.
 \end{equation}

We also give some important tools:
 \end{lemm}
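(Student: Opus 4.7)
The plan is the standard Caccioppoli argument adapted to the magnetic covariant derivative. I would fix a cutoff function $\eta \in C_0^\infty(2Q)$ with $\eta \equiv 1$ on $Q$, $0 \le \eta \le 1$, and $|\nabla \eta|\le C/R$, where $R$ denotes the sidelength of $Q$. The strategy is to test the weak formulation of $H(\mathbf{a},V)u = f$ against the function $\eta^2 \bar u$, absorb the resulting cross term via Cauchy--Schwarz, and use $\eta \equiv 1$ on $Q$ to recover the left-hand side.

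The key algebraic observation is the magnetic Leibniz rule
\begin{equation*}
L_k(\eta^2 u) \;=\; \eta^2 L_k u + \tfrac{2}{i}\,\eta\,(\partial_k \eta)\, u,
\end{equation*}
valid because $\eta$ is real. Using $\overline{1/i}=-1/i$ and summing over $k$, the weak form $\mathcal{Q}(u,\eta^2 u)=\int f\,\overline{\eta^2 u}$ becomes
\begin{equation*}
\int \eta^2 |Lu|^2 \;+\; \int V\eta^2 |u|^2 \;+\; 2i\sum_k \int \eta\,(\partial_k \eta)\,L_k u\,\bar u \;=\; \int f\,\eta^2\,\bar u.
\end{equation*}
Taking real parts (the first two terms are already real) and estimating the cross term by
\begin{equation*}
2\,\eta\,|\nabla \eta|\,|Lu|\,|u| \;\le\; \tfrac12\,\eta^2|Lu|^2 + 2\,|\nabla \eta|^2\,|u|^2,
\end{equation*}
one absorbs half of $\int \eta^2 |Lu|^2$ into the left-hand side. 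Bounding the right-hand integrand by $|f||u|$ and inserting $|\nabla \eta|\le C/R$, $\eta\equiv 1$ on $Q$, yields the claimed inequality.

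The only (mild) technical point is the admissibility of $\eta^2 u$ as a test function, since $u$ is only assumed to lie in $W(2Q)$. However, $\eta$ has compact support in $2Q$, so $\eta^2 u$ is compactly supported in $2Q$ with $V^{1/2}(\eta^2 u),\; L_k(\eta^2 u)\in L^2(\mathbb{R}^n)$; a standard mollification approximates it in the $\|\cdot\|_{\dot{\mathcal V}}$ seminorm by $C_0^\infty(2Q)$ functions, through which one passes to the limit. The computation itself is routine once this justification is in place, and I expect no deeper obstacle.
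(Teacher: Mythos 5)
The paper states this Caccioppoli-type inequality without any proof, so there is no source argument to compare against; your argument is the standard one and it is correct. The magnetic Leibniz rule $L_k(\eta^2 u) = \eta^2 L_k u + \tfrac{2}{i}\eta(\partial_k\eta)u$, the sign bookkeeping giving the $+2i\sum_k\int\eta(\partial_k\eta)L_k u\,\bar u$ cross term, the Young absorption, and the density justification for testing against $\eta^2 u$ (which indeed lies in $L^2$ together with $V^{1/2}\eta^2 u$ and $L_k(\eta^2 u)$, since $u\in L^2_{loc}(2Q)$ by the Poincar\'e and diamagnetic inequalities as the paper remarks) are all exactly what is needed.
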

 \begin{pro}\textbf{Diamagnetic inequality}\cite{LL}\\
For all $u\in W^{1,2}_{\textbf{a}}(\mathbb{R}^{n})$, with $$W^{1,2}_{\textbf{a}}(\mathbb{R}^{n})=\{u\in L^{2}(\mathbb{R}^n),\,\, L_{k} u \in L^{2}(\mathbb{R}^{n}),\,\, k=1\cdots , n\},$$
we have
\begin{equation}\label{2diam}|\nabla(|u|)|\leq| L(u)|.
\end{equation}

\end{pro}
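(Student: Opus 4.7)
The plan is to follow the classical Lieb--Loss argument: prove the inequality for a smooth approximation $u_\epsilon := \sqrt{|u|^2 + \epsilon^2}$, then pass to the limit $\epsilon \to 0$. Direct differentiation, using $\partial_j |u|^2 = 2\Re(\bar u\,\partial_j u)$, gives
\begin{equation*}
\partial_j u_\epsilon = \frac{\Re(\bar u\,\partial_j u)}{u_\epsilon}.
\end{equation*}

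The algebraic heart of the matter is that, because $a_j$ is real-valued and $\Re(i|u|^2)=0$, one has the identity $\Re\bigl(\bar u\,(\partial_j u - i a_j u)\bigr) = \Re(\bar u\,\partial_j u)$, so the magnetic contribution drops out of the real part. Combining this with the trivial bound $|z|\ge|\Re z|$ applied to $z = \bar u(\partial_j u - ia_j u)$, the elementary identity $|L_j u| = |\partial_j u - ia_j u|$ (which follows from $L_j u = \tfrac{1}{i}\partial_j u - a_j u$), and $|u|\le u_\epsilon$, I obtain
\begin{equation*}
u_\epsilon\, |L_j u| \;\ge\; |u|\,|L_j u| \;=\; \bigl|\bar u\,(\partial_j u - ia_j u)\bigr| \;\ge\; \bigl|\Re(\bar u\,\partial_j u)\bigr|.
\end{equation*}
Dividing by $u_\epsilon > 0$ and squaring yields $(\partial_j u_\epsilon)^2 \le |L_j u|^2$ almost everywhere, and summing in $j$ gives $|\nabla u_\epsilon| \le |Lu|$ almost everywhere.

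To conclude, I would send $\epsilon \to 0^+$. One has $u_\epsilon \to |u|$ pointwise and in $L^2_{loc}$, and the family $\{\nabla u_\epsilon\}_{\epsilon>0}$ is uniformly bounded in $L^2$ by $|Lu|\in L^2$. A subsequence therefore converges weakly in $L^2_{loc}$ to some vector field, and lower semicontinuity of the $L^2$ norm under weak convergence transfers the bound $|\nabla u_\epsilon|\le |Lu|$ to the limit, giving \eqref{2diam}. The one delicate point, which is the main obstacle, is identifying this weak limit with the distributional gradient $\nabla|u|$; this is a standard consequence of testing against $C_c^\infty$ functions and passing to the limit using the pointwise convergence of $u_\epsilon$ and the uniform $L^2$ bound on $\nabla u_\epsilon$, but it is the only step requiring functional-analytic care beyond the purely pointwise computation above.
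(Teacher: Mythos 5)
The paper does not supply its own proof of this proposition; it simply cites Lieb--Loss [LL]. Your argument is a correct reconstruction of precisely that classical proof (the $u_\epsilon=\sqrt{|u|^2+\epsilon^2}$ regularization, the real-part cancellation of the magnetic term because $a_j$ is real, the bound $|\Re z|\le |z|$, and passage to the limit via the uniform $L^2$ bound on $\nabla u_\epsilon$), so it matches the referenced approach.
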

\begin{pro}\textbf{Kato-Simon inequality}:
 \begin{equation}\label{eq:KS}| (H(\textbf{a},V)+\lambda)^{-1} f| \leq (-\Delta+ \lambda)^{-1}|f|; \qquad \forall f\in L^{2}(\mathbb{R}^n),\,\forall \lambda >0.
 \end{equation}
We also have the following domination inequality \cite{Si4}:
 \begin{equation}\label{dom} | e^{-tH(\textbf{a},V)} f| \leq e^{\Delta t}|f|; \qquad \forall t\geq 0, \, f\in L^{2}(\mathbb{R}^n).
 \end{equation}
\end{pro}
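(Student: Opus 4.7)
The plan is to establish the semigroup domination \eqref{dom} first and then derive the resolvent inequality \eqref{eq:KS} from it by a Laplace transform. The heart of the argument is Kato's distributional inequality adapted to the magnetic Laplacian: for any $u\in L^{2}_{loc}$ with $L_{j}u\in L^{2}_{loc}$ and $\sum_{j} L_{j}^{2}u\in L^{1}_{loc}$,
\begin{equation*}
\Delta |u| \,\geq\, -\,\mathrm{Re}\Bigl(\,\overline{\mathrm{sgn}(u)}\sum_{j=1}^{n} L_{j}^{2} u\Bigr) \qquad\text{in }\mathcal{D}'(\mathbb{R}^{n}),
\end{equation*}
with $\mathrm{sgn}(u):= u/|u|$ on $\{u\neq 0\}$ and $0$ elsewhere. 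I would prove this by the standard regularization $|u|_{\epsilon}:=(|u|^{2}+\epsilon)^{1/2}$: using $\partial_{j}=iL_{j}+ia_{j}$ one gets $\partial_{j}|u|_{\epsilon}=-\mathrm{Im}(\bar u\, L_{j}u)/|u|_{\epsilon}$, hence the pointwise diamagnetic bound $|\nabla|u|_{\epsilon}|\leq|Lu|$; a second differentiation then yields $\Delta|u|_{\epsilon}\geq-\mathrm{Re}\bigl((\bar u/|u|_{\epsilon})\sum_{j}L_{j}^{2}u\bigr)$, and passage to $\epsilon\downarrow 0$ gives the claim.

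To deduce \eqref{dom}, fix $f\in L^{2}$ and set $u(t,\cdot):=e^{-tH(\textbf{a},V)}f$, which satisfies $\partial_{t}u+H(\textbf{a},V)u=0$, so $\sum_{j}L_{j}^{2}u=-\partial_{t}u-Vu$. Plugging into the magnetic Kato inequality and using $\partial_{t}|u|=\mathrm{Re}(\overline{\mathrm{sgn}(u)}\partial_{t}u)$ yields
\begin{equation*}
(\partial_{t}-\Delta+V)|u|\,\leq\, 0 \qquad\text{in }\mathcal{D}'((0,\infty)\times\mathbb{R}^{n}).
\end{equation*}
Let $v(t):=e^{-t(-\Delta+V)}|f|\geq 0$; then $(\partial_{t}-\Delta+V)v=0$ and $v(0)=|f|=|u(0)|$. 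Since $e^{-s(-\Delta+V)}$ is positivity preserving (Beurling--Deny), a Duhamel argument applied to $v-|u|$ gives $|u(t)|\leq v(t)$. A second application of the same comparison with potential removed (equivalently, the Feynman--Kac formula together with $V\geq 0$) gives $v(t)\leq e^{t\Delta}|f|$, proving \eqref{dom}.

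The resolvent bound \eqref{eq:KS} then follows from the Laplace representation, valid for any nonnegative self-adjoint operator,
\begin{equation*}
(H(\textbf{a},V)+\lambda)^{-1}f\,=\,\int_{0}^{\infty} e^{-\lambda t}\, e^{-tH(\textbf{a},V)}f\,dt,
\end{equation*}
by taking absolute values under the integral and applying \eqref{dom} to obtain $|(H(\textbf{a},V)+\lambda)^{-1}f|\leq\int_{0}^{\infty}e^{-\lambda t}e^{t\Delta}|f|\,dt=(-\Delta+\lambda)^{-1}|f|$. The main obstacle is the low regularity of $\textbf{a}$: the formal chain rule used to derive Kato's inequality is not a priori justified when $\textbf{a}$ lies only in $L^{2}_{loc}$. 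I would circumvent this by a double mollification, smoothing both $u$ and $\textbf{a}$, establishing the inequality for the resulting smooth approximants, and passing to the limit using the closedness of the form $\mathcal{Q}$ and the density of $C_{0}^{\infty}$ in $\dot{\mathcal{V}}$. Alternatively, the distributional route can be bypassed by the Trotter product formula $e^{-tH(\textbf{a},V)}=\lim_{n}\bigl(e^{-(t/n)H(\textbf{a},0)}e^{-(t/n)V}\bigr)^{n}$, which reduces the statement to the pure magnetic domination established in \cite{Si4}.
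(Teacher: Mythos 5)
The paper does not actually prove this proposition: it records both inequalities as known facts, citing \cite{Si4} for the semigroup domination, and \eqref{eq:KS} is the classical Kato--Simon resolvent bound obtained from it. Your proposal therefore supplies a proof where the paper supplies a reference, and the route you take is the standard one from the literature (Kato's distributional inequality for the magnetic Laplacian, parabolic comparison, then the Laplace transform $\big(H+\lambda\big)^{-1}=\int_0^\infty e^{-\lambda t}e^{-tH}\,dt$). The structure is correct: your computation of $\nabla|u|_\epsilon$ is right (with $L_j=\tfrac1i\partial_j-a_j$ one has $\sum_jL_j^2=-(\nabla-i\textbf{a})^2$, so the signs match), the comparison $e^{-t(-\Delta+V)}\le e^{t\Delta}$ for $V\ge 0$ is justified by Trotter or Feynman--Kac, and the passage from \eqref{dom} to \eqref{eq:KS} is immediate. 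Two points deserve more care than your sketch gives them. First, the Duhamel comparison applied to $v-|u|$ requires knowing that $t\mapsto|u(t)|$ has enough regularity to make $(\partial_t-\Delta+V)|u|\le 0$ usable against the positivity-preserving semigroup; this is where the analyticity of $e^{-tH}$ (so that $u(t)\in\mathcal{D}(H)$ for $t>0$) and a density argument enter, and it is the technically delicate part when $\textbf{a}$ is only $L^2_{loc}$ and $V$ only $L^1_{loc}$. Second, your alternative via the Trotter--Kato product formula for form sums, $e^{-tH(\textbf{a},V)}=\lim_n\bigl(e^{-(t/n)H(\textbf{a},0)}e^{-(t/n)V}\bigr)^n$, combined with $0\le e^{-(t/n)V}\le 1$ and the pure magnetic domination $|e^{-sH(\textbf{a},0)}g|\le e^{s\Delta}|g|$, is in fact the cleaner and fully rigorous path in this generality, and is essentially the argument of \cite{Si4}; I would make that the primary proof rather than the fallback.
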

 \begin{center}{\textbf{Fefferman-Phong inequalities}}
 \end{center}

The usual Fefferman-Phong inequalities are of the form:
 \begin{equation}\label{eq:feff}
  \int_{Q} | u |^{p}  \min\{\avert {Q}\omega, \frac{1}{R^p}\}\leq C\{\int_{Q} | Lu|^{p}  + \omega | u |^{p} \}.
 \end{equation}

In \cite{AB} we established an improved version for these inequalities in absence of the magnetic potential. We can extend this improvement to the magnetic Schr\"odinger operators:
\begin{lemm}\label{FP}\textbf{An improved Fefferman-Phong inequality} :\\ Let $\omega \in A_\infty$ and $1\le p<\infty$. Then there are constants $C>0$ and $\beta\in (0,1)$ depending only
on $p$, $n$ and the $A_\infty$ constant of $w$ such that for all cubes $Q$ $($with sidelength $R)$ and $u \in C^1(\mathbb{R}^{n})$, one has
\begin{equation}\label{eq:FP}
\int_Q |L u|^p + \omega|u|^p \ge \frac{C m_{\beta}({R^p  \avert{Q} \omega})}{ R^{p}} \int_Q |u|^p
 \end{equation}
where $m_{\beta}(x)  = x$ for $x\le 1$ and $m_{\beta}(x) = x^\beta$ for $x\ge 1$. 
 
\end{lemm}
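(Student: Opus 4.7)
The strategy is to reduce this magnetic version directly to the non-magnetic improved Fefferman-Phong inequality from \cite{AB} by applying it to $|u|$ in place of $u$, and then using the diamagnetic inequality to control $|\nabla |u||$ by $|Lu|$.

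More precisely, I would proceed as follows. The non-magnetic analogue from \cite{AB} states: for $\omega \in A_\infty$ and $1 \le p < \infty$, there exist $C>0$ and $\beta \in (0,1)$, depending only on $p$, $n$ and the $A_\infty$ constant of $\omega$, such that for every cube $Q$ with sidelength $R$ and every $v \in W^{1,p}_{loc}(\mathbb{R}^n)$,
\begin{equation*}
\int_Q |\nabla v|^p + \omega |v|^p \,\ge\, \frac{C\, m_\beta\!\left(R^p \avert{Q} \omega\right)}{R^p} \int_Q |v|^p.
\end{equation*}
First I would apply this to $v := |u|$. Since $u \in C^1(\mathbb{R}^n)$, the function $|u|$ is locally Lipschitz, hence lies in $W^{1,p}_{loc}(\mathbb{R}^n)$, so the inequality above is legitimate. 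This yields
\begin{equation*}
\int_Q |\nabla |u||^p + \omega |u|^p \,\ge\, \frac{C\, m_\beta\!\left(R^p \avert{Q} \omega\right)}{R^p} \int_Q |u|^p.
\end{equation*}

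Next I would invoke the diamagnetic inequality \eqref{2diam}, which gives $|\nabla |u|(x)| \le |L u(x)|$ almost everywhere. Raising to the $p$-th power and integrating, the left-hand side above is bounded by $\int_Q |Lu|^p + \omega|u|^p$, which produces exactly \eqref{eq:FP}.

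The entire substance of the argument is already contained in \cite{AB}; there is essentially no obstacle. The only minor point is that the statement in \cite{AB} may be formulated for $C^1$ functions, while $|u|$ is only Lipschitz. This is handled either by observing that the proof in \cite{AB} works verbatim for $W^{1,p}_{loc}$ functions, or by regularizing $|u|$ through convolution with a standard mollifier $\rho_\varepsilon$, applying the non-magnetic inequality to $\rho_\varepsilon * |u|$, and passing to the limit as $\varepsilon \to 0$ using the dominated convergence theorem together with the bound $|\nabla(\rho_\varepsilon * |u|)| \le \rho_\varepsilon * |\nabla|u|| \le \rho_\varepsilon * |Lu|$.
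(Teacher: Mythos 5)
Your proof is correct and follows exactly the route the paper indicates: apply the non-magnetic improved Fefferman--Phong inequality of Lemma 2.1 in \cite{AB} to $|u|$ and then use the diamagnetic inequality $|\nabla|u||\le|Lu|$ to pass to the magnetic gradient. Your remark about regularity of $|u|$ is a reasonable technical footnote but not a substantive addition.
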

	
The proof is the same as that of Lemma 2.1 in \cite{AB}, combined with the diamagnetic inequality.
\begin{center}{\textbf{Iwatsuka Gauge transform}}

\end{center}

\begin{lemm}\label{th:jauge}

 Let $\textbf{a}\in L^{2}_{loc}(\mathbb{R}^{n})^{n}$ and $Q$ a cube of $\mathbb{R}^n$. Suppose $B\in C^{1}(\mathbb{R}^{n}, M_{n}(\mathbb{R}))$. Then there exist $\textbf{h}\in C^{1}(Q,\mathbb{R}^{n})$ and a real function $\phi \in C^{2}(Q)$, 
 such that $curl \textbf{h}=B$ in $Q$ and
\begin{equation}\label{Gau}\textbf{h}=\textbf{a}- \nabla \phi,\qquad \textrm{in} \,\, Q,
\end{equation}
with
\begin{equation}\label{jaugebes1}\sup_{ Q} | \textbf{h}|\leq  C\,R\,\sup_{Q} |B|,
\end{equation}
and
\begin{equation}\label{jaugebes2}\sup_{ Q} | \nabla\textbf{h}|\leq  C\big(\sup_{Q} |B|+R\,\sup_{Q} |\nabla B|\big).
\end{equation}
\end{lemm}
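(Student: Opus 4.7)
The plan is to build $\mathbf{h}$ by the explicit Poincar\'e homotopy formula on the star-shaped cube $Q$. After translating we may assume $Q$ is centered at the origin, and we set
\begin{equation*}
h_j(x) = \sum_{k=1}^n x_k \int_0^1 t\, b_{jk}(tx)\, dt, \qquad j = 1,\dots, n.
\end{equation*}
Since $B \in C^1$, differentiation under the integral gives $\mathbf{h} \in C^1(Q,\mathbb{R}^n)$, and the two pointwise bounds fall out by inspection: for $x \in Q$ one has $|x| \le CR$, so
\begin{equation*}
|h_j(x)| \le CR\sup_Q |B|,
\qquad
|\partial_l h_j(x)| \le C\bigl(\sup_Q |B| + R\sup_Q |\nabla B|\bigr).
\end{equation*}

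The main point is then to verify $\mathrm{curl}\,\mathbf{h} = B$ on $Q$. Differentiating under the integral yields
\begin{equation*}
\partial_m h_j - \partial_j h_m
= 2\int_0^1 t\, b_{jm}(tx)\, dt + \sum_k x_k \int_0^1 t^2\bigl(\partial_m b_{jk} - \partial_j b_{mk}\bigr)(tx)\, dt.
\end{equation*}
Since $B$ is the distributional curl of $\mathbf{a}$ and $B \in C^1$, a standard mollification argument shows that the cyclic Bianchi identity
\begin{equation*}
\partial_m b_{jk} + \partial_j b_{km} + \partial_k b_{mj} = 0
\end{equation*}
holds pointwise, and together with the antisymmetry $b_{mk} = -b_{km}$ this gives $\partial_m b_{jk} - \partial_j b_{mk} = \partial_k b_{jm}$. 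Using $\sum_k x_k \partial_k b_{jm}(tx) = \frac{d}{dt}b_{jm}(tx)$, an integration by parts in $t$ converts the second term into $b_{jm}(x) - 2\int_0^1 t\,b_{jm}(tx)\,dt$, which cancels the first term and leaves precisely $b_{jm}(x)$.

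For the gauge $\phi$: on the simply connected cube $Q$, the field $\mathbf{a} - \mathbf{h} \in L^2_{loc}(Q)$ has vanishing distributional curl (both $\mathbf{a}$ and $\mathbf{h}$ having $B$ as curl), hence by Poincar\'e's lemma in the distributional sense there exists $\phi$ with $\nabla \phi = \mathbf{a} - \mathbf{h}$, yielding \eqref{Gau}. The main obstacle is the curl identity itself: one has to be careful to justify the pointwise cyclic relation for $B$ given only $\mathbf{a} \in L^2_{\mathrm{loc}}$, which is where the hypothesis $B \in C^1$ is essential, and then execute the Bianchi plus integration-by-parts collapse cleanly.
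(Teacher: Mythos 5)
Your proof is correct, and it takes a genuinely different route from the paper. The paper itself does not prove Lemma \ref{th:jauge}: it simply cites Lemma 2.4 of Shen \cite{Sh5}, which in turn rests on the coordinate-by-coordinate recursive construction of Iwatsuka \cite{I} (one integrates $b_{jk}$ one variable at a time along edges of the cube to build $\mathbf{h}$ componentwise). You replace that recursion by the radial Poincar\'e homotopy operator $h_j(x)=\sum_k x_k\int_0^1 t\,b_{jk}(tx)\,dt$ centered at the center of $Q$; the cube being star-shaped makes this legitimate, and your Bianchi-plus-integration-by-parts computation correctly yields $\partial_m h_j-\partial_j h_m=b_{jm}$. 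Your pointwise bounds on $\mathbf{h}$ and $\nabla\mathbf{h}$ then read off immediately with the same form as \eqref{jaugebes1}--\eqref{jaugebes2}. This is a cleaner, symmetric, single-formula alternative to the Iwatsuka recursion, at no cost in the constants.

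One small caveat you should flag: with only $\mathbf{a}\in L^2_{loc}$, the Poincar\'e lemma gives $\nabla\phi=\mathbf{a}-\mathbf{h}\in L^2_{loc}(Q)$, hence $\phi\in W^{1,2}_{loc}(Q)$, not $\phi\in C^2(Q)$ as the lemma states. The $C^2$ regularity of $\phi$ would require $\mathbf{a}\in C^1$, which is indeed what Shen assumes in his Lemma 2.4. As written, the paper's statement is slightly overreaching for $\mathbf{a}\in L^2_{loc}$; in the only place the lemma is actually invoked (the proof of Lemma \ref{th:jauge'}), it is applied to mollifications $\mathbf{a}_m\in C^\infty$, where your argument does produce $\phi_m\in C^2$ and the issue disappears. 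Your observation that the Bianchi identity must be justified distributionally and then upgraded using $B\in C^1$ is exactly the right place to put the care.
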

See the proof of Lemma 2.4 in \cite{Sh5} which uses the construction of Iwatsuka \cite{I}.

\begin{lemm}\label{th:jauge'}

 Let $Q$ a cube in $\mathbb{R}^n$. Suppose $B\in L^{\infty}(Q, M_{n}(\mathbb{R}))$. Then there exist $\textbf{h}\in L^{\infty}(Q,\mathbb{R}^{n})$ and a real function $\phi \in W^{1,\infty}(Q)$, 
 such $curl \textbf{h}=B$ and
\begin{equation}\label{phi'}\textbf{h}=\textbf{a}- \nabla  \phi \qquad\textrm{a.e in}\, Q ,
\end{equation}
and
 \begin{equation}\label{jaugebes'}\sup_{ Q} | \textbf{h}|\leq  C\,R\,\sup_{Q} |B|,
\end{equation}
and
\begin{equation}\label{jaugebes''}\sup_{ Q} | \nabla\textbf{h}|\leq  C\big(\sup_{Q} |B|+R\,\sup_{Q} |\nabla B|\big).
\end{equation}
\end{lemm}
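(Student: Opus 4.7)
The plan is to deduce Lemma~\ref{th:jauge'} from the smooth version Lemma~\ref{th:jauge} by mollifying $B$, applying the $C^1$ case, and passing to a weak-$\ast$ limit. The key point is that the $L^\infty$ bounds \eqref{jaugebes1}--\eqref{jaugebes2} are homogeneous in $B$ and $\nabla B$, and hence survive convolution with an approximate identity.

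First I would fix a slightly enlarged cube $\tilde Q$ compactly containing $Q$, extend $B$ to $\mathbb{R}^n$ by multiplication with a smooth cutoff supported in $\tilde Q$, and set $B_\epsilon := \rho_\epsilon \ast B$ where $\rho_\epsilon$ is a standard nonnegative mollifier; then $B_\epsilon \in C^\infty$ with $\|B_\epsilon\|_{L^\infty(Q)} \le C\|B\|_{L^\infty(\tilde Q)}$, and, whenever $\nabla B \in L^\infty$ in the distributional sense, $\|\nabla B_\epsilon\|_{L^\infty(Q)} \le C\|\nabla B\|_{L^\infty(\tilde Q)}$. The same mollification applied to $\mathbf{a}$ yields $\mathbf{a}_\epsilon \in C^\infty$ with $\mathrm{curl}\,\mathbf{a}_\epsilon = B_\epsilon$ on $Q_\epsilon := \{x\in Q : \mathrm{dist}(x,\partial Q) > \epsilon\}$. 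Applying Lemma~\ref{th:jauge} to $(\mathbf{a}_\epsilon, B_\epsilon)$ on $Q_\epsilon$ produces $\mathbf{h}_\epsilon \in C^1$ and $\phi_\epsilon \in C^2$ satisfying $\mathrm{curl}\,\mathbf{h}_\epsilon = B_\epsilon$, $\mathbf{h}_\epsilon = \mathbf{a}_\epsilon - \nabla\phi_\epsilon$, and the bounds \eqref{jaugebes1}--\eqref{jaugebes2} with constants uniform in $\epsilon$.

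Next, the uniform $L^\infty$ bound yields a weak-$\ast$ limit $\mathbf{h} \in L^\infty(Q)$ along a subsequence $\epsilon_k \to 0$; lower semicontinuity of the norm gives \eqref{jaugebes'}, and when $\nabla B \in L^\infty$ the uniform control of $\nabla\mathbf{h}_{\epsilon_k}$ passes to the limit to give \eqref{jaugebes''}. Since the curl is continuous in the distributional topology, $\mathrm{curl}\,\mathbf{h} = B$ on $Q$, a diagonal argument over the exhaustion $Q_{\epsilon_k} \nearrow Q$ handling the behaviour near $\partial Q$. To recover $\phi$, one observes that $\mathbf{a} - \mathbf{h}$ is distributionally curl-free on the simply connected cube $Q$, so the distributional Poincar\'e lemma furnishes $\phi$ with $\nabla\phi = \mathbf{a} - \mathbf{h}$ a.e., which is \eqref{phi'}; the integrability of $\phi$ is then dictated by that of $\nabla\phi = \mathbf{a} - \mathbf{h}$.

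The main technical obstacle is ensuring the gradient bound \eqref{jaugebes''} actually passes to the weak-$\ast$ limit: one interprets $\nabla\mathbf{h}$ in the distributional sense and uses that distributional differentiation commutes with weak-$\ast$ convergence in $L^\infty$, so that a sequence uniformly bounded in $L^\infty$ together with its gradients yields a limit with the same bounds. Once this is set up, the remaining steps are the routine gauge construction applied to the limit, with the constants inherited directly from Lemma~\ref{th:jauge}.
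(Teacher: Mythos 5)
Your proposal follows essentially the same route as the paper: regularize, invoke the $C^1$ gauge lemma (Lemma~\ref{th:jauge}), and pass to the limit using the uniform bounds. The paper mollifies $\textbf{a}$ directly and sets $B_m=\mathrm{curl}\,\textbf{a}_m$ (which sidesteps the compatibility issue your cutoff extension of $B$ would otherwise create, since in general $\mathrm{curl}(\chi\textbf{a})\neq\chi B$), then obtains $\textbf{h}$ as a strong $L^n$ limit and deduces the sup-bound from a.e.\ convergence along a subsequence; your weak-$\ast$ $L^{\infty}$ limit and distributional Poincar\'e recovery of $\phi$ are clean, equivalent ways of finishing, though you might note, as your last observation suggests, that $\phi$ then inherits only the $L^2_{loc}$ regularity of $\textbf{a}$ rather than $W^{1,\infty}$.
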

\begin{proof}

Let $(\textbf{a}_{m})_{m\geq 0}$ be the sequence of $C^{1}$ functions obtained by convolution with $\textbf{a}$ and converge in $L^{2}_{loc}$ to $\textbf{a}$. Set $(B_{m})_{m\geq 0}$, $(\phi_{m})_{m\geq 0}$ and $(\textbf{h}_{m})_{m\geq 0}$ as the corresponding sequences of the Lemma \ref{th:jauge}. Note that $(\textbf{h}_{m})_{m\geq 0}$ converges in $L^{n}(Q,\mathbb{R}^n)$. Let $\textbf{h}$ be this limit, it satisfies \eqref{phi'}.
Note also that $(B_{m})_{m\geq 0}$ converges to $B$ in $L^{n/2}_{loc}(Q, M_{n}(\mathbb{R}))$ and  $curl \textbf{h}= B$ holds almost everywhere in $Q$, where curl is defined in the sens of distribution. 
 
We know that for all $ m\geq 0$, $$ (\avert{Q} | \textbf{h}_{m}|^{n})^{1/n}\leq  c\,R(\avert{Q} |B_{m}|^{\frac{n}{2}})^{\frac{2}{n}},$$
uniformly in $m$. Then 
applying the limit, we obtain
 $$(\avert{Q} | \textbf{h}|^{n})^{1/n}\leq  c\,R(\avert{Q} |B|^{\frac{n}{2}})^{\frac{2}{n}}.$$
 Hence inequality \eqref{jaugebes'} follows easily. By a similar argument, inequality \eqref{jaugebes''} holds.
 \end{proof}

 \section{Reverse estimates}
The present section talks about certain tools that are handy in the proof of Theorem \ref{th:1b'}.
 Note that this theorem can be obtained as a consequence of Theorem 1.6 in \cite{Be} if we also assume that  $|B|$ is in $RH_{n/2}$. However, condition \eqref{eq:starstar} is sufficient to obtain estimate \eqref{eq:revLp}.

By duality, the $L^p$ boundedness of Riesz transforms for $1<p\leq 2$ (proved by \cite{Sik} and \cite{DOY}) implies the estimate \eqref{eq:revLp} for any $p\geq 2$. For $p< 2$, 
we follow step by step the proof of the Theorem 1.2 of \cite{AB} once the appropriate Calder\'on-Zygmund decomposition \ref{CZ'} is established. We also use the fact that the time derivatives  of the kernel of semigroup $ e ^{-tH} $ satisfy Gaussian estimates (see \cite{CD}, \cite{Da}, \cite{G} and \cite{Ou} or, theorem 6.17).

  Let us introduce the main technical lemma of this work, which in itself is an interesting result:

 \begin{lemm}\label{CZ'} Let $1\leq p<2$ and $\alpha>0$. Under the assumptions of Theorem \ref{th:1b'}, we have:
 for any function $f\in C^{\infty}_{0}(\mathbb{R} ^n)$ such that $$ \| L f \|_{p}+\| V^{\frac{1}{2}}f \|_{p} <\infty.$$
 Then, one can find a collection of cubes $(Q_k)$ and functions $g$ and $b_k$  such that  
 \begin{equation} \label{eq:cza'}f=g+\sum_{k} b_k
 \end{equation}
and the following properties hold:
 \begin{equation}\label{eq:czb'}\| L g \|_{2}+\| V^{\frac{1}{2}} g \|_{2} \leq C\alpha^{1-\frac{p}{2} }( \|  Lf \|_{p}+\| V^{\frac{1}{2}} f \|_{p} )^{p/2}
 \end{equation}
 \begin{equation}\label{eq:czc'} \int_{Q_k}| L b_{k} |^{p}+ R_{k}^{-p} | b_{k}|^{p}  \leq C\alpha^{p} | Q_{k} |
 \end{equation}
 \begin{equation} \label{eq:czd'}\sum_{k}| Q_k | \leq C\alpha ^{-p}(\int_{\mathbb{R} ^{n}} | L f |^{p}+| V^{\frac{1}{2}}f |^{p} )
 \end{equation}
 \begin{equation}\label{eq:cze'}\sum_{k} \mathbf{1}_{Q_k} \leq N,
 \end{equation}
where 
 $N$ depends only on  the dimension and $C$ on the dimension,  $p$ and the $RH_{n/2}$ constant of $|B|$. Here, $R_{k}$ denotes the sidelength of $Q_{k}$ and gradients are taken in the sense of distributions in $\mathbb{R}^n$.

 \end{lemm}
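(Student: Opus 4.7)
The plan is to mimic the non-magnetic Calder\'on--Zygmund decomposition of \cite{AB}, handling the magnetic field locally via the Iwatsuka gauge of Lemma~\ref{th:jauge'}. Set $F := |Lf|^p + V^{p/2}|f|^p$, which lies in $L^1$ by hypothesis, and let $\Omega := \{x : MF(x) > \alpha^p\}$ where $M$ is the Hardy--Littlewood maximal function; the weak-type $(1,1)$ bound for $M$ immediately yields \eqref{eq:czd'}. Whitney-decompose $\Omega = \bigcup_k Q_k$ with $\sum_k \mathbf{1}_{Q_k} \leq N$ (hence \eqref{eq:cze'}), sidelengths $R_k$ comparable to $\mathrm{dist}(Q_k, \Omega^c)$, and for each $k$ a point $y_k \in \Omega^c$ close to $Q_k$, so that setting $Q_k^* := \lambda_0 Q_k$ for an absolute $\lambda_0$ gives $\avert{Q_k^*} F \leq MF(y_k) \leq \alpha^p$.

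On each $Q_k^*$, Lemma~\ref{th:jauge'} produces $\mathbf{h}_k,\phi_k$ with $\mathbf{a} = \mathbf{h}_k + \nabla \phi_k$ a.e.\ and $\|\mathbf{h}_k\|_{L^\infty(Q_k^*)} \leq C R_k \avert{Q_k^*} V$, the last bound using hypothesis \eqref{eq:starstar}. Writing $\tilde f_k := e^{-i\phi_k} f$ and $\tilde L_{k,j} := \tfrac{1}{i}\partial_j - h_{k,j}$, the identity $L = e^{i\phi_k}\tilde L_k e^{-i\phi_k}$ on $Q_k^*$ reduces the magnetic computation to an ordinary gradient perturbed by the small drift $\mathbf{h}_k$. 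I then split the cubes using the improved Fefferman--Phong inequality of Lemma~\ref{FP} with weight $\omega := V^{p/2}$: the \emph{high-potential} cubes are those with $R_k^p\avert{Q_k^*} V^{p/2} \geq 1$, where Fefferman--Phong yields $R_k^{-p}\avert{Q_k^*}|f|^p \leq C\alpha^p$; the remaining \emph{low-potential} cubes satisfy, via the $A_\infty$-comparability of $\avert{Q}V^{p/2}$ with $(\avert{Q}V)^{p/2}$, the bound $R_k\sup_{Q_k^*}|\mathbf{h}_k| \leq C$.

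Fix a smooth partition of unity $(\chi_k)$ subordinate to $(2Q_k)$ with $\sum_k \chi_k = \mathbf{1}_\Omega$ and $|\nabla \chi_k| \leq C/R_k$, and set
\begin{equation*}
 b_k := \chi_k\bigl(f - c_k e^{i\phi_k}\bigr), \qquad g := f - \sum_k b_k,
\end{equation*}
with $c_k := \avert{Q_k^*} e^{-i\phi_k} f$ on low-potential cubes and $c_k := 0$ on high-potential cubes. To verify \eqref{eq:czc'}, on high-potential cubes Leibniz for $Lb_k = L(\chi_k f)$ combined with the Fefferman--Phong bound of the previous step suffices; on low-potential cubes, $\tilde b_k := e^{-i\phi_k} b_k = \chi_k(\tilde f_k - c_k)$ satisfies the scalar Poincar\'e estimate $\avert{Q_k^*}|\tilde f_k - c_k|^p \leq C R_k^p \avert{Q_k^*}|\nabla \tilde f_k|^p$, together with $|\nabla \tilde f_k| \leq |Lf| + |\mathbf{h}_k||f|$, and the error $|\mathbf{h}_k|^p|f|^p$ is absorbed into $V^{p/2}|f|^p$ using $|\mathbf{h}_k| \leq C\bigl(\avert{Q_k^*}V\bigr)^{1/2}$ and the reverse H\"older property of $V$. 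For \eqref{eq:czb'}, split $\|Lg\|_2^2 + \|V^{1/2}g\|_2^2$ over $\Omega^c$ and $\Omega$: on $\Omega^c$, $g = f$ and $|Lf|^p + V^{p/2}|f|^p \leq \alpha^p$ a.e.\ by Lebesgue differentiation, so
\begin{equation*}
 \int_{\Omega^c}\bigl(|Lf|^2 + V|f|^2\bigr) \leq \alpha^{2-p}\bigl(\|Lf\|_p^p + \|V^{1/2}f\|_p^p\bigr);
\end{equation*}
on $\Omega$, $g = \sum_k c_k e^{i\phi_k}\chi_k$, each $|c_k|$ is bounded by $\bigl(\avert{Q_k^*}|f|^p\bigr)^{1/p}$, and bounded overlap together with $|\Omega|\leq C \alpha^{-p}\|F\|_1$ close the estimate.

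The main obstacle is the low-potential case in the verification of \eqref{eq:czc'}: the scalar Poincar\'e inequality on $\tilde f_k$ produces a magnetic error $|\mathbf{h}_k|^p|f|^p$ that must be absorbed back into $|Lf|^p + V^{p/2}|f|^p$. This is precisely where hypothesis \eqref{eq:starstar} on $|B|$ and the $A_\infty$ exchange between $\avert{Q}V$ and $\avert{Q}V^{p/2}$ enter essentially; the Iwatsuka gauge estimate $|\mathbf{h}_k| \leq CR_k\avert{Q_k^*}V$, which compares $\mathbf{h}_k$ with an \emph{average} of $V$ rather than a pointwise value, is what makes this absorption possible.
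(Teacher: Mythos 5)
Your construction (level set $\Omega$, Whitney cubes, type split by $R^2\avert{Q}V\lessgtr 1$, and the gauge-corrected $b_k$ with $c_k = \avert{Q_k^*}e^{-i\phi_k}f$ on low-potential cubes) matches the paper's, and your treatment of \eqref{eq:czd'}, \eqref{eq:cze'}, \eqref{eq:czc'}, and the $\|V^{1/2}g\|_2$ piece of \eqref{eq:czb'} is essentially theirs. The gap is the $\|Lg\|_2$ estimate over $\Omega$, which you dismiss with ``each $|c_k|$ is bounded by $(\avert{Q_k^*}|f|^p)^{1/p}$, and bounded overlap\dots close the estimate.'' That is not sufficient. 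On $\Omega$ we have $g=\sum_{k\in J}c_ke^{i\phi_k}\chi_k$, so
\begin{equation*}
Lg=\frac1i\sum_{k\in J}c_ke^{i\phi_k}\nabla\chi_k-\sum_{k\in J}c_ke^{i\phi_k}\mathbf h_k\chi_k=:G_1+G_2.
\end{equation*}
The term $G_2$ is controlled by $|c_k|$ and $\|\mathbf h_k\|_\infty\le CR_k\avert{}V$ as you indicate, but $G_1$ carries a factor $|\nabla\chi_k|\sim R_k^{-1}$, and the naive size bound $|c_k|/R_k\sim(\avert{Q_k^*}|f|^p)^{1/p}/R_k$ is \emph{not} $O(\alpha)$ on a low-potential cube (that bound is only available for high-potential cubes via Fefferman--Phong). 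One must therefore exploit the cancellation $\sum_k\nabla\chi_k=0$ on $\Omega$, and hence estimate the \emph{oscillation} of $c_ke^{i\phi_k}$ across adjacent Whitney cubes.

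This is precisely where the magnetic case departs from \cite{AB}: the local gauges $\phi_k$ differ from cube to cube, so the oscillation contains a phase-mismatch term. The paper (steps e) and f)) handles it by fixing a reference gauge $\tilde\phi_m$ on the enlarged cube $\tilde Q_m=cQ_m$ containing every $2Q_l$ that meets $2Q_m$, splitting $G_1=I+II+III$, and controlling $I$ via $\nabla(\phi_k-\tilde\phi_m)=\tilde{\mathbf h}_m-\mathbf h_k$ together with the uniform bound $\|\mathbf h_k\|_\infty,\|\tilde{\mathbf h}_m\|_\infty\lesssim R_k\avert{}V$, which yields $\big|(\phi_k-\tilde\phi_m)(y)-(\phi_k-\tilde\phi_m)(x)\big|\lesssim R_k(\avert{}V^{1/2})$ for $x,y$ in the cube. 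The terms $II$ and $III$ are the telescoping and boundary pieces familiar from \cite{AB} and \cite{Aus}. None of this is present in your write-up, and a bound on $|c_k|$ alone cannot replace it. You should also record (as in step e) of the paper) that $\sum_k b_k$ converges in $\mathcal D'(\mathbb R^n)$, so that $Lg=Lf-\sum_k Lb_k$ is well defined as a function and the pointwise identity $Lg=(Lf)\mathbf 1_F+G_1+G_2$ can be invoked.
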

 \begin{rem}We establish an improved version of estimate \eqref{eq:czb'}:
 \begin{equation}\label{Lg'}
 \|Lg\|_{\infty}\leq C\alpha.
 \end{equation}
 \end{rem}
 \begin{proof} Let $\Omega$ be the open set $\{ x\in \mathbb{R}^{n}; M\big{(}| L f|^{p} +| V^{\frac{1}{2}}f |^{p} \big{)}(x)>\alpha^{p} \}$, where $M$ is the uncentered maximal operator over the cubes of $ \mathbb{R}^{n}$. If $\Omega$ is empty, then set $g=f$ and $b_{i}=0$. Otherwise, our argument is subdivided into six steps.\\
\\
 \textbf{a) Construction of the cubes:}\\
 \\
The maximal theorem gives us $$| \Omega | \leq C{\alpha}^{-p}\int_{\mathbb{R}^n} | L f|^{p} +| V^{\frac{1}{2}}f |^{p}<\infty .$$
 Set $F=\mathbb{R}^n\setminus \Omega$. Using the Lebesgue derivation Theorem, we have
 \begin{equation}| Lf|^{p} +| V^{\frac{1}{2}}f |^{p} \leq \alpha^{p}, \,\, \textrm{ a.e in}\,F.
 \end{equation}
 Let $(Q_k)$ be a Whitney decomposition of $\Omega$ by dyadic cubes so to say $\Omega$ is the disjoint union of the $Q_k$'s, 
the cubes $2Q_i$ are contained in  $\Omega$ and have the bounded overlap property, but the cubes $4Q_k$ intersect $F$.\footnote{In fact, the factor 2 should be some $c=c(n)>1$ explicitely given in [\cite{St},Chapter 6]. We use this convention to avoid too many irrelevant constants.}
Hence $$\sum_{k} | 2 Q_k | \leq C | \Omega | \leq C{\alpha}^{-p}\int_{\mathbb{R}^n} | L f|^{p} +| V^{\frac{1}{2}}f |^{p} .$$ Thus, \eqref{eq:czd'} and 
 \eqref{eq:cze'} are satisfied by $2 Q_k$.\\
 \\
 \textbf{b) Construction of $b_{k}$:}\\ 
 \\
 Let $(\chi_k)$ be a partition of unity on $\Omega$
associated to the covering $(Q_k)$ so that for each $k$, $\chi_k$ is a $C^1$ function supported in $2Q_k$ with 
 \begin{equation}\label{part}
 \| \chi_{k} \|_\infty + R_k \| \nabla {\chi_{k}} \| _{\infty} \leq c(n),
 \end{equation}
  where $R_k$ is the sidelength of $Q_k$ and $\sum \chi_{k} =1$ on $\Omega$.
  We say that a cube $Q$ is of type 1 if 
  $R^{2}\avert {Q} V > 1$,
 and is of type 2 if $R^{2}\avert {Q} V > 1$.
  
   We apply the gauge transformation on the cubes
$ 2Q_k$ such that $ Q_k$ is of type 2, hence there exist $\textbf{h}_{k}\in L^{n}(2Q_{k},\mathbb{R}^{n})$ and a real function $\phi_{k} \in H^{1}(2Q_{k})$ such that
\begin{equation}\textbf{h}_{k}=\textbf{a}- \nabla \phi \qquad \textrm{in}\,\, 2Q_{k},
 \end{equation}
and
 \begin{equation}\label{jaugecz'}\sup_{2 Q_{k}} | \textbf{h}|\leq  C\,R_{k}\,\sup_{2Q_{k}} |B|\leq C R_{k} \avert {2Q_{k}}V.
\end{equation}
We denote  $$m_{2Q_{k}}(e^{i\phi_{k}}f)= \avert{2Q_{k}} (e^{i\phi_{k}}f).$$ 
Let
 \begin{equation}
 b_{k}=\left\{\begin{array}{ll}
 f \chi_{k}, \,\textrm{if $2Q_{k}$ is of type 1,}\\
 \big{(}f-e^{-i\phi_{k}}m_{2Q_{k}}(e^{i\phi_{k}}f)\big{)} \chi_{k},\textrm{ if $2Q_{k}$ is of type 2.}

 \end{array}
 \right.
 \end{equation}
 \textbf{c) Proof of the estimate \eqref{eq:czc'}:}\\
 \\  
  Suppose $2Q_k$  is of type 1, $(2R_{k})^{2}\avert {2Q_{k}} V > 1$. Then $$(2R_{k})^{-p}\leq \big{(}\avert {2Q_{k}} V )^{p/2}\leq C \avert {2Q_{k}} V ^{p/2},$$
  here we used  $V^{p/2} \in RH_{2/p}$ since $p<2$.
  
Now we will control $L\,b_{k}$:
$$L b_{k}= L( f\chi_{k})=(Lf)\chi_{k}+\frac{1}{i}f\,\nabla\chi_{k},$$
then
$$\int_{2Q_k}| L b_{k} |^{p}+ R_{k}^{-p} | b_{k}|^{p}  \leq C \| \chi_{k} \|^{p} _{\infty}\int_{2Q_k}| L f |^{p} +\| \nabla\chi_{k} \|^{p} _{\infty}\int_{2Q_k} | f |^{p} +R_{k}^{-p} \| \chi_{k} \|^{p} _{\infty}\int_{2Q_k} | f |^{p}$$$$ \leq C \{\int_{2Q_k}| L f |^{p} + R_{k}^{-p}\int_{2Q_{k}}| f |^{p} \} \leq C \{ \int_{\tilde{Q_k}} | L f |^{p} + | V^{\frac{1}{2}}f |^{p}  \}\leq C\alpha^{p}  |{Q_k} |,$$
 here we used the $L^p$ version of the Fefferman-Phong inequality \eqref{eq:feff} and the intersection of $4Q_{k}$ with $F$. Hence estimation \eqref{eq:czc'} holds for the cubes of type 1.
 
  If $Q_k$ is of type 2,  $R_{k}^{2}\avert{Q_{k}} V \leq 1$. $V(x)dx$ is a doubling measure, then there exists $C>0$, such that $R_{k}^{2}\avert{Q_{2k}} |B| \leq C$.
 $$b_{k}=(f-e^{-i\phi_{k}}m_{2Q_{k}}(e^{i\phi_{k}}f)) \chi_{k}.$$
 Let us estimate $L\,b_{k}$. By the Gauge invariance, all we require is the estimation of $\tilde{L}(e^{i\phi_{k}}b_{k})$, where $$\tilde{L}=\frac{1}{i}\nabla-\textbf{h}_{k}.$$
We have
 $$ \tilde{L}(e^{i\phi_{k}}b_{k}) =  \chi_{k}(\tilde{L} f_{k})+ \frac{1}{i}(f_{k}-m_{2Q_{k}}f_{k}) \nabla\chi _{k}-\big{(}\avert{2Q_{k}} f_{k}\big{)}\, \chi_{k}\,  \textbf{h}_{k},$$
where $f_{k}=e^{i\phi_{k}}f$. Thus,
     $$\big{(}\avert{2Q_{k}} |L b_k|^{p}\big{)}^{1/p}\leq C \{ \big{(}\avert{2Q_{k}}|\tilde{L} f_{k}|^{p}\big{)}^{1/p}\, ||\chi_{k}||_{\infty}+ \big{(}\avert{2Q_{k}}|(f_{k}-m_{2Q_{k}}f_{k})|^{p}\big{)}^{1/p}\,|| \nabla\chi _{k}||_{\infty} $$$$+\big{(}\avert{2Q_{k}} |\textbf{h}_{k}|^{p}\avert{2Q_{k}}| f_{k}|^{p}\big{)}^{1/p}\, ||\chi_{k}||_{\infty}\}.$$
using Poincar\'e inequality and condition \eqref{part}, we obtain $$\big{(}\avert{2Q_{k}} |L b_k|^{p}\big{)}^{1/p}\leq C \{ \big{(}\avert{2Q_{k}}|\tilde{L} f_{k}|^{p}\big{)}^{1/p}+ \big{(}\avert{2Q_{k}}|\nabla f_{k}|^{p}\big{)}^{1/p} +\big{(}\avert{ 2Q_{k}} |\textbf{h}_{k}|^{p}\avert{2Q_{k}}| f_{k}|^{p}\big{)}^{1/p}\}$$

 $$\leq C \{ \big{(}\avert{2Q_{k}}|\tilde{L} f_{k}|^{p}\big{)}^{1/p}+ \big{(}\avert{2Q_{k}}|\frac{1}{i}\nabla f_{k}-\textbf{h}_{k}f_{k}|^{p}\big{)}^{1/p}$$$$+ \big{(}\avert{ 2Q_{k}} |\textbf{h}_{k}|^{p}\avert{2Q_{k}}| f_{k}|^{p}\big{)}^{1/p}+ \big{(}\avert{ 2Q_{k}}|\textbf{h}_{k}f_{k}|^{p}\big{)}^{1/p}\}$$
 $$\leq C \{ \big{(}\avert{2Q_{k}}|\tilde{L} f_{k}|^{p}\big{)}^{1/p}+C R_{k}\avert{ 2Q_{k}} V\,\big{(}\avert{2Q_{k}}| f_{k}|^{p}\big{)}^{1/p}\}$$
 $$\leq C \{ \big{(}\avert{2Q_{k}}|\tilde{L} f_{k}|^{p}\big{)}^{1/p}+ \big{(} \avert{ 2Q_{k}} V\big{)}^{\frac{1}{2}}\,\big{(}\avert{2Q_{k}}| f_{k}|^{p}\big{)}^{1/p}\}.$$
 Fefferman-Phong inequality \eqref{eq:FP} and $\big{(} \avert{ 2Q_{k}} V\big{)}^{\frac{1}{2}}\leq C \big{(} \avert{ 2Q_{k}} V^{p/2}\big{)}^{1/p}$ imply
 $$\big{(}\avert{2Q_{k}} |L b_k|^{p}\big{)}^{1/p}\leq C\big{(}\avert{2Q_{k}}|\tilde{L}f_{k}|^{p}+|V^{\frac{1}{2}}f_{k}|^{p}\big{)}^{1/p}.$$

Using gauge invariance, it follows $|L(f)|= |\tilde{L}(f_{k} )|$ and we deduce
$$\avert{2Q_{k}} |L b_k|^{p}\leq C \{ \avert{2Q_{k}}|L f|^{p}+|V^{\frac{1}{2}} f|^{p}\}\leq C \alpha^{p}.$$
Moreover,
 $$R_{k}^{-p}\avert{2Q_{k}} |b_k|^{p}=R_{k}^{-p}\avert{2Q_{k}} |(f_{k}-m_{2Q_{k}}f_{k}) \chi _{k}|^{p}\leq C \alpha^{p},$$
 here we used the previous argument.
Hence \eqref{eq:czc'} holds for $2Q_k$ of type $2$.\\
 \\
 \textbf{d) Definition and properties of $|B|^{\frac{1}{2}}g$:}\\ 
 \\
 Set $g=f- \sum b_{k}$. Note that, by \eqref{eq:cze'}, this sum is locally finite.
  It is clear that $g=f$ on $F$ and $g=\sum_{k\in J}e^{-i\phi_{k}}m_{2Q_{k}}(e^{i\phi_{k}}f) \chi_{k}$ on $\Omega$, where $J$ is the set of indices $k$ such that $Q_{k}$ is of type 2.
$$\int | V^{\frac{1}{2}}g|^{2}=\int_{F} | V^{\frac{1}{2}}g|^{2}+\int_{\Omega} | V^{\frac{1}{2}}g|^{2}=I+II.$$
By construction,
$$I=\int_{F} | V^{\frac{1}{2}}g|^{2}= \int_{F} | V^{\frac{1}{2}} f|^{2}\leq c \alpha^{2-p} \big{(}\| Lf \|_{p}+\| V^{\frac{1}{2}} f\|_{p}\big{)}^{p}.$$
  Using the $L^1$ version Fefferman-Phong inequality \eqref{eq:feff} we obtain
  $$II=\int_{\Omega} | V^{\frac{1}{2}}. g|^{2}\leq c \sum_{k\in  J}|Q_{k}|[\avert{2Q_{k}}V^{\frac{1}{2}}\avert{2Q_{k}}| f |]^{2}\leq C \sum_{k\in J}|Q_{k}|\alpha^{2}$$
  $$\qquad \leq c \alpha^{2}. \alpha^{-p} \int_{\mathbb{R}^n} |Lf |^{p}+| V^{\frac{1}{2}} f|^{p}.$$
Then
  \begin{equation}\label{bg'}
  \big{(}\int | V^{\frac{1}{2}}  g|^{2}\big{)}^{\frac{1}{2}} \leq c \alpha^{1-\frac{p}{2}} \big{(}\| Lf \|_{p}+\| V^{\frac{1}{2}} f\|_{p}\big{)}^{p/2}.
  \end{equation}
  \textbf{e)Calculation of $Lg$:}\\
\\
  Let $K$ the set of indices $k$. Let $\xi\in C^{\infty}_{0}(\mathbb{R}^{n})$, a test function. We know that, for all $k\in K$ such that $x\in 2Q_{k}$, there exists $C>0$ such that $d(x,F)> C\,R_{k}$. Therefore,
  $$\int \sum_{k \in K} |b_{k}| |\xi|\leq C \big{(} \int \sum_{k\in K} \frac{|b_{k}|}{R_{k}}\big{)} \sup_{x\in \mathbb{R}^n}\big{(} d(x,F) |\xi(x)|\big{)}.$$
  
The estimate \eqref{eq:czc'} gives us
$$\int  |b_{k}|^p\leq C{R_k}^p\alpha^{p} |Q_{k}|.$$
Hence
$$\int \sum_{k\in K} |b_{k}| |\xi|\leq C\alpha |\Omega| \sup_{x\in \Omega}\big{(} d(x,F) |\xi(x)|\big{)}.$$
We conclude that $\sum_{k\in K} b_{k}$ converges in the sense of distributions in $\mathbb{R}^n$. Then
$$\nabla g=\nabla f-\sum_{k\in K} \nabla b_{k},\,\, \textrm{in the sense of distributions in}\, \mathbb{R}^n.$$
 Since the sum is locally finite in $\Omega$ and vanishes on $F$, then $\textbf{a}\,g=\textbf{a}\,f - \sum_{k\in K}\textbf{a}\, b_{k}$ holds almost everywhere in $\mathbb{R}^n$. Hence 
$$Lg=Lf-\sum_{k\in K} L b_{k},\,\, \textrm{a.e in }\, \mathbb{R}^n.$$ 
\\
  \textbf{f) Proof of estimate \eqref{eq:czb'}:}\\
  \\ 
 To prove this inequality, we have to estimate $\|Lg\|_{2}$. It suffices to prove that $\|Lg\|_{\infty}\leq C \alpha$ since $\|Lg\|_{p}\leq C \big{(} \|Lf\|_{p}+ \|V^{\frac{1}{2}}f\|_{p})$).
We know that $\sum_{k\in K} \nabla \chi_{k}(x)=0$ for all $x\in \Omega$, then
 $$Lg = (L f) \mathbf{1}_{F}+ \sum_{k\in J}L(e^{-i\phi_{k}}\,m_{2Q_k}(e^{i\phi_{k}}f) \chi_{k})\qquad \textrm{a.e}. $$
 $$L( u)= e^{-i \phi_{k}} \tilde{L}(e^{i\phi_{k}} u) \quad \textrm{avec}\quad \tilde{L}=\frac{1}{i} \nabla -\textbf{h}_{k}.$$ Hence
 \begin{align*}\sum_{k\in J}L(e^{-i\phi_{k}}\,m_{2Q_k}(e^{i\phi_{k}}f) \chi_{k})&=\frac{1}{i}\sum_{k\in J}e^{-i\phi_{k}}m_{2Q_k}(e^{i\phi_{k}}f) \nabla\chi_{k}-\sum_{k\in J}e^{-i\phi_{k}}\,m_{2Q_k}(e^{i\phi_{k}}f) \chi_{k}\textbf{h}_{k}
\\
&=G_{1}+G_{2}.
\end{align*}
Now we will control $\|G_2\|_{\infty}$, we use \eqref{jaugecz'}, the $L^{1}$ version of Fefferman-Phong inequality \eqref{eq:feff}, the fact that $2Q_{k}$ is of type 2 and $V^{\frac{1}{2}}\in RH_{2}$ :
  $$|G_{2}(x)|=|\sum_{k\in J}m_{2Q_k}(e^{i\phi_{k}}f) \chi_{k}(x)\textbf{h}_{k}(x)|\leq C  \sum_{k\in J, x\in 2Q_{k}}R_{k}\avert{2Q_{k}}V|m_{2Q_k}(e^{i\phi_{k}}f)|$$$$ \leq C  \sum_{k\in J, x\in 2Q_{k}}\avert{2Q_{k}}V^{\frac{1}{2}}\avert{2Q_k}|f|\leq C N \alpha  .$$
Thus,
 \begin{equation}\label{eq:g2'}\|G_{2}\|_{\infty} \leq C \alpha.
 \end{equation}

Next, we estimate $\|G_1\|_{\infty}$. Remember that $G_{1}(x)=\sum_{k\in J}e^{-i\phi_{k}(x)}m_{2Q_{k}}(e^{i\phi_{k}}f) \nabla \chi_{k}(x)$.
 For all $m\in K$, consider $K_{m}=\{l\in K, 2Q_{l}\cap 2Q_{m}\neq\emptyset\}.$
 By construction of Whitney cubes, there exists a constant $c>0$ (we can take $c=18$) such that for any $m\in K$, $2Q_{l}\subset c \,Q_{m}$, for all $l\in K_m$. We denote $\tilde{Q}_{m}=cQ_m$. Let $\tilde{\phi}_{m}$ and $\tilde{h}_{m}$ the functions given by then gauge transform of Lemma \ref{th:jauge'} on $\tilde{Q}_m$. For a fixed $x$, we have
 
   \begin{align*}G_{1}(x)&=\sum_{k\in J}e^{-i\phi_{k}(x)}m_{2Q_{k}}(e^{i\phi_{k}}f) \nabla \chi_{k}(x)
\\
&=\sum_{k\in J } (e^{-i\phi_{k}(x)}m_{2Q_{k}}(e^{i\phi_{k}}f)-e^{-i\tilde{\phi}_{m}(x)}m_{2Q_{k}}(e^{i\tilde{\phi}_{m}}f)) \nabla \chi_{k}(x)
\\
&+\sum_{k\in J}e^{-i\tilde{\phi}_{m}(x)}\big{(}m_{2Q_{k}}(e^{i\tilde{\phi}_{m}}f)-m_{\tilde{Q}_{m}}(e^{i\tilde{\phi}_{m}}f)) \nabla \chi_{k}(x)
\\
&+\sum_{k\in J }e^{-i\tilde{\phi}_{m}(x)}m_{\tilde{Q}_{m}}(e^{i\tilde{\phi}_{m}}f) \nabla \chi_{k}(x)
\\
&=I+II+III.
\end{align*}
  First,
$$III=\sum_{k\in K_{m}} \chi_{m}(x)e^{-i\tilde{\phi}_{m}(x)}m_{\tilde{Q}_{m}}(e^{i\tilde{\phi}_{m}}f)  \nabla \chi_{k}(x)-\sum_{k\in K_{m}\setminus J} \chi_{m}(x)e^{-i\tilde{\phi}_{m}(x)}m_{\tilde{Q}_{m}}(e^{i\tilde{\phi}_{m}}f)  \nabla \chi_{k}(x).$$
The first term in $III$ vanishes since $\sum_{k\in K_{m}} \nabla \chi_{k}(x)=\sum_{k\in K} \nabla \chi_{k}(x)=0,$ for all $x\in 2Q_{m}$. Since all cubes $2Q_{k}$ with $k\in K_{m}\setminus J$ are of type 1, we obtain
$$\left|\sum_{k\in K_{m}\setminus J} \chi_{m}(x)e^{-i\tilde{\phi}_{m}(x)}m_{\tilde{Q}_{m}}(e^{i\tilde{\phi}_{m}}f)  \nabla \chi_{k}(x)\right|\leq C\sum_{k\in K_{m}\setminus J} R^{-1}_{k} \avert {\tilde{Q}_{m}}|f|$$
$$\leq C N\tilde{R}^{-1}_{m} \avert {\tilde{Q}_{m}}|f|\leq C \avert  {\tilde{Q}_{m}}|Lf|+ V|f|\leq C\alpha,$$
here we used $|Q_{k}|\sim |Q_{m}|$, \eqref{eq:cze'}, the Fefferman-Phong inequality and $4Q_{m}\cap F\neq \emptyset$.
Hence $\| III\|_{\infty} \leq C\alpha$.  

Secondly,
 $$|II|= |\sum_{k\in J, x\in 2Q_{k} }e^{-i\tilde{\phi_{m}}(x)}(m_{2Q_{k}}(e^{i\tilde{\phi}_{m}}f)-m_{\tilde{Q}_{m}}(e^{i\tilde{\phi}_{m}}f)) \nabla \chi_{k}(x) | $$$$\leq  \sum_{k\in J,x\in 2Q_{k} }|m_{2Q_{k}}(e^{i\tilde{\phi_{k}}}f)-m_{\tilde{Q}_{m}}(e^{i\tilde{\phi}_{m}}f)| ||\nabla \chi_{k}||_{\infty}$$$$ \leq C \sum_{k\in J,x\in 2Q_{k} } | m_{2Q_{k}}(e^{i\tilde{\phi}_{m}}f)-m_{\tilde{Q}_{m}}(e^{i\tilde{\phi}_{m}}f)| R_{k}^{-1},$$
since \begin{equation}\label{eq:av'}
| m_{2Q_{k}}(e^{i\tilde{\phi}_{m}}f)-m_{\tilde{Q}_{m}}(e^{i\tilde{\phi}_{m}}f)| \leq C \tilde{R_{m}}\alpha,
\end{equation}
then  
 $$\|II\|_{\infty}\leq C N\alpha.$$
 The proof of \eqref{eq:av'} is detailed in \cite{Aus}.

Finally, we will estimate $I$: 
\begin{align*}e^{-i\phi_{k}(x)}m_{2Q_{k}}(e^{i\phi_{k}}f)-e^{-i\tilde{\phi}_{m}(x)}m_{2Q_{k}}(e^{i\tilde{\phi}_{m}}f)
&=e^{-i\phi_{k}(x)}\avert{2Q_{k}}e^{i\phi_{k}(y)}f(y)\,dy-e^{-i\tilde{\phi}_{m}(x)}\avert{2Q_{k}}e^{i\tilde{\phi}_{m}(y)}f(y)\,dy
\\
&=\avert{2Q_{k}}\big{(}e^{i(\phi_{k}(y)-\phi_{k}(x))}-e^{i(\tilde{\phi}_{m}(y)-\tilde{\phi}_{m}(x))}\big{)}f(y)\,dy.
\end{align*}
Using inequality
 $$|e^{i(\phi_{k}(y)-\phi_{k}(x))}-e^{i(\tilde{\phi}_{m}(y)-\tilde{\phi}_{m}(x))}|\leq  |(\phi_{k}(y)-\phi_{k}(x))-(\tilde{\phi}_{m}(y)-\tilde{\phi}_{m}(x))|,$$
 we obtain$$|I| \leq \sum_{k\in J, x\in 2Q_{k}} R_{k}^{-1} \left| \avert {2Q_{k}} |f(y)||(\phi_{k}(y)-\phi_{k}(x))-(\tilde{\phi}_{m}(y)-\tilde{\phi}_{m}(x))|dy \right|.$$
 By construction, we have
  $$\nabla(\phi_{k}-\tilde{\phi_{m}})=\tilde{\textbf{h}}_{m}-\textbf{h}_{k}$$. We also have, for all $x$ and $y\in 2Q_{k}$
 $$\left|\big{(}\phi_{k}-\tilde{\phi}_{m}\big{)}(y)-\big{(}\phi_{k}-\tilde{\phi}_{m}\big{)}(x)\right|\leq \int^{1}_{0} |x-y| |\big{(}\tilde{\textbf{h}}_{m}-\textbf{h}_{k}\big{)}\big{(} x+t(y-x))|dt$$
 $$\leq C |x-y| R_{k} \avert {2Q_{k}} V\leq C R_{k}^{2} \avert {2Q_{k}} V \leq  C R_{k} \big{(} \avert{2Q_{k}} V^{\frac{1}{2}}\big{)},$$
 here we make use of \eqref{jaugecz'}, the fact that $2Q_{k}$ is of type 2 and $V^{\frac{1}{2}} \in RH_{2}$.

Hence $$|I| \leq \sum_{k\in J,x\in 2Q_{k}} \avert {2Q_{k}} V^{\frac{1}{2}} \avert {2Q_{k}} |f| \leq CN\alpha\, \, \textrm{a.e}.$$

It follows 
\begin{equation}\label{eq:g1'}\|G_{1}\|_{\infty} \leq C \alpha.
\end{equation}
We have
 $L g = (L f) \mathbf{1}_{F}+ G_{1}+G_{2}$ almost everywhere.

Since $| L f| \leq C\alpha$ on $F$, then using estimates \eqref{eq:g1'} and \eqref{eq:g2'}, we obtain
  \begin{equation}\|Lg\|_{\infty} \leq C \alpha.
 \end{equation}
Hence  $$\| L g\|_{2}+\| V^{\frac{1}{2}} g\|_{2} \leq C \alpha^{1-\frac{p}{2}}\big{(}\|Lf \|_{p}+\|V^{\frac{1}{2}}f\|_{p}\big{)}^{p/2},$$
 then estimate \eqref{eq:czb'} holds.
  \end{proof}
  \begin{rem}\label{th:1c'}Note that we did not use the fact that $V$ is the electrical potential of $H$: $V$ can be replaced by any  weight function $\omega$ in $A_{\infty}$:
 
 Suppose $\textbf{a}\in L^{2}_{loc}(\mathbb{R}^{n})^{n}$ and we assume the following condition for any cube  $Q$ in $\mathbb{R}^n$:
\begin{equation}\label{starstar'}  \sup_{Q} |B| \leq C \avert {Q} \omega.
\end{equation}
Then for any $1\leq p<\infty$, there exists a constant $C_{p}>0$, which depends on \eqref{starstar'}, such that
 \begin{equation}\| H(\textbf{a},0)^{\frac{1}{2}}(f)\|_{p}\leq C_{p}\| Lf\|_{p}+ \| \omega^{\frac{1}{2}}f\|_{p},
 \end{equation}
for all $f\in C_{0}^{\infty}(\mathbb{R}^{n}),\textrm{if}\,\, p>1,$ and
 \begin{equation}\label{eq:wt'}|\{x\in \mathbb{R}^n\, ; \,  |H(\textbf{a},0)^{\frac{1}{2}} f(x)| > \alpha\}| \le \frac{C_{1}}{\alpha} \int |L f| + \omega^{\frac{1}{2}}|f|,
\end{equation}
 for any $\alpha>0$ and $f\in C_{0}^{\infty}(\mathbb{R}^{n}),$ if $p=1$.
 \end{rem}

 \section{Estimates for weak solution}\label{sec:weaksol}
 Fix an open set $\Omega$. A subharmonic function on $\Omega$ is a function $v\in L^{1}_{loc}(\Omega)$ such that $\Delta v \ge 0$ in $D'(\Omega)$.

 \begin{lemm}\label{subh}Suppose $\textbf{a}\in  L^{2}_{loc}(\mathbb{R}^{n})^{n}$ and $0 \leq V
\in  L^{1}_{loc}(\mathbb{R}^{n})$. If $u$ is a weak solution of
$H(\textbf{a},V)u=0$ in $\Omega$, then $|u|^{2}$ is a subharmonic function and 
\begin{equation}\Delta  |u|^{2}= 2 |Lu|^{2} + 2 V |u|^{2}.
\end{equation}
\end{lemm}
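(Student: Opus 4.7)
The plan is to establish the distributional identity $\Delta|u|^2 = 2|Lu|^2 + 2V|u|^2$ on $\Omega$; the subharmonicity assertion is then immediate since the right-hand side is pointwise non-negative. The first ingredient is the first-order identity $\partial_k|u|^2 = -2\,\mathrm{Im}\bigl((L_k u)\bar u\bigr)$, which holds in $L^1_{\mathrm{loc}}(\Omega)$. I would derive it by rewriting the definition of $L_k$ as $\partial_k u = i L_k u + i a_k u$ and expanding $\partial_k(u\bar u) = (\partial_k u)\bar u + u\,\partial_k\bar u$: the two $\pm i a_k |u|^2$ contributions cancel. Both sides are in $L^1_{\mathrm{loc}}$ by Cauchy--Schwarz, using $u, L_k u \in L^2_{\mathrm{loc}}(\Omega)$.

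The heart of the proof is to test the weak equation $\mathcal Q(u,\phi)=0$ against $\phi = \psi u$, where $\psi\in C^\infty_c(\Omega)$ is real-valued. The Leibniz rule gives $L_k(\psi u) = \psi L_k u + \tfrac{1}{i}(\partial_k\psi)u$, so that $\sum_k L_k u\,\overline{L_k(\psi u)} = \psi|Lu|^2 + i\sum_k(\partial_k\psi)(L_k u)\bar u$. Taking the real part of $\mathcal Q(u,\psi u)=0$ and substituting $\mathrm{Im}\bigl((L_k u)\bar u\bigr) = -\tfrac12\,\partial_k|u|^2$ from the first-order identity yields
\begin{equation*}
\int\psi\bigl(|Lu|^2+V|u|^2\bigr)\,dx \;=\; -\tfrac12 \int \nabla\psi\cdot\nabla|u|^2\,dx \;=\; \tfrac12 \int\psi\,\Delta|u|^2\,dx,
\end{equation*}
the last step being integration by parts against $\psi$. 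Since $\psi$ is arbitrary, this gives the claimed identity in $\mathcal D'(\Omega)$.

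The main delicate point is justifying that $\phi = \psi u$ is an admissible test function. One checks that $\psi u$ is compactly supported in $\Omega$, lies in $L^2$, and satisfies $V^{1/2}(\psi u)\in L^2$ and $L_k(\psi u)\in L^2$ (the latter via the Leibniz identity together with $u\in W(\Omega)$). Hence $\psi u$ belongs to the closure of $C^\infty_c(\Omega)$ in the local $\mathcal V$-seminorm, and a standard mollification of $u$ on a neighbourhood of $\mathrm{supp}\,\psi$ produces a sequence in $C^\infty_c(\Omega)$ along which $\mathcal Q$ is continuous, legitimizing the use of $\psi u$ in the weak formulation. Everything else in the argument is a routine Leibniz calculation.
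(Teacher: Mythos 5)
Your proof is correct, but it takes a genuinely different route from the paper's. The paper proves the identity by a direct pointwise expansion: it writes $\Delta|u|^2 = 2\,\mathrm{Re}((\Delta u)\bar u) + 2|\nabla u|^2$, solves the equation $H(\textbf{a},V)u=0$ for $\Delta u$ in terms of $i a_k\partial_k u$, $i\partial_k(a_k u)$, $|\textbf{a}|^2 u$ and $Vu$, and substitutes, after which everything recombines algebraically into $2|Lu|^2 + 2V|u|^2$. That argument is essentially a formal computation which, to be rigorous for a weak solution $u\in W(\Omega)$ with $\textbf{a}\in L^2_{loc}$ and $V\in L^1_{loc}$, requires regularization before the second-order differentiations make sense. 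Your argument instead stays entirely at the level of the weak (quadratic form) formulation: you derive the first-order identity $\partial_k|u|^2 = -2\,\mathrm{Im}((L_k u)\bar u)$, test $\mathcal Q(u,\cdot)=0$ against $\psi u$, take the real part, and integrate by parts once. This avoids ever giving pointwise meaning to $\Delta u$, $\nabla\cdot(\textbf{a}u)$ or $|\nabla u|^2$ individually and produces the identity directly in $\mathcal D'(\Omega)$, which is the natural sense in which subharmonicity is asserted. The trade-off is that you must justify $\psi u$ as a legitimate test function by density, and the mollification step is not quite as routine as you indicate — since $\textbf{a}$ is only $L^2_{loc}$ and $u$ only $L^2_{loc}$, showing $L_k(\psi u_\epsilon)\to L_k(\psi u)$ in $L^2$ involves a Friedrichs-type commutator estimate rather than plain convolution continuity; but this is a standard and surmountable point, and both approaches ultimately rely on the same kind of approximation. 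On balance your route is the cleaner one for solutions at this level of regularity.
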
 
\begin{proof}Since
$$\Delta |u|^{2} =\Delta (u \overline{u})=2 Re((\Delta u) \overline{u}) + 2 |\nabla u|^{2}, $$
and
$H(\textbf{a},V)u=0$, then
$$\Delta u = \sum^{n}_{k=1}( i a_{k}\frac{\partial u}{\partial x_{k}}  + i\frac{\partial}{\partial x_{k}}(a_{k}u))+ |\textbf{a}|^{2}u+ Vu.$$
It follows that
\begin{align*}\Delta |u|^{2}&=2 Re\bigg{(}\sum^{n}_{k=1}( i a_{k}\frac{\partial u}{\partial x_{k}}+ i\frac{\partial}{\partial x_{k}}(a_{k}u))\, \overline{u}+ |\textbf{a}|^{2}u\overline{u}+ Vu \overline{u}\bigg{)}+ 2 |\nabla u|^{2} 
\\
&=2 Re\bigg{(}\sum^{n}_{k=1}( i a_{k}\frac{\partial u}{\partial x_{k}} \,\overline{u}+i \frac{\partial}{\partial x_{k}}(a_{k}u)\,\overline{u}\bigg{)}+ 2|\textbf{a}|^{2}|u|^{2}+ 2V|u|^{2}+ 2 |\nabla u|^{2} 
\\
&=2 Re\bigg{(}\sum^{n}_{k=1}( i a_{k}\frac{\partial u}{\partial x_{k}}\,\overline{u}+ i\frac{\partial}{\partial x_{k}}(a_{k}|u|^{2})-i a_{k} u\frac{\partial \overline{u}}{\partial x_{k}}\bigg{)}+ 2|\textbf{a}|^{2}|u|^{2}+ 2V|u|^{2}+ 2 |\nabla u|^{2}
\\
&=4 Im(\textbf{a} \nabla u \overline{u})+ 2|\textbf{a}|^{2}|u|^{2}+ 2 |\nabla u|^{2}+ 2|Vu|^{2}= 2 |Lu|^{2} + 2V|u|^{2} .
\end{align*}

\end{proof}
 \begin{cor}
  Let $Q$ a cube in $\mathbb{R}^n$ and $u$ a weak solution of $H(\textbf{a},V) u =0$ in a neighbourhood of $\overline{2Q}$. For
  $V\ge 0$, we have the following inequality
  \begin{equation}
\label{eq:mvi'}
\sup_{Q} |u| \le C(r,n,\mu) \big(\avert {\mu Q} |u|^r\big)^{1/r},
\end{equation}
for any $0<r<\infty$ and $1<\mu\le 2$. 
\end{cor}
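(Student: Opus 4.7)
The plan is to reduce everything to a classical mean value inequality for nonnegative subharmonic functions and then upgrade the $L^{2}$-mean estimate to an $L^{r}$-mean estimate by a standard self-improvement argument.

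First, I would invoke Lemma \ref{subh}: since $u$ solves $H(\mathbf{a},V)u=0$ in a neighbourhood of $\overline{2Q}$, we have $\Delta|u|^{2}=2|Lu|^{2}+2V|u|^{2}\ge 0$ in $D'$, so $v:=|u|^{2}$ is a nonnegative subharmonic function there. For any concentric dilates $1\le \rho<\sigma\le\mu$ of $Q$ and any $x\in \rho Q$, the ball $B(x,c(\sigma-\rho)R)$ lies in $\sigma Q$ (with $R$ the sidelength of $Q$), so the classical sub-mean value property gives
\begin{equation*}
|u(x)|^{2}=v(x)\le \frac{C}{(\sigma-\rho)^{n}}\,\avert{\sigma Q}v,
\end{equation*}
and hence $\sup_{\rho Q}|u|\le \dfrac{C}{(\sigma-\rho)^{n/2}}\bigl(\avert{\sigma Q}|u|^{2}\bigr)^{1/2}$. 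This handles every $r\ge 2$ immediately by Jensen's inequality, so it remains to treat $0<r<2$.

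For $0<r<2$, I would run the usual Hölder/Young iteration. Write $|u|^{2}=|u|^{r}|u|^{2-r}$ to obtain
\begin{equation*}
\avert{\sigma Q}|u|^{2}\le \bigl(\sup_{\sigma Q}|u|\bigr)^{2-r}\,\avert{\sigma Q}|u|^{r}.
\end{equation*}
Setting $\Phi(t):=\sup_{tQ}|u|$ for $t\in[1,\mu]$, this combines with the previous display to yield
\begin{equation*}
\Phi(\rho)^{2}\le \frac{C}{(\sigma-\rho)^{n}}\,\Phi(\sigma)^{2-r}\,\avert{\mu Q}|u|^{r},
\qquad 1\le \rho<\sigma\le\mu.
\end{equation*}
Applying Young's inequality with conjugate exponents $\frac{2}{2-r}$ and $\frac{2}{r}$ absorbs $\Phi(\sigma)^{2-r}$ into $\tfrac{1}{2}\Phi(\sigma)^{2}$ at the price of a larger constant depending on $(\sigma-\rho)$, giving
\begin{equation*}
\Phi(\rho)^{2}\le \tfrac{1}{2}\Phi(\sigma)^{2}+\frac{C}{(\sigma-\rho)^{2n/r}}\bigl(\avert{\mu Q}|u|^{r}\bigr)^{2/r}.
\end{equation*}

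Finally, I would apply the standard iteration lemma (of Giaquinta--Giusti type): if a bounded nonnegative function $\Phi$ on $[1,\mu]$ satisfies $\Phi(\rho)\le \tfrac{1}{2}\Phi(\sigma)+A(\sigma-\rho)^{-\gamma}+B$ for all $1\le\rho<\sigma\le\mu$, then $\Phi(1)\le C(\gamma)(A(\mu-1)^{-\gamma}+B)$. Applied to $\Phi^{2}$ with $\gamma=2n/r$, this yields $\sup_{Q}|u|\le C(r,n,\mu)\bigl(\avert{\mu Q}|u|^{r}\bigr)^{1/r}$, which is \eqref{eq:mvi'}. The boundedness of $\Phi$ needed to start the iteration follows because $u$ is locally bounded on $\overline{2Q}$ (one may first apply the argument to $\min(|u|,k)^{2}$, or use that $u$ is continuous as a weak solution on a neighbourhood of $\overline{2Q}$ via the $L^{2}$ mean-value bound already obtained and a Moser-type truncation). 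No step is really the ``main obstacle'': the content is entirely in Lemma \ref{subh}; the only care needed is the bootstrapping from $L^{2}$ to $L^{r}$ for small $r$, which is the routine Hölder--Young iteration sketched above.
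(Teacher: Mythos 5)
Your proof is correct and follows the same route the paper implicitly intends: Lemma \ref{subh} gives subharmonicity of $|u|^{2}$, the classical sub-mean value property gives the $r=2$ case, Jensen covers $r>2$, and the standard H\"older--Young iteration with the Giaquinta-type absorption lemma pushes it down to all $r>0$. One small caveat in the parenthetical remark: $\min(|u|,k)^{2}$ is generally \emph{not} subharmonic (minima of subharmonic functions need not be subharmonic), but this is moot since, as you note in the alternative, the subharmonic mean value inequality applied to the $L^{1}_{loc}$ function $|u|^{2}$ already yields local boundedness of $u$ on $\overline{2Q}$ with no truncation needed, so $\Phi$ is finite and the iteration starts.
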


The following technical lemma is interesting in its own right. For a detailed proof see \cite{Buc} and \cite{AB}. It states that a form of the  mean value inequality for subharmonic functions still holds if the Lebesgue measure is replaced 
by a weighted measure of Muckenhoupt type. More precisely, 
\begin{lemm}\label{th:sousharm} 

 	Let  $\omega\in RH_{q}$ for some $1<q\le \infty$ and let  $0<s<\infty$ and $r>q$ (if $q=\infty$, $r=\infty$) such that $\omega\in RH_{r}$.  Then there exists a constant  $C\ge 0$  depending only on $\omega$,$r$,$p$,$s$ and $n$,  such that  for any  cube $Q$  and any nonnegative  subharmonic function  $f$ in  a neighbourhood of $\overline{2Q}$  we have  for all $1< \mu \le 2$, 
$$
\big(\avert{Q} (\omega f^s)^{r} \big)^{1/r} \le C \, \avert{\mu Q} \omega f^s,\,\textrm{for}\, \,r<+\infty.
$$
And
$$
\sup_{Q}f \le \frac{ C}{\avert{Q}\omega} \, \avert{\mu Q} \omega f^s,\,\textrm{for}\,\, r=+\infty.
$$
\end{lemm}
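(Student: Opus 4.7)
The plan is to combine two classical ingredients: the mean value inequality for nonnegative subharmonic functions, stating that for any $t>0$ and any $\mu'\in(1,2]$,
$$\sup_{Q} f \le C(\mu',n,t)\Big(\avert{\mu' Q}f^{t}\Big)^{1/t}$$
(a property of the Laplacian, compare \eqref{eq:mvi'}), together with the self-improvement of the reverse H\"older class and the doubling property of $A_\infty$ weights.

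I first handle the case $r<\infty$. The key reduction step is the \emph{weighted mean value inequality}
$$\sup_{Q} f^{s}\ \le\ \frac{C}{\omega(\mu Q)}\int_{\mu Q} f^{s}\,\omega\,dx,$$
valid for any nonnegative subharmonic $f$ and any $\omega\in A_\infty$. Granted this inequality, the reverse H\"older property $\omega\in RH_r$ gives
$$\Big(\avert{Q}(\omega f^s)^r\Big)^{1/r}\ \le\ \big(\sup_{Q}f^{s}\big)\Big(\avert{Q}\omega^r\Big)^{1/r}\ \le\ C\big(\sup_{Q}f^{s}\big)\avert{Q}\omega,$$
and the doubling property produces $\avert{Q}\omega\le C\avert{\mu Q}\omega$. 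Substituting the weighted mean value estimate closes the argument.

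For $r=\infty$, the hypothesis $\omega\in RH_\infty$ means $\sup_{Q}\omega\le C\avert{Q}\omega$, so
$$\sup_{Q}(\omega f^{s})\ \le\ \sup_{Q}\omega\cdot\sup_{Q}f^{s}\ \le\ C\avert{Q}\omega\cdot\sup_{Q}f^{s}.$$
Applying the weighted mean value inequality to the last factor yields the stated estimate (I read the left-hand side as $\sup_{Q}(\omega f^{s})$, correcting what appears to be a minor typographical slip in the exponent of $f$).

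The main obstacle is the weighted mean value inequality itself. Its unweighted analogue is standard, but upgrading Lebesgue measure to $\omega\,dx$ requires the John--Nirenberg-type self-improvement of $A_\infty$ weights: there exist constants $c,\delta>0$ such that for every cube $R$ and every measurable $E\subset R$,
$$\frac{\omega(E)}{\omega(R)}\ \ge\ c\Big(\frac{|E|}{|R|}\Big)^{1/\delta}.$$
Applying this to the super-level sets $E_\lambda=\{x\in \mu Q:f(x)^{s}>\lambda\}$ and combining with the classical weak-type form of the mean value inequality for $f^{s}$ on a slightly larger cube yields the desired weighted estimate. This argument is carried out in detail in Buckley \cite{Buc} and reused in \cite{AB}, so the cleanest way to finish is to invoke those references once the reduction above has been stated.
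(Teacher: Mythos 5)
Your argument is correct, and it does a bit more than the paper, which disposes of this lemma with a bare citation to Buckley \cite{Buc} and to \cite{AB}. The reduction you propose is exactly the right one: once the weighted mean value inequality $\sup_Q f^s \le C\,\omega(\mu Q)^{-1}\int_{\mu Q}\omega f^s$ is available, pulling $\sup_Q f^s$ out of the $L^r(Q)$ average, applying $\omega\in RH_r$, and using the trivial comparison $\avert{Q}\omega\le\mu^n\avert{\mu Q}\omega$ gives the $r<\infty$ case, while the $r=\infty$ case is the weighted mean value inequality itself up to an evident typographical slip (as printed the left side has degree $1$ in $f$ and the right degree $s$; reading $\sup_Q f$ as $\sup_Q f^s$ makes the two displays consistent with each other, and then the prefactor $1/\avert{Q}\omega$ is just that inequality rearranged). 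One refinement worth noting: you do not need the John--Nirenberg level-set argument you sketch to get the weighted mean value inequality; it follows directly from the $L^t$-form of the mean value inequality for nonnegative subharmonic functions, $\sup_Q f\le C(t,\mu,n)\big(\avert{\mu Q}f^{t}\big)^{1/t}$, valid for every $t>0$. Choose $p$ with $\omega\in A_p$ and take $t=s/p$. H\"older with exponents $p,p'$ together with the defining $A_p$ inequality gives
$$
\avert{\mu Q}f^{t}\ \le\ [\omega]_{A_p}^{1/p}\Big(\frac{1}{\omega(\mu Q)}\int_{\mu Q}f^{s}\,\omega\Big)^{1/p},
$$
and raising the resulting chain to the power $s$ yields $\sup_Q f^s\le C\,\omega(\mu Q)^{-1}\int_{\mu Q}\omega f^s$ with a constant depending only on $n$, $s$, $\mu$ and $[\omega]_{A_p}$. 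This is in any case the same $A_\infty$ input used in \cite{Buc}, so your final citation is appropriate; the only thing to be careful about is that invoking an $A_\infty$-type distributional inequality on super-level sets, as you phrase it, would still need a lower bound on $|\{f^s>c\sup_Q f^s\}\cap\mu Q|$ which is not quite automatic from the weak-type form, whereas the H\"older route avoids the issue entirely.
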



\textbf{Throughout this section we will assume  $V\in RH_{q}$ with $1<q\leq +\infty$ and $B$ satisfies the assumption \eqref{eq:star} and $u$ is a weak solution of $H(\textbf{a},V)u=0$ in $4Q$}. We will establish some local estimates on $|u|$ and $|Lu|$. Using the gauge transform on $4Q$, we can replace 
$\textbf{a}$ and $L$  by $\textbf{h}$ and $\frac{1}{i} \nabla -\textbf{h}$ as defined previously in Lemma \ref{th:jauge'}. 

All the constants are independant of $Q$ and $u$ but they may depend on $V$ and $q$. 
 
  First we give three important results that are the main tools for the proof of Theorem \ref{th:1a'}
 
  \begin{pro}\label{th:csv'}There exists a constant $C>0$ such that
\begin{equation} \label{csv}\big{(} \avert{Q}| V^{\frac{1}{2}} u |^{2q} \big{)}^{\frac{1}{2}q}\leq C \big{(} \avert{3Q}| V^{\frac{1}{2}} u |^{2} \big{)}^{\frac{1}{2}}.
\end{equation}

  \end{pro}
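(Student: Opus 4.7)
The plan is to reduce the statement to a direct application of Lemma \ref{th:sousharm} with the weight $V$ and the subharmonic function $|u|^2$. First I would rewrite both sides in terms of $V|u|^2$, using the identities $|V^{1/2}u|^2 = V|u|^2$ and $|V^{1/2}u|^{2q} = V^q|u|^{2q} = (V|u|^2)^q$. Squaring both sides, the desired inequality becomes
$$
\Bigl(\avert{Q} (V|u|^2)^q\Bigr)^{1/q} \le C \avert{3Q} V|u|^2,
$$
so it suffices to prove this second-moment version.

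Next I would verify the two hypotheses of Lemma \ref{th:sousharm}. On the one hand, since $u$ is a weak solution of $H(\mathbf{a},V)u=0$ in $4Q$, Lemma \ref{subh} gives $\Delta|u|^2 \ge 0$ on $4Q$, so $f:=|u|^2$ is a nonnegative subharmonic function on a neighbourhood of $\overline{3Q}$. On the other hand, $V\in RH_q$ with $q>1$, and since reverse-Hölder classes decrease with the exponent, we have $V\in RH_{q'}$ for every $q'\in(1,q)$. I would therefore apply Lemma \ref{th:sousharm} with the lemma's weight exponent set to some fixed $q'\in(1,q)$ and its target exponent set to $r=q$, which satisfies $r>q'$ as required, taking $s=1$.

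To match the geometric factor $3$ on the right-hand side, I would apply the lemma on the cube $(3/2)Q$ with dilation parameter $\mu=2$, so that $\mu\cdot(3/2)Q = 3Q \subset 4Q$, where subharmonicity holds. This yields
$$
\Bigl(\avert{(3/2)Q} (V|u|^2)^q\Bigr)^{1/q} \le C\,\avert{3Q} V|u|^2,
$$
and then the trivial inclusion $\avert{Q}\phi \le (3/2)^n\,\avert{(3/2)Q}\phi$ for nonnegative $\phi$ transfers the left-hand side onto $Q$ at the cost of a dimensional constant. Taking square roots gives Proposition \ref{th:csv'}.

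There is no real obstacle here: the only delicate point is the bookkeeping between the exponent $q$ appearing in our hypothesis $V\in RH_q$ and the exponent also called $q$ in Lemma \ref{th:sousharm} (which plays the role of the weight's reverse-Hölder exponent, while our $q$ plays the role of the lemma's target exponent $r$); the openness-at-the-bottom property $RH_q\subset RH_{q'}$ for $q'<q$ resolves this cleanly, and the case $q=\infty$ is handled by the second alternative of Lemma \ref{th:sousharm} via the bound $\sup_Q V \le C\avert{Q}V$.
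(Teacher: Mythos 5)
Your proof is correct and follows exactly the same route as the paper, which proves Proposition \ref{th:csv'} in one line by citing Lemma \ref{subh} (subharmonicity of $|u|^2$) and Lemma \ref{th:sousharm} (weighted mean-value inequality for subharmonic functions). Your write-up simply fills in the bookkeeping — the reduction to $(\avert{Q}(V|u|^2)^q)^{1/q}\le C\avert{3Q}V|u|^2$, the choice $s=1$, $f=|u|^2$, $\omega=V$, the downward embedding $RH_q\subset RH_{q'}$ so that $r=q$ satisfies the lemma's $r>q'$ requirement, and the dilation on $(3/2)Q$ to land on $3Q$ — which the paper leaves implicit.
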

  \begin{proof}It is a consequence of Lemma \ref{th:sousharm} and Lemma \ref{subh}.
  \end{proof}
  \begin{pro}
  \label{lemm:5'}  
For all $k>0$, there exists a constant $C$ such that
\begin{equation}\label{wrhL}
\big( \avert { Q}   |L u|^{ q^*} \big)^{1/ q^*} \le   \frac {C} { (1+ R^2\avert {Q}V)^k} \, \big(\avert {3 Q}  |L u|^2 + V|u|^2 \big)^{\frac{1}{2}}.
\end{equation}
\end{pro}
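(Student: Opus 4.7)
The plan has two stages. First, by iterating Caccioppoli against the improved Fefferman-Phong inequality, I will extract arbitrarily large polynomial decay in $R^2\avert{Q}V$ for the $L^2$-average of $u$ (and then of $|Lu|^2+V|u|^2$). Second, an $L^2$ to $L^{q^*}$ reverse H\"older inequality for $|Lu|$ will be established using the subharmonicity of $|u|^2$ and elliptic regularity for the gauged function $\tilde u=e^{i\phi}u$. As a preliminary I apply the gauge transform of Lemma~\ref{th:jauge'} on $4Q$ to replace $\textbf{a}$ by $\textbf{h}$ with $\sup_{4Q}|\textbf{h}|\le CR\avert{Q}V$ and $\sup_{4Q}|\nabla\textbf{h}|\le C\avert{Q}V+CR(\avert{Q}V)^{3/2}$; this is legitimate since $|Lu|$ is gauge invariant and $\avert{4Q}V\sim\avert{Q}V$ by $A_\infty$-doubling.

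For the decay stage the key step is: for every $k>0$,
$$\int_{Q}|u|^2\le \frac{C_k}{(1+R^2\avert{Q}V)^{2k}}\int_{2Q}|u|^2.$$
Pick a large integer $K$ and the nested cubes $Q_j=(1+j/K)Q$, $j=0,\dots,K$. On each pair $(Q_j,Q_{j+1})$, Caccioppoli \eqref{eq:cc} gives the upper bound $\int_{Q_j}|Lu|^2+V|u|^2\le CK^2R^{-2}\int_{Q_{j+1}}|u|^2$, while the improved Fefferman-Phong inequality \eqref{eq:FP} on $Q_j$ gives the lower bound $\int_{Q_j}|Lu|^2+V|u|^2\ge Cm_\beta(R^2\avert{Q_j}V)R^{-2}\int_{Q_j}|u|^2$. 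Doubling of $V$ makes $\avert{Q_j}V\sim\avert{Q}V$ uniformly in $j$, so dividing yields the one-step bound $\int_{Q_j}|u|^2\le C(K)\,m_\beta(R^2\avert{Q}V)^{-1}\int_{Q_{j+1}}|u|^2$. Iterating $K$ times from $Q_0=Q$ to $Q_K=2Q$, in the regime $R^2\avert{Q}V\ge 1$ one accumulates a factor $\le C(K)^K(R^2\avert{Q}V)^{-K\beta}$, and choosing $K\beta\ge 2k$ settles that case; the complementary regime is trivial by inclusion. Caccioppoli on $(Q,2Q)$ followed by this decay on $(2Q,3Q)$ and Fefferman-Phong on $3Q$ (which for $R^2\avert{Q}V\ge 1$ lets one replace $R^{-2}\avert{3Q}|u|^2$ by $C\avert{3Q}(|Lu|^2+V|u|^2)$) then upgrades the statement to
$$\avert{Q}(|Lu|^2+V|u|^2)\le \frac{C_k}{(1+R^2\avert{Q}V)^{2k}}\avert{3Q}(|Lu|^2+V|u|^2).$$

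It remains to pass from $L^2$ to $L^{q^*}$ on the left; chaining a reverse H\"older inequality
$$\Bigl(\avert{Q}|Lu|^{q^*}\Bigr)^{1/q^*}\le C\Bigl(\avert{2Q}|Lu|^2+V|u|^2\Bigr)^{1/2}$$
with the above decay on $(2Q,3Q)$ yields \eqref{wrhL}. To prove this reverse H\"older, I note that $\tilde u=e^{i\phi}u$ satisfies the scalar equation
$$-\Delta\tilde u=2i\,\textbf{h}\cdot\nabla\tilde u-(|\textbf{h}|^2+i\,\nabla\cdot\textbf{h}+V)\tilde u\quad\text{in }4Q.$$
Subharmonicity of $|u|^2$ (Lemma~\ref{subh}) combined with Lemma~\ref{th:sousharm} gives $\|u\|_{L^\infty(Q)}\le C(\avert{2Q}|u|^2)^{1/2}$, Proposition~\ref{th:csv'} provides higher integrability for $V^{1/2}u$, and hypothesis~\eqref{eq:star} together with Lemma~\ref{th:jauge'} bounds $|\textbf{h}|$ and $|\nabla\textbf{h}|$ pointwise. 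Interior $L^q$ Calder\'on-Zygmund regularity for $-\Delta$ (with a short bootstrap when $q>2$), followed by Sobolev embedding $W^{1,q}\hookrightarrow L^{q^*}$, produces $|\nabla\tilde u|\in L^{q^*}(Q)$; the pointwise bound $|Lu|\le|\nabla\tilde u|+|\textbf{h}||u|$ concludes the step.

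The principal obstacle is this reverse H\"older: the magnetic terms $|\textbf{h}|^2\tilde u$, $|\nabla\textbf{h}|\tilde u$ and $\textbf{h}\cdot\nabla\tilde u$ introduced by gauging carry $L^\infty$ norms that grow as powers of $R\avert{Q}V$, and they must be absorbed without destroying the clean $(1+R^2\avert{Q}V)^{-k}$ decay obtained in the first stage. Hypothesis~\eqref{eq:star}, which controls both $|B|$ and $|\nabla B|$ by the appropriate powers of $\avert{Q}V$, is precisely tailored to make these magnetic contributions commensurate with the $V$-term, so that the Fefferman-Phong gain absorbs them.
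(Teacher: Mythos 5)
Your two-stage scheme --- first pump arbitrarily fast polynomial decay in $R^2\avert{Q}V$ out of iterated Caccioppoli/Fefferman--Phong (this is precisely Lemma~\ref{lemm:1'}), then upgrade $L^2$ to $L^{q^*}$ via elliptic regularity for $-\Delta$ --- is the same as the paper's. The difference in the second stage (applying Calder\'on--Zygmund regularity and Sobolev embedding directly to $\tilde u = e^{i\phi}u$, versus the paper's decomposition $u=v+w$ with $v$ harmonic in $2Q$ and $w$ treated through the fundamental solution $\Gamma_0$ and Hardy--Littlewood--Sobolev) is largely cosmetic: $|\nabla\Gamma_0|\sim|x|^{1-n}$ with HLS is exactly the potential-theoretic form of the Sobolev gain, and the paper's split is mainly a device to get a free pointwise bound on $\nabla v$ by harmonicity and clean boundary control of $w$ via Poincar\'e (since $w=0$ on $\partial(2Q)$).

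The genuine gap is the reverse H\"older step. You assert
$$\Bigl(\avert{Q}|Lu|^{q^*}\Bigr)^{1/q^*}\le C\Bigl(\avert{2Q}|Lu|^2+V|u|^2\Bigr)^{1/2}$$
with a uniform constant $C$. The bootstrap you describe does not yield this: each pass of Calder\'on--Zygmund plus Sobolev must reinsert the magnetic terms, whose $L^\infty$ norms grow like powers of $R\avert{Q}V$, so what one actually establishes is the estimate with a constant $\phi(R^2\avert{Q}V)$ that grows \emph{polynomially} --- this is exactly the $\phi$ appearing in display~\eqref{2} of the paper, obtained by explicitly iterating the inequality at the end of the ``Estimate of $w$'' step. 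Your final paragraph correctly identifies this as ``the principal obstacle,'' but it does not resolve it: hypothesis~\eqref{eq:star} only ensures the magnetic terms stay of the order of the $V$-terms, it does not remove the polynomial factor. The absorption must come from the first stage, by choosing $k$ larger than the degree of $\phi$ in the chained estimate. To close the argument you need to (i) replace the clean constant $C$ by $\phi(R^2\avert{Q}V)$, (ii) carry out the bootstrap far enough to bound $\phi$ polynomially (this is what the iteration in the paper's proof of \eqref{2} does), and (iii) explicitly multiply $\phi(R^2\avert{Q}V)$ against $(1+R^2\avert{Q}V)^{-k}$ from the decay stage. A small side remark: the sign of the $\textbf{h}\cdot\nabla\tilde u$ term in your equation for $-\Delta\tilde u$ should be $-2i$; this does not affect the estimates.
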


\begin{pro}
\label{lemm:6'}  
Let $1< \mu\le 4$, if  $n/2 \le q <n$ then there exists a constant $C$ such that
\begin{equation}\label{wrhL2}
\big( \avert { Q}   |L u|^{ q^*} \big)^{1/ q^*} \le  C \, \big(\avert {\mu Q}   |L u|^{2} \big)^{\frac{1}{2}}, 
\end{equation}
If $q \ge n$ then there exists a constant $C$ such that
\begin{equation}\label{wrhL2'}
\sup_{ Q}   |L u|  \le  C \, \big(\avert {\mu Q}   |L u|^{2} \big)^{\frac{1}{2}}. 
\end{equation}
\end{pro}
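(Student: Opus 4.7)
The plan is to start from the reverse Hölder estimate of Proposition \ref{lemm:5'} and then absorb the unwanted $V|u|^2$ term on the right-hand side into $|Lu|^2$ on a slightly larger cube, paying a price that grows polynomially in $R^2\avert{Q}V$ but can be chosen to be beaten by the damping factor $(1+R^2\avert{Q}V)^{-k}$ for $k$ chosen large enough. Throughout I will work after the gauge transform of Lemma \ref{th:jauge'} on $4Q$, so that $\mathbf{a}$ is replaced by $\mathbf{h}$ with $|\mathbf{h}|\le CR\sup_{4Q}|B|\le CR\avert{Q}V$, and the operator $L$ by $\tilde L=\tfrac{1}{i}\nabla-\mathbf{h}$, noting that the claimed estimate is gauge-invariant.

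First I would fix a chain of cubes $Q\subset 3Q\subset\widetilde Q\subset\mu Q\subset 4Q$, apply Proposition \ref{lemm:5'} on $Q$ with a large integer $k$ to be chosen, and then use the Caccioppoli inequality \eqref{eq:cc} on $3Q$ (with $\widetilde Q$ as the enlargement, which fits inside $4Q$) to obtain
\[
\Big(\avert{Q}|Lu|^{q^{*}}\Big)^{1/q^{*}}\le\frac{C}{(1+R^{2}\avert{Q}V)^{k}}\Big(\frac{1}{R^{2}}\avert{\widetilde Q}|u|^{2}\Big)^{1/2}.
\]
The problem then reduces to bounding $R^{-2}\avert{\widetilde Q}|u|^{2}$ by $(1+R^{2}\avert{Q}V)^{2k}\avert{\mu Q}|Lu|^{2}$.

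For this reduction I would combine three ingredients. Lemma \ref{subh} says that $|u|^{2}$ is subharmonic in $4Q$, so the mean-value inequality \eqref{eq:mvi'} gives $\sup_{\widetilde Q}|u|^{2}\le C\avert{\mu Q}|u|^{2}$. Next, the improved Fefferman--Phong inequality \eqref{eq:FP} applied to $|u|$ on $\mu Q$ yields
\[
\frac{1}{R^{2}}\avert{\mu Q}|u|^{2}\le \frac{C}{m_{\beta}(R^{2}\avert{\mu Q}V)}\avert{\mu Q}\bigl(|Lu|^{2}+V|u|^{2}\bigr).
\]
To eliminate the residual $V|u|^{2}$ on the right, I would use the doubling of $V$ (from $V\in A_{\infty}$) together with $\sup_{\mu Q}|u|^{2}\le C\avert{\mu' Q}|u|^{2}$ for a slightly larger cube and iterate the scheme Caccioppoli $\to$ mean-value $\to$ Fefferman--Phong a bounded number of times; each iteration costs at most a factor $(1+R^{2}\avert{Q}V)^{O(1)}$. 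Choosing $k$ larger than the total exponent thus produced completes the case $n/2\le q<n$.

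For the case $q\ge n$ the target is an $L^{\infty}$ bound on $|Lu|$, which I would reach by applying the weighted subharmonic mean-value result of Lemma \ref{th:sousharm} (the $r=\infty$ case, available because $V\in RH_{n}\supset RH_{q}$) with weight $\omega=V$ to $|u|^{2}$, yielding $\sup_{Q}|u|^{2}\le \tfrac{C}{\avert{Q}V}\avert{\mu Q}V|u|^{2s}$ for suitable $s$; combining this with Step~3 promotes the $L^{q^{*}}$-type bound to the pointwise bound \eqref{wrhL2'}. The main obstacle is the iteration in the previous paragraph: one must track quite carefully the polynomial growth in $R^{2}\avert{Q}V$ produced by repeated use of Caccioppoli, subharmonic mean value, and Fefferman--Phong, in order to guarantee that a single value of $k$ (depending only on $n$, $q$ and the $A_{\infty}$/$RH_{q}$ constants of $V$) suffices.
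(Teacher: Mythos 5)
The core difficulty is to remove the $V|u|^{2}$ term from the right-hand side of Proposition \ref{lemm:5'}, and your proposed iteration (Caccioppoli $\to$ mean-value $\to$ Fefferman--Phong) cannot do this: it is a closed loop. Caccioppoli sends $\avert{3Q}(|Lu|^{2}+V|u|^{2})$ to $R^{-2}\avert{\widetilde Q}|u|^{2}$, the mean-value inequality keeps this shape, and Fefferman--Phong sends it back to $\big(m_{\beta}(R^{2}\avert{\mu Q}V)\big)^{-1}\avert{\mu Q}(|Lu|^{2}+V|u|^{2})$ --- the full sum again, with $V|u|^{2}$ intact. Moreover when $R^{2}\avert{Q}V\le 1$ the factor $m_{\beta}^{-1}$ is large while the damping from Proposition \ref{lemm:5'} is $\sim 1$, so the scheme diverges in that regime. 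This cycle is precisely what underlies Lemma \ref{lemm:1'}; it never produces Proposition \ref{lemm:6'}. The missing ingredient --- which also explains the hypothesis $q\ge n/2$ that your proposal never uses --- is Lemma \ref{lemm:3'}, which bounds $(R\avert{Q}V)^{2}\avert{Q}|u|^{2}$ by $(1+R^{2}\avert{Q}V)^{-k}\big(\avert{\mu Q}|Lu|^{p}\big)^{2/p}$ for $n<p<q^{*}$ via Morrey's embedding (whence one needs $q^{*}>n$, i.e.\ $q>n/2$). The paper's proof re-runs the harmonic decomposition $u=v+w$ of Proposition \ref{lemm:5'} replacing every use of Lemma \ref{lemm:2'} by Lemma \ref{lemm:3'} (the two lemma labels in the paper's one-line proof are transposed); this trades all $V|u|^{2}$ contributions for $|Lu|^{p}$ ones, and the exponent $p$ is then lowered to $2$ by the Iwaniec--Nolder self-improvement of weak reverse H\"older inequalities quoted in Remark \ref{rem6}, a step your proposal also does not address.

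A secondary issue: in the case $q\ge n$ you invoke the $r=\infty$ case of Lemma \ref{th:sousharm} with $\omega=V$, but that case requires $\omega\in RH_{\infty}$, whereas $V\in RH_{q}$ with $q\ge n$ finite does not give $V\in RH_{\infty}$. The $L^{\infty}$ bound on $|Lu|$ in that case instead comes from the $w$-part of the harmonic decomposition, where the Riesz potential $\int V(y)|u(y)|\,|x-y|^{1-n}\,dy$ is $L^{\infty}$-bounded once $V\in L^{q}_{loc}$ with $q\ge n$.
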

\begin{rem}\label{rem6}Using Theorem 2 of \cite{IN}, we can replace $2$ by $\delta \in ]0,2]$ in \eqref{wrhL2} and \eqref{wrhL2'}.
\end{rem}

We need the following results to prove propositions \ref{lemm:5'} and \ref{lemm:6'}:
  \begin{lemm}
  \label{lemm:1'}
Let $1 \le \mu < \mu' \le 4$ and $k>0$, then there exists a constant $C$ such that
$$ 
\avert {\mu Q}|u|^2 \le \frac {C} { (1+ R^2\avert {Q}V)^k} \big(\avert {\mu' Q} |u|^2\big),
$$
and
$$ 
\avert {\mu Q} |L u|^2 + V|u|^2 \le \frac {C} { (1+ R^2\avert {Q}V)^k} \big(\avert {\mu' Q} |L u|^2 + V|u|^2 \big).
$$
\end{lemm}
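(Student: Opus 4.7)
The plan is to combine the Caccioppoli inequality \eqref{eq:cc} (applied with $f=0$) with the improved Fefferman-Phong inequality of Lemma \ref{FP} along a chain of intermediate cubes between $\mu Q$ and $\mu' Q$, iterating enough times to absorb the power $k$. Set $a:=R^2 \avert{Q}V$. When $a\le 1$, $(1+a)^k\le 2^k$, so both inequalities reduce to the trivial ones coming from $\mu Q\subset \mu'Q$ and $|\mu Q|\sim|\mu'Q|$ (for the first, use subharmonicity of $|u|^2$ from Lemma \ref{subh}). So I may assume $a>1$, in which case $m_\beta(a)=a^\beta$ and $a\sim 1+a$. Throughout, the fact that $V\in A_\infty$ is doubling lets me replace $\avert{\mu_i Q}V$ by $\avert{Q}V$ up to constants, and the $\mu_i$'s remain comparable to $1$, so the key quantities $m_\beta((\mu_i R)^2\avert{\mu_i Q}V)$ all stay comparable to $m_\beta(a)$.

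For the first inequality, I would choose $N$ intermediate radii $\mu=\mu_0<\mu_1<\cdots<\mu_N=\mu'$. For each $i$, Caccioppoli with a cutoff between $\mu_i Q$ and $\mu_{i+1}Q$ (noting $H(\mathbf a,V)u=0$) gives
\begin{equation*}
\int_{\mu_i Q}|Lu|^2+V|u|^2 \le \frac{C}{R^2(\mu_{i+1}-\mu_i)^2}\int_{\mu_{i+1}Q}|u|^2,
\end{equation*}
while Fefferman-Phong on $\mu_i Q$ yields the lower bound $\frac{Cm_\beta(a)}{R^2}\int_{\mu_i Q}|u|^2$. Combining, one step produces the gain
\begin{equation*}
\int_{\mu_i Q}|u|^2\le \frac{C}{m_\beta(a)}\int_{\mu_{i+1}Q}|u|^2.
\end{equation*}
Iterating $N$ times and choosing $N$ so that $N\beta\ge k$ gives $\int_{\mu Q}|u|^2\le C_N a^{-N\beta}\int_{\mu'Q}|u|^2\le C'(1+a)^{-k}\int_{\mu'Q}|u|^2$, and dividing by $|\mu Q|\sim |\mu'Q|$ finishes the first estimate.

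For the second inequality, I would chain one Caccioppoli step, one application of the first inequality (with a slightly larger power $k-\beta$), and one Fefferman-Phong step. Picking $\mu<\mu_1<\mu_2<\mu'$,
\begin{equation*}
\int_{\mu Q}|Lu|^2+V|u|^2 \le \frac{C}{R^2}\int_{\mu_1 Q}|u|^2 \le \frac{C}{R^2(1+a)^{k-\beta}}\int_{\mu_2 Q}|u|^2 \le \frac{C}{(1+a)^{k-\beta}\,m_\beta(a)}\int_{\mu'Q}|Lu|^2+V|u|^2,
\end{equation*}
where the last inequality uses Fefferman-Phong in the form $\int_{\mu_2 Q}|u|^2\le CR^2 m_\beta(a)^{-1}\int_{\mu_2 Q}|Lu|^2+V|u|^2$ and then $\mu_2 Q\subset \mu'Q$. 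Since $m_\beta(a)=a^\beta\sim (1+a)^\beta$ for $a\ge 1$, the prefactor is bounded by $C(1+a)^{-k}$, and normalizing by $|\mu Q|\sim|\mu'Q|$ gives the claim.

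The only real obstacle is bookkeeping: ensuring at each step that the substitutions $\avert{\mu_i Q}V\sim \avert{Q}V$ and $\mu_i\sim 1$ are uniform in $i$ so the constants do not blow up with $N$, and that the number of iterations $N$ may be chosen depending only on $k,\beta$ (hence only on $V$ and the dimension). Once this is handled, the gain per step is genuinely a constant less than $1$ whenever $a$ is above a fixed threshold, and the small-$a$ regime is trivial, so the two estimates follow.
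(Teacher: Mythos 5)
Your proof is correct and follows essentially the same route the paper indicates (it explicitly says the proof of this lemma is based on the Caccioppoli inequality and the improved Fefferman--Phong inequality, just as in Lemma 8.1 of \cite{AB}). The only cosmetic slip is calling $k-\beta$ a ``slightly larger'' power in the second chain; using power $k$ there (overshooting, since the Fefferman--Phong step contributes an extra $\beta$) is cleaner and also avoids any worry about $k\le\beta$, but your argument as written is sound once one reads ``smaller.''
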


The proof is analogous to that of Lemma 8.1 in \cite{AB}, it is based on Caccioppoli \eqref{eq:cc} type inequality and the improved Fefferman-Phong inequality \eqref{eq:FP}.

\begin{lemm}
\label{lemm:2'}
For any $1<\mu \le 4$ and $k>0$, there exists a constant $C$ such that 
$$
(R\avert {Q}V)^2 \, \avert {Q} |u|^2 \le \frac {C} { (1+ R^2\avert {Q}V)^k}  \big(\avert {\mu Q}   V|u|^2 \big).
$$ 
\end{lemm}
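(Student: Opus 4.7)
The inequality amounts to showing that the decay factor $(1+R^2\avert{Q}V)^{-k}\avert{\mu Q}V|u|^2$ controls the full polynomial $R^2(\avert{Q}V)^2\avert{Q}|u|^2$. The natural strategy is to combine Lemma \ref{lemm:1'} (which governs the decay of $\avert{Q}|u|^2$) with a lower bound on $\avert{\mu Q}V|u|^2$ obtained from the improved Fefferman--Phong inequality \eqref{eq:FP}.

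First, I apply Lemma \ref{lemm:1'} with exponent $k+1$ on two nested scales $Q\subset \mu_0 Q\subset \mu Q$ with $1<\mu_0<\mu$:
$$
\avert{Q}|u|^2 \le \frac{C}{(1+R^2\avert{Q}V)^{k+1}}\,\avert{\mu_0 Q}|u|^2.
$$
Multiplying through by $(R\avert{Q}V)^2(1+R^2\avert{Q}V)^k$ and using the elementary bound $R^2\avert{Q}V \le 1+R^2\avert{Q}V$ gives
$$
(R\avert{Q}V)^2(1+R^2\avert{Q}V)^k\,\avert{Q}|u|^2 \le C\,\avert{Q}V \cdot \avert{\mu_0 Q}|u|^2,
$$
so the task reduces to proving $\avert{Q}V \cdot \avert{\mu_0 Q}|u|^2 \le C\,\avert{\mu Q}V|u|^2$.

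To derive this, I combine Fefferman--Phong \eqref{eq:FP} applied on $\mu Q$ with Caccioppoli \eqref{eq:cc} between $\mu Q$ and a slightly larger cube $\mu' Q$. Using the subharmonic doubling $\avert{\mu' Q}|u|^2 \le C\avert{\mu Q}|u|^2$ (a consequence of Lemma \ref{th:sousharm} with $\omega\equiv 1$ applied to $|u|^2$) to absorb the Caccioppoli term, one obtains, in the regime where $R^2\avert{Q}V$ is larger than a fixed constant $T_0$, a lower bound of the shape
$$
\avert{\mu Q}V|u|^2 \ge c\,\frac{m_\beta(R^2\avert{Q}V)}{R^2}\,\avert{\mu Q}|u|^2 \ge c\,\frac{(R^2\avert{Q}V)^\beta}{R^2}\,\avert{\mu Q}|u|^2.
$$
Combined with the $A_\infty$ doubling $\avert{Q}V \le C\avert{\mu Q}V$ and with enough room in the exponent coming from the first step (choose the exponent in Lemma \ref{lemm:1'} large enough so that the polynomial $R^2(\avert{Q}V)^2(1+R^2\avert{Q}V)^k$ is swallowed by the power $(R^2\avert{Q}V)^\beta$), this settles the large regime.

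In the complementary regime $R^2\avert{Q}V \le T_0$, the factor $(1+R^2\avert{Q}V)^k$ is bounded, and the desired inequality reduces to $\avert{Q}V \cdot \avert{\mu_0 Q}|u|^2 \le C\,\avert{\mu Q}V|u|^2$. Here I would invoke Lemma \ref{th:sousharm} applied to the subharmonic function $|u|^2$ with weight $\omega=V\in RH_r$ (for some $r>q$ with $V\in RH_r$, possible by the self-improvement of the $RH$ class), which gives the reverse-H\"older bound $\big(\avert{\mu_0 Q}(V|u|^2)^r\big)^{1/r}\le C\avert{\mu Q}V|u|^2$, and combine it with the Moser-type estimate $\sup_{\mu_0 Q}|u|^2\le C\avert{\mu Q}|u|^2$ (same lemma, $\omega\equiv 1$) and with the doubling of $V$. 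The main obstacle is precisely this small-$R^2\avert{Q}V$ regime: the inequality is not a consequence of elementary weight properties alone (it would fail for generic $V$ and $|u|^2$ with disjoint supports), so the argument must genuinely use that $u$ solves $H(\textbf{a},V)u=0$ through the subharmonicity identity of Lemma \ref{subh} together with the RH structure of $V$. Bookkeeping the exponents so that the powers in the two regimes match up produces the claimed estimate for every prescribed $k>0$.
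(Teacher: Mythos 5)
Your reduction via Lemma \ref{lemm:1'} is exactly right and matches the first step of the paper's proof: multiplying $\avert{Q}|u|^2 \le C(1+R^2\avert{Q}V)^{-K}\avert{\mu_0 Q}|u|^2$ by $(R\avert{Q}V)^2 = (R^2\avert{Q}V)\avert{Q}V$ and absorbing a power of $R^2\avert{Q}V$ into the denominator reduces the lemma to showing
\[
\avert{Q}V\;\avert{\mu_0 Q}|u|^2 \;\le\; C\,\avert{\mu Q} V|u|^2.
\]
Where you then diverge from the paper and leave a genuine gap is in proving this remaining decoupling inequality. You split into regimes according to the size of $R^2\avert{Q}V$, using Fefferman--Phong plus Caccioppoli in the large regime, and then propose, in the small regime, to combine the $r<\infty$ reverse-H\"older form of Lemma \ref{th:sousharm} applied to $V|u|^2$ with the \emph{unweighted} Moser bound $\sup_{\mu_0 Q}|u|^2 \le C\avert{\mu Q}|u|^2$. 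That combination does not close: it compares $L^r$- and $L^1$-averages of the single product $V|u|^2$, but gives no way to split $\avert{Q}V\cdot \avert{\mu_0 Q}|u|^2$ against $\avert{\mu Q}V|u|^2$. You correctly flag this yourself as ``the main obstacle,'' but the proposal does not resolve it, and the bookkeeping you appeal to cannot change that.

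The tool you are missing is precisely the \emph{sup form} of Lemma \ref{th:sousharm} applied with the weight $\omega=V$ (not with $\omega\equiv 1$, and not its $r<\infty$ reverse-H\"older form): for $|u|^2$ nonnegative and subharmonic (Lemma \ref{subh}),
\[
\sup_{\mu_0 Q}|u|^2 \;\le\; \frac{C}{\avert{\mu_0 Q} V}\,\avert{\mu Q} V|u|^2.
\]
This weighted mean-value inequality is exactly what decouples the product. Combined with $\avert{\mu_0 Q}|u|^2 \le \sup_{\mu_0 Q}|u|^2$ and the doubling $\avert{Q}V \le C\avert{\mu_0 Q}V$ (both uniform in $R^2\avert{Q}V$), one gets
\[
\avert{Q}V\;\avert{\mu_0 Q}|u|^2 \;\le\; C\,\avert{\mu_0 Q}V\;\sup_{\mu_0 Q}|u|^2 \;\le\; C\,\avert{\mu Q}V|u|^2
\]
directly, with no case split. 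This is the paper's three-line chain, and it makes the large/small dichotomy and the Fefferman--Phong/Caccioppoli detour unnecessary. Your intermediate step $\avert{Q}V \le C\avert{\mu Q}V$ and use of subharmonicity are fine; the gap is solely in choosing the wrong form of the weighted mean-value lemma at the crucial point.
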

\begin{proof} Using Lemma \ref{lemm:1'} with $k>1$ and $1<\mu' <\mu$ and subsequently Lemma \ref{th:sousharm}, we have:
$$
(R \avert{Q} V)^2 \avert{ Q}|u|^2 \le \frac {C  \avert{Q}V\,\avert{ \mu' Q}|u|^2} { (1+ R^2\avert{Q}V)^{k-1}}  \le \frac {C  \avert{\mu' Q}V\,\sup_{ \mu' Q}|u|^2} { (1+ R^2\avert{Q}V)^{k-1}}   \le \frac {C  \avert{\mu Q} (V|u|^2)} { (1+ R^2\avert{Q}V)^{k-1}}  .
$$
 
 \end{proof}
\begin{lemm}
\label{lemm:3'}
For any $1<\mu\le 2$, $k>0$  and $n <p<\infty$, there exists a constant $C$ such that
$$
(R\avert {Q}V)^2 \, \avert {Q} |u|^2 \le  \frac {C} { (1+ R^2\avert {Q}V)^k}  \big( \avert {\mu Q}   |L u|^p \big)^{2/p}.
$$
\end{lemm}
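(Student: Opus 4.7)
The strategy is to combine Lemma \ref{lemm:2'}, the Fefferman-Phong inequality \eqref{eq:FP}, and a magnetic Caccioppoli-H\"older identity obtained by testing $Hu=0$ against $\bar u\chi^2$ for a real cutoff $\chi$, splitting the analysis according to whether $A:=R^2\avert{Q}V$ is $\ge 1$ or $<1$. For $\chi\in C_0^\infty(\mu_2 Q)$ with $\chi\equiv 1$ on $\mu_1 Q$ and $|\nabla\chi|\le C/R$ (for fixed $1<\mu_1<\mu_2<\mu$), taking the real part of $\int Hu\,\overline{u\chi^2}=0$ gives $\int(|Lu|^2+V|u|^2)\chi^2 = 2\,\mathrm{Im}\int Lu\,\bar u\,\chi\,\nabla\chi$, and bounding the right-hand side by H\"older with exponent $p>n$ (conjugate $p'<2$) together with Jensen yields the key estimate
$$\avert{\mu_1 Q}\bigl(|Lu|^2 + V|u|^2\bigr) \le \frac{C}{R}\bigl(\avert{\mu_2 Q}|Lu|^p\bigr)^{1/p}\bigl(\avert{\mu_2 Q}|u|^2\bigr)^{1/2}.$$
Since $|u|^2$ is subharmonic (Lemma \ref{subh}), Lemma \ref{th:sousharm} places $|u|^2$ in $A_\infty$, so $|u|^2\,dx$ is doubling and one has $\avert{\mu' Q}|u|^2 \le C\avert{Q}|u|^2 =: CG$ and $\bigl(\avert{\mu' Q}|Lu|^p\bigr)^{1/p} \le CH^{1/2}$ for any $\mu'\le \mu$, with $H:=\bigl(\avert{\mu Q}|Lu|^p\bigr)^{2/p}$.

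For $A\ge 1$, feeding the key estimate into Lemma \ref{lemm:2'} with an exponent $k'\ge k/2$ gives $(R\avert{Q}V)^2 G \le C(1+A)^{-k'}R^{-1}H^{1/2}G^{1/2}$. A standard bootstrap (divide by $G^{1/2}$, square, divide by $(R\avert{Q}V)^2$) then yields
$$(R\avert{Q}V)^2 G \le \frac{C}{A^2(1+A)^{2k'}}\,H,$$
and the factor $A^{-2}(1+A)^{-2k'}$ is majorized by $C(1+A)^{-k}$ on $A\ge 1$.

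For $A<1$, the Fefferman-Phong inequality \eqref{eq:FP} with weight $V$ and exponent $2$, applied on $\mu_3 Q$ for some $1<\mu_3<\mu$, gives $AR^{-2}G \le C\avert{\mu_3 Q}(|Lu|^2 + V|u|^2)$ (using $m_\beta(\mu_3^2 A)\ge cA$ for $A<1$, since $A^\beta\ge A$ when $A\le 1$ and the doubling of $V$). Applying the key estimate on the pair $(\mu_3 Q,\mu_4 Q)$ with $\mu_3<\mu_4<\mu$ yields $\avert{\mu_3 Q}(|Lu|^2+V|u|^2)\le CR^{-1}H^{1/2}G^{1/2}$, so $AR^{-2}G \le CR^{-1}H^{1/2}G^{1/2}$. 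Dividing by $G^{1/2}$ and squaring produces $A^2 R^{-2}G \le CH$, i.e.\ $(R\avert{Q}V)^2 G \le CH$, which suffices since $(1+A)^{-k}\ge 2^{-k}$ on $A<1$.

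The main obstacle is bridging the two regimes: the large-$A$ bootstrap degenerates as $A\to 0$ because the $A^{-2}$ factor blows up, while the Fefferman-Phong track in the small-$A$ regime has its own $A^{-2}$ blowup that is precisely cancelled by the $A^2$ appearing in $(R\avert{Q}V)^2$. The doubling of $|u|^2\,dx$---a consequence of Lemma \ref{subh} plus Lemma \ref{th:sousharm}---is the crucial unifying observation that allows both tracks to land on the common right-hand side $H=(\avert{\mu Q}|Lu|^p)^{2/p}$.
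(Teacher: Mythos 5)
Your proposal breaks down at the step where you replace $\avert{\mu'Q}|u|^2$ by $C\avert{Q}|u|^2$ (and similarly $\avert{\mu_2 Q}|u|^2$ by $CG$), justified by the claim that Lemma \ref{th:sousharm} puts $|u|^2$ in $A_\infty$, hence makes $|u|^2\,dx$ doubling. That claim is false. Lemma \ref{th:sousharm} gives a mean-value inequality $\big(\avert{Q}(\omega f^s)^r\big)^{1/r}\le C\avert{\mu Q}\omega f^s$ for a nonnegative subharmonic $f$ against an $A_\infty$ weight $\omega$, with the \emph{larger} cube on the right; it says nothing about $|u|^2$ itself being an $A_\infty$ weight, and in any case the constant in your doubling would have to be uniform in the weak solution $u$, which it cannot be. Concretely, $f(x)=\max(0,|x|-\rho)$ with $Q\subset B_\rho\subset 2Q$ is convex (hence subharmonic), vanishes on $Q$ and is positive on $2Q\setminus Q$, so no constant $C$ gives $\avert{2Q}f\le C\avert{Q}f$; or take $u$ harmonic of high degree and watch the doubling constant of $|u|^2$ blow up. Since this false step is what closes the self-improving estimate in both your $A\ge 1$ and $A<1$ regimes, the argument cannot be patched without changing the strategy.

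What the paper does instead --- and what makes the hypothesis $p>n$ essential rather than a convenience for getting $p'<2$ --- is invoke Morrey's embedding together with the diamagnetic inequality $|\nabla|u||\le|Lu|$ to obtain H\"older continuity of $|u|$ on $\mu Q$: $\left||u(x)|-|u(y)|\right|\le C(|x-y|/R)^{1-n/p}\,R\,Y^{1/2}$ with $Y=\big(\avert{\mu Q}|Lu|^p\big)^{2/p}$. Taking $y\in\bar Q$ to minimize $|u|$, this lets one bound $Z:=\avert{Q}V\int|u|^2\eta^2$ by $CX+C(\avert{Q}V)R^2Y|\mu Q|$, where $X:=(R^2\avert{Q}V)\int V|u|^2\eta^2$, replacing the false doubling of $|u|^2$ by a genuine pointwise oscillation bound and closing the Caccioppoli estimate $X\lesssim (R^2\avert{Q}V)^{1/2}|\mu Q|^{1/2}Y^{1/2}Z^{1/2}$ on the \emph{same} cube. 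The upper bound $X\lesssim(1+R^2\avert{Q}V)^2|\mu Q|Y$ then follows by absorption, and the lower bound for $X$ comes from Lemma \ref{lemm:1'} and the subharmonic mean-value inequality. Your Caccioppoli computation and appeal to Lemma \ref{lemm:2'} are in the right spirit, but the Morrey/H\"older step is the missing idea that your doubling claim was substituting for.
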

\begin{proof} If $\avert{\mu Q} |L u|^p
=\infty$ , there is nothing to prove. Assume, therefore, that $\avert{\mu Q} |L u|^p
<\infty$.
  Let $1<\nu <\mu$ and $\eta$ be a smooth non-negative function, bounded by 1, equal to 1 on $\nu Q$ with support on $\mu Q$ and whose gradient is bounded by $ C/R$ and Laplacian by $C/R^2$.

Integrating the equation $H(\textbf{a},0) u + Vu =0$ against� $\bar{u}\eta^2$. \\
Since

 $$H(\textbf{a},V)u = \sum_{j=1}^{n} L_{j}^{\star} L_{j} u + V u,$$
  
$$\int H(\textbf{a},V)u\,\bar{u}\eta^2 =  \sum_{j=1}^{n}\int  L_{j} u\,\overline{L_{j}(u\eta^2)} + \int V |u|^{2}\,\eta^2,$$
then
$$
\int  |L u|^2 \eta^2 + V|u|^2 \eta^2= 2 \int L u \cdot \nabla \eta\, \bar{u} \eta, $$
hence
$$\int  V|u|^2 \eta^2\le \frac C R \bigg( \int_{\mu Q} |L u|^2 \bigg)^{1/2} \bigg( \int |u|^2 \eta^2\bigg)^{1/2},$$
\begin{equation}\label{X}
X \le  {C\, {(R^2\avert{Q}V)^{1/2}}  |\mu Q|^{1/2}\, Y^{1/2}\, Z^{1/2}}
\end{equation}
where we set $X=(R^2\avert{Q}V) \int V |u|^2 \eta^2$,  $Y= \big(\avert{\mu Q} |L u|^p\big)^{2/p}$
and $Z= \avert{Q}V \int |u|^2 \eta^2$.
By Morrey's embedding theorem and diamagnetic inequality \eqref{2diam}, $u$ is H\"older continuous with exponent $\alpha= 1- n/p$. Hence for all $x,y\in \mu Q$, we have

$$
\left||u(x)| -|u(y)|\right| \le C \bigg( \frac{|x-y|}R\bigg)^\alpha \, R\, \big(\avert{\mu Q} |\nabla |u||^p\big)^{1/p}\leq C \bigg( \frac{|x-y|}R\bigg)^\alpha \, R\, Y^{1/2}.
$$
 We pick $y\in \overline Q$ such that $|u(y)|= \inf_{Q}|u|$. Then 
\begin{align*}
Z=\avert{Q}V \int |u|^2 \eta^2 &\le 2  (\avert{Q} V) \inf_{Q}|u|^2 \int \eta^2 + 2 (\avert{Q} V) \int \left||u(x)| -|u(y)|\right|^2 \eta^2(x)\, dx
\\
& \le 2 \big(\avert{Q} (V|u|^2)\big) \int \eta^2 + C (\avert{Q} V) R^2 Y \, \int 
 \bigg( \frac{|x-y|}R\bigg)^{2\alpha} \eta^2(x)\, dx
 \\
 & \le C   \big(\avert{Q} (V|u|^2)\big)  |Q| + C (\avert{Q} V) R^2 Y\,|\mu Q|  \\
 & \le C  \int V|u|^2\eta^2 + C (\avert{Q} V) R^2 Y\,|\mu Q|.
 \end{align*}
 where, in the penultimate inequality, we used the support condition on $\eta$ and $0\le \eta \le 1$, and in the last,    $\eta=1$ on $ Q$. 
 Using the previous inequalities, we obtain
 $$X  \le C  |\mu Q| ^{1/2} \, Y^{1/2} \, \big( CX + C(R^2\avert{Q}V)^2  |\mu Q| Y\big)^{1/2},
 $$
 which, as $2ab \le \epsilon^{-1} a^2 + \epsilon b^2$ for all $a,b \ge 0$ and $\epsilon>0$, implies
  $$
 X \le C(1 + R^2 \avert{Q}V )^2 \,  |\mu Q| \, Y.
 $$
 Next, let $1< \nu'<\nu$. Using $\eta=1$ on $\nu Q$ Lemma \ref{th:sousharm} and Lemma \ref{lemm:1'}
 $$
 \int  V|u|^2 \eta^2 \ge \int_{\nu Q} V |u|^2 \ge C \avert{\nu' Q } V \, \int_{\nu' Q} |u|^2 \ge C   (\avert{Q } V)  (1 + R^2 \avert{Q}V )^k \int_{ Q} |u|^2,
 $$
 hence
 $$
 X \ge C (R \avert{Q}V)^2  (1 + R^2 \avert{Q}V )^k \int_{Q} |u|^2.
 $$
 The upper and lower bounds for $X$ yield the lemma.

 \end{proof}
Now we will give the proof of \ref{lemm:5'} and \ref{lemm:6'}.\\
\\
\paragraph{\textit{Proof of Proposition \ref{lemm:5'}:}}
We will assume that $q>\frac{2n}{n+2}$.\\

\textbf{a) Preparation:}\\
\\
We remind that  modulo a gauge transformation,  $u$ is a weak solution of $H(\textbf{h},V)u=0$ on $4Q$, where $\textbf{h}$ is the potential function defined in Lemma \ref{th:jauge'}. We call $L=\frac{1}{i} \nabla -\textbf{h}$.%

Let $v$ a weak solution of $\Delta v=0$ in $ 2 Q$ with $v=u$ on $\partial( 2 Q)$ . Set $w=u-v$ in $ 2 Q$. From elliptic theory we know that
 $$\avert { 2Q}   |\nabla w|^{2} \leq  \avert {2 Q}   |\nabla u|^{ 2} ,$$
 thus,
  $$\avert { 2Q}   |\nabla v|^{2} \leq 4 \avert {2 Q}   |\nabla u|^{ 2} .$$
  It suffices to establish the following inequality for $1\leq \mu <2$,
  \begin{equation}\label{13}\sup_{\mu Q} |Lv|\le C \big( \avert {\frac{3}{2} Q}   |L u|^{ 2} + V|u|^{2} \big)^{\frac{1}{2} },
  \end{equation}
 and
 \begin{equation}\label{2} \big( \avert { Q}   |Lw|^{ q^*} \big)^{1/ q^*}\le \phi\big( R^{2} \avert {Q} V\big)\big( \avert {\frac{5}{2} Q}   |L u|^{ 2} + V|u|^{2} \big)^{\frac{1}{2} },
 \end{equation}
where $\phi$ is a real function which increases polynomially.
The conclusion follows easily through Lemma \ref{lemm:1'}. \\
 \\
 \textbf{b) Estimate of $v$:}\\
 \\
 Since $\nabla v$ is harmonic on $2Q$ then for $1\leq \mu <2$, 
$$\sup_{\mu Q} |\nabla v|\le C  \avert {2 Q}   |\nabla v|^{ 2} \leq 4C \avert {2Q} |\nabla u|^{2}.$$
Using the harmonicity of $v$ and the fact that $v=u$ on $\partial (2Q)$, we obtain
$$ \sup_{2Q} |v|^{2} \leq \sup_{\partial(2Q)} |v|= \sup_{\partial(2Q)} |u|,$$
and \eqref{eq:mvi'} gives
\begin{equation}\label{3}\sup_{2Q}|v| \leq C\big( \avert{\frac{9}{4}Q}|u|^{2}\big)^{\frac{1}{2}}.
\end{equation}
 \eqref{eq:star}, \eqref{jaugebes'}, Lemma \ref{lemm:2'} and the fact that $\avert {2Q} V \sim \avert {\frac{9}{4}Q} V,$ yield
\begin{equation}\label{4}\sup_{2Q} |\textbf{h} v| \leq C\big[\big(R\avert {2Q} V\big)^{2} \avert {\frac{9}{4}Q}|u|^{2}\big]^{\frac{1}{2}}\leq C\big[ \avert {\frac{5}{2}Q}V|u|^{2}\big]^{\frac{1}{2}}.
\end{equation}
Hence, inequality \eqref{13} holds.\\ 
\\
\textbf{c) Estimate of $w$:}\\
\\
Using $w=u-v$, \eqref{3} and \eqref{eq:mvi'}, we obtain
\begin{equation}\label{5} \sup_{2Q} |w| \leq C \big( \avert {\frac{9}{4}Q} |u|^{2}\big)^{\frac{1}{2}},
\end{equation}
and
\begin{equation}\label{6}\sup_{2Q} |\textbf{h}w| \leq C \big( \avert {\frac{5}{2}Q} V|u|^{2}\big)^{\frac{1}{2}}.
\end{equation}
It suffices to estimate $\nabla w$. We have
$$\avert {2Q} |\nabla w|^{2}\leq \avert {2Q} |\nabla u|^{2}\leq 4 \avert {2Q} |Lu|^{2}+ \avert {2Q} |\textbf{h}u|^{2}, $$
by combining \eqref{4} and \eqref{6}, it follows
\begin{equation}\label{7}\avert {2Q} |\nabla w|^{2}\leq C \avert {\frac{5}{2}Q} |L u|^{2}+ V|u|^{2}.
\end{equation}
   Now we will control $\avert {Q} |\nabla w|^{q^*}$. Let $1<\mu <\mu' <2$ and $\eta$ be a smooth non-negative function, bounded by 1, equal to 1 on $\mu Q$ with support on $\mu' Q$ and whose gradient is bounded by $ C/R$ and Laplacian by $C/R^2$. We know that 
   $$0= H(\textbf{h},V)(u)= -\Delta u-\frac{1}{i} \nabla. (\textbf{h}u)- \frac{1}{i}\textbf{h}. \nabla u+ ( |\textbf{h}|^{2}+V)u,\, \, \textrm{on}\,\, 4Q.$$
   Then
   $$\Delta u=-\frac{1}{i}( \nabla. \textbf{h})u- \frac{2}{i}\textbf{h}. \nabla u+ ( |\textbf{h}|^{2}+V)u$$ 
    $$=-\frac{1}{i}( \nabla. \textbf{h})u- 2\textbf{h}. L u+ ( -|\textbf{h}|^{2}+V)u.$$ 
    Since $\Delta w=\Delta u$ on $2Q$, then
    $$ \Delta (w\eta)=(\Delta w)\eta +2 \nabla w. \nabla \eta+ w \Delta \eta$$
    $$=-\frac{1}{i}( \nabla. \textbf{h})u\eta- 2(\textbf{h}. L u)\eta+ ( -|\textbf{h}|^{2}+V)u\eta+ 2 \nabla w.\nabla \eta+ w\Delta \eta.$$
Let $\Gamma_{0}$ be the fundamental solution of $\Delta$. We know that
$$|\nabla \Gamma_{0}(x)|\leq C |x|^{1-n}.$$
 Hence, for $x\in \mu' Q$,
\begin{align*}|\nabla w(x)|&\leq C \big( \int_{\mathbb{R}^{n}} \frac{|\textbf{h}(y) Lu(y)| \eta(y)}{|x-y|^{n-1}}dy+\int_{\mathbb{R}^{n}} \frac{\big(|\nabla\textbf{h}(y)|+ | \textbf{h}(y)|^{2}\big)|u(y)| \eta(y)}{|x-y|^{n-1}}dy
\\
&+\int_{\mathbb{R}^{n}} \frac{V(y) |u(y)| \eta(y)}{|x-y|^{n-1}}dy+ \int_{\mu' Q \setminus \mu Q }\big( \frac{|\nabla w(y)|}{R|x-y|^{n-1}}+\frac{|w(y)|}{R^{2}|x-y|^{n-1}}\big) dy\big).
\end{align*}
Using inequalities \eqref{jaugebes'}, \eqref{jaugebes''} and \eqref{eq:star}, we obtain
$$|\nabla w(x)|\leq C \big( \big( R\avert {Q}V\big) \int_{\mathbb{R}^{n}} \frac{| Lu(y)| \eta(y)}{|x-y|^{n-1}}dy+\big( \avert {Q} V +R \big(\avert {Q}V\big)^{\frac{3}{2}}+  \big(R\avert {Q}V\big)^{2}\big)\int_{\mathbb{R}^{n}} \frac{|u(y)| \eta(y)}{|x-y|^{n-1}}dy$$
\begin{equation}\label{8}+\int_{\mathbb{R}^{n}} \frac{V(y) |u(y)| \eta(y)}{|x-y|^{n-1}}dy+ \{\avert{2 Q }|\nabla w|+ \frac{1}{R}|w|\}=I+II+III+IV.
\end{equation}
 Poincar\'e inequality and \eqref{7} imply
$$\avert {2Q} |\nabla w| + \frac{1}{R}|w|\leq C\big( \avert {2Q} |\nabla w|^{2}\big)^{\frac{1}{2}}\leq C \big( \avert {\frac{5}{2}Q} |Lu|^{2}+ V |u|^{2}\big)^{\frac{1}{2}}. $$
Now we have to estimate $III$. We use the Hardy-Littlewood-Sobolev theorem, the fact that $V\in RH_{q}$, estimate \eqref{eq:mvi'} and Lemma \ref{lemm:2'}:
$$\big(\avert {\mu' Q}III^{q^*}\big)^{\frac{1}{q^*}}\leq C R \big( \avert {\mu Q}|Vu|^{q}\big)^{\frac{1}{q}}\leq C R\avert {2Q} \sup_{2Q} |u|$$
$$\leq C \big( \avert{\frac{5}{2}Q}|Lu|^{2}+ V |u|^{2}\big)^{\frac{1}{2}}.$$
Since
$$\int_{\mathbb{R}^{n}} \frac{|u(y)| \eta(y)}{|x-y|^{n-1}}dy\leq \sup_{2Q}|u|\int_{2Q}\frac{dy}{|x-y|^{n-1}}\leq CR \sup_{2Q}|u|$$
Then, \eqref{eq:mvi'} and Lemma \ref{lemm:2'} imply :
$$\sup_{\mu'Q} |II|\leq C\big(1+ \big(R\avert {Q}V\big)^{\frac{1}{2}}+R\avert {Q} V\big) R\avert {Q} V \sup_{2Q}|u|$$
$$\leq C \big( 1+ R\avert {Q} V\big) \big( \avert {\frac{5}{2}Q} |Lu|^{2}+ V|u|^{2}\big)^{\frac{1}{2}}.$$
Finally, we apply Hardy-Littlewood-Sobolev theorem and we obtain
$$\big( \avert {\mu' Q}I^{q^*}\big)^{\frac{1}{q^*}}\leq C R^{2}\avert {Q} V \big( \avert {\mu Q} |Lu|^{q}\big)^{\frac{1}{q}}.$$
Hence
$$\big( \avert {\mu' Q} |\nabla w|^{q^*}\big)^{{1}{q^*}}\leq C \big( 1+ R^{2}\avert {Q} V\big)\big( \avert {\frac{5}{2}Q}|Lu|^{2}+ V|u|^{2}\big)^{\frac{1}{2}}+ C R^{2}\avert {Q} V \big( \avert {\mu Q} |Lu|^{q}\big)^{\frac{1}{q}}.$$
Since $Lw=Lu-Lv$, we combine the above inequality with \eqref{13} :
$$\big( \avert {\mu' Q}|Lw|^{q^*}\big)^{\frac{1}{q^*}}\leq C \big( 1+ R^{2}\avert {Q}V\big)\big(\avert {\frac{5}{2}Q}|Lu|^{2}+ V|u|^{2}\big)^{\frac{1}{2}}+ C\big(R^{2} \avert {Q}V\big) \big( \avert {\mu Q}|Lw|^{q}\big)^{\frac{1}{q}}.$$
By iterating the previous inequality and using \eqref{7} we finish the proof of \eqref{2}.$\square$\\

\textit{Proof of Proposition \ref{lemm:6'}:} The proof is the same as that of Proposition \ref{lemm:5'}, we use  Lemma \ref{lemm:2'} instead of Lemma \ref{lemm:3'}.

Let us quote two additional results which ensue at once from what precedes but which we shall not use. 
\begin{cor}
\label{lemm:4'}
Let $1< \mu\le 2$ and $k>0$. If $q<n$, then there exists a constant $C$ such that
\begin{equation}\label{pt}
\big( \avert { Q}   |L u|^{ q^*} \big)^{1/ q^*} \le  \frac {C} { R(1+ R^2\avert {Q}V)^k} \big( \sup_{\mu Q} |u|\big).
\end{equation}
If  $q \ge n$, then there exists a constant $C$ such that
\begin{equation}\label{gd}
\sup_{ Q}   |L u|  \le    \frac {C} { R(1+ R^2\avert {Q}V)^k}  \big(\sup_{\mu Q} |u|\big). 
\end{equation}

\end{cor}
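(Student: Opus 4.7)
The strategy is to combine Proposition~\ref{lemm:5'} (for $q<n$) and Proposition~\ref{lemm:6'} (for $q\ge n$) with the Caccioppoli inequality~\eqref{eq:cc} and the subharmonicity of $|u|^2$ (Lemma~\ref{subh}), so as to replace the $L^2$-mean of $|Lu|^2+V|u|^2$ appearing on the right-hand side of those propositions by the pointwise bound $R^{-1}\sup_{\mu Q}|u|$ required here.

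For the case $q<n$, I would start from Proposition~\ref{lemm:5'} applied to the cube $Q$:
$$
\bigl(\avert{Q}|Lu|^{q^*}\bigr)^{1/q^*} \le \frac{C}{(1+R^2\avert{Q}V)^k}\bigl(\avert{3Q}|Lu|^2+V|u|^2\bigr)^{1/2}.
$$
The next step is to estimate the $L^2$-mean on $3Q$. Since $u$ is a weak solution of $H(\mathbf{a},V)u=0$ in all of $4Q$, one can cover $3Q$ by sub-cubes $\{Q_j\}$ of sidelength $cR$, with $c$ small enough that every $2Q_j\subset 4Q$. Applying the Caccioppoli inequality~\eqref{eq:cc} on each $Q_j$ and summing using the bounded overlap of $\{2Q_j\}$ yields
$$
\avert{3Q}|Lu|^2+V|u|^2 \;\le\; \frac{C}{R^2}\,\avert{\widetilde Q}|u|^2,
$$
for some cube $\widetilde Q$ with $3Q\subset\widetilde Q\subset 4Q$. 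Finally, invoking the subharmonicity of $|u|^2$ together with the mean-value inequality~\eqref{eq:mvi'}, the $L^2$-mean on $\widetilde Q$ can be dominated by $\sup_{\mu Q}|u|^2$ up to a constant depending on $\mu$. Combining these three estimates produces the factor $1/R$ together with the decay $(1+R^2\avert{Q}V)^{-k}$, hence~\eqref{pt}.

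For $q\ge n$, the same argument works, now starting from the sup-version~\eqref{wrhL2'} in Proposition~\ref{lemm:6'}. That inequality does not by itself carry the decay factor $(1+R^2\avert{Q}V)^{-k}$, but this decay is introduced by sandwiching an application of Lemma~\ref{lemm:1'} between Proposition~\ref{lemm:6'} and the Caccioppoli step. The remainder of the chain (Caccioppoli plus subharmonic mean-value inequality) is then identical, and yields~\eqref{gd}.

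The main obstacle is the coordination of the cube scales: Caccioppoli is intrinsically a ``doubling'' inequality, while the mean-value inequality~\eqref{eq:mvi'} controls a sup on a smaller cube by an average on a larger one, so a naive chain wants to enlarge the cubes at each step, whereas the target is a sup on $\mu Q$ with $\mu\le 2$. Reconciling this requires a careful choice of intermediate cubes, together with repeated use of the doubling property of the $A_\infty$ weight $V$ and of the arbitrariness of $k>0$ in Proposition~\ref{lemm:5'} (and in Lemma~\ref{lemm:1'}): the strong decay $(1+R^2\avert{Q}V)^{-k}$ absorbs any polynomial-in-the-cube-ratio constants arising from the rescaling.
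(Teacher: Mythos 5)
Your overall plan --- chain Propositions~\ref{lemm:5'}/\ref{lemm:6'} with the Caccioppoli inequality to convert an $L^2$-mean of $|Lu|^2+V|u|^2$ into $R^{-1}\sup|u|$ --- is indeed what the paper's terse proof intends. But as written the argument has a gap in the final step, and the gap is not a matter of bookkeeping that the decay factor can ``absorb.''

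You apply Proposition~\ref{lemm:5'} first on the full cube $Q$, producing $\bigl(\avert{3Q}|Lu|^2+V|u|^2\bigr)^{1/2}$ on the right-hand side, then run Caccioppoli by covering $3Q$, which yields $\avert{\widetilde Q}|u|^2$ for some $\widetilde Q$ with $3Q\subset\widetilde Q\subset 4Q$, and finally claim that this average is ``dominated by $\sup_{\mu Q}|u|^2$ up to a constant.'' That last step cannot work: for $\mu\le 2$ one has $\mu Q\subsetneq 3Q\subset\widetilde Q$, so the average of $|u|^2$ over the \emph{larger} set $\widetilde Q$ cannot be controlled by the sup over the strictly smaller set $\mu Q$ --- the solution can be large on $\widetilde Q\setminus\mu Q$. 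The mean-value inequality~\eqref{eq:mvi'} runs in the opposite direction (sup on a smaller cube bounded by an average on a larger one), so invoking subharmonicity here does not help. You acknowledge the ``coordination of cube scales'' as the main obstacle in your last paragraph, but the resolution you sketch does not address it; the decay factor $(1+R^2\avert{Q}V)^{-k}$ controls constants, not the pointwise size of $u$ outside $\mu Q$.

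The repair is to do the covering \emph{before} invoking Propositions~\ref{lemm:5'}/\ref{lemm:6'}, not after. Fix $\mu\in(1,2]$ and cover $Q$ by sub-cubes $Q_j$ of sidelength $cR$ with bounded overlap, choosing $c=c(\mu)$ so small that $6Q_j\subset\mu Q$ and $4Q_j\subset 4Q$. Apply Proposition~\ref{lemm:5'} on each $Q_j$ (the solution lives on $4Q_j\subset 4Q$), then Caccioppoli on $3Q_j$ (needs the solution on $6Q_j\subset 4Q$), which gives
$$
\Bigl(\avert{Q_j}|Lu|^{q^*}\Bigr)^{1/q^*}\;\le\;\frac{C}{(1+r_j^{2}\avert{Q_j}V)^{k}}\cdot\frac{1}{r_j}\Bigl(\avert{6Q_j}|u|^2\Bigr)^{1/2}\;\le\;\frac{C}{r_j(1+r_j^{2}\avert{Q_j}V)^{k}}\,\sup_{\mu Q}|u|,
$$
using only $6Q_j\subset\mu Q$ (no subharmonicity is needed at this point). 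Since $r_j=cR$ and $V\in A_\infty$ is doubling, $1+r_j^2\avert{Q_j}V\gtrsim_c 1+R^2\avert{Q}V$, so the decay factor transfers to the big cube; summing over $j$ with bounded overlap yields~\eqref{pt}. For $q\ge n$ one replaces Proposition~\ref{lemm:5'} by~\eqref{wrhL2'} from Proposition~\ref{lemm:6'}, inserting Lemma~\ref{lemm:1'} once to recover the decay factor, exactly as you propose.
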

\begin{proof}It remains to use Caccioppoli type inequality \eqref{eq:cc} and Proposition \ref{lemm:6'}.
\end{proof}

\begin{cor}
\label{lemm:7'} 
Let $ q\ge n/2$. For any $1< \mu\le 2$ and $k>0$ there exists a constant $C$ such that
$$
(R  \avert { Q} V)^2  \,  \avert { Q} | u|^2 \le   \frac {C} { (1+ R^2\avert {Q}V)^k}  \big(\avert {\mu Q} |L u|^2\big). 
$$
\end{cor}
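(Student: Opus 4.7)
The plan is to chain Lemma \ref{lemm:3'} (which bounds the left-hand side by an average of $|Lu|^p$ for arbitrary $p>n$) with Proposition \ref{lemm:6'} (a reverse-H\"older self-improvement of an $L^2$ average of $|Lu|$). Fix an intermediate dilation $\mu_1$ with $1<\mu_1<\mu$. Lemma \ref{lemm:3'}, applied on the cube $Q$ with dilation $\mu_1$ and any $p>n$, gives
$$
(R\avert{Q}V)^2\,\avert{Q}|u|^2 \;\le\; \frac{C}{(1+R^2\avert{Q}V)^k}\,\bigl(\avert{\mu_1 Q}|Lu|^p\bigr)^{2/p}.
$$
It therefore suffices to produce, for some $p>n$, the self-improvement
$$
\bigl(\avert{\mu_1 Q}|Lu|^p\bigr)^{1/p} \;\le\; C\,\bigl(\avert{\mu Q}|Lu|^2\bigr)^{1/2}.
$$

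If $q\ge n$, Proposition \ref{lemm:6'} directly yields $\sup_{\mu_1 Q}|Lu|\le C\bigl(\avert{\mu Q}|Lu|^2\bigr)^{1/2}$ (using the admissible dilation $\mu/\mu_1\in(1,2)$ in place of $\mu$), which implies the displayed inequality for every $p\ge 1$; we may thus pick any $p>n$ and conclude.

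If $n/2\le q<n$, note $q^{*}=nq/(n-q)\ge n$, with equality exactly at $q=n/2$. By the open-endedness of reverse H\"older classes, $V\in RH_q$ implies $V\in RH_{q+\varepsilon}$ for some $\varepsilon>0$, so without loss of generality we may take $q>n/2$ strictly, hence $q^{*}>n$. Choose any $p\in(n,q^{*}]$; Jensen's inequality gives $\bigl(\avert{\mu_1 Q}|Lu|^p\bigr)^{1/p}\le\bigl(\avert{\mu_1 Q}|Lu|^{q^{*}}\bigr)^{1/q^{*}}$, and Proposition \ref{lemm:6'} (again with dilation $\mu/\mu_1\in(1,2)$) bounds the right-hand side by $C\bigl(\avert{\mu Q}|Lu|^2\bigr)^{1/2}$. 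Combining with the estimate from Lemma \ref{lemm:3'} yields the corollary.

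The only mildly delicate point is the endpoint $q=n/2$, where $q^{*}=n$ prevents choosing $p$ simultaneously $>n$ (required by Lemma \ref{lemm:3'}) and $\le q^{*}$ (required to chain with Proposition \ref{lemm:6'}); this is defused by the standard self-improvement of reverse H\"older classes. Once that is absorbed, the proof is a one-line composition of the two preceding results, so no genuinely new estimate is needed.
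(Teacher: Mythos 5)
Your argument is correct and is the route the paper intends: the paper's one-line proof cites ``Lemma~\ref{lemm:4'}'' together with Proposition~\ref{lemm:6'}, but \ref{lemm:4'} is in fact a Corollary that estimates $|Lu|$ by $\sup|u|$ (the wrong direction for the present inequality), so that citation is evidently a slip for Lemma~\ref{lemm:3'}, which is exactly what you chain with Proposition~\ref{lemm:6'}. Your explicit treatment of the endpoint $q=n/2$ (where $q^*=n$, so one must invoke the self-improvement $RH_{q}\subset RH_{q+\varepsilon}$ before a $p\in(n,q^*]$ can be chosen for Lemma~\ref{lemm:3'}) is a detail the paper leaves implicit; aside from that and some routine bookkeeping on the dilation parameters, the argument matches the paper's.
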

\begin{proof} The proof is a consequence of Lemma \ref{lemm:4'} and Proposition \ref{lemm:6'}.
\end{proof}

\section{Maximal inequalities}

We give some important maximal inequalities that we will use to prove our results around the Riesz transforms.  These estimates are essentially a consequence of some $L^p$ estimates proved in \cite{AB} and based on the $L^1$ inequality deduced from the work of Gallou\"et and Morel \cite{GM} in the semi-linear setting or from Kato's paper \cite{K2}.
 \begin{theo}\label{th:VH}  Let $\textbf{a}\in  L^{2}_{loc}(\mathbb{R}^{n})^{n}$ and $ V
\in RH_{q},\quad 1< q\leq +\infty$. Then, there exists $ \epsilon >0$, depending only on the $RH_q$ constant of  $V$, such that $V\,H(\textbf{a},V)^{-1}$ and $H(\textbf{a},0)H(\textbf{a},V)^{-1}$ are $L^p$ bounded for all $1 \leq p
< q+\epsilon$.
\end{theo}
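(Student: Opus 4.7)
The plan is to reduce the estimates for the magnetic operator to the corresponding ones for the scalar Schr\"odinger operator $-\Delta+V$ already handled in \cite{AB}, by means of a semigroup domination argument. The input, refining the domination \eqref{dom} given in Section~2, is that combining the diamagnetic inequality with the Trotter product formula (equivalently, the Feynman--Kac--It\^o formula, see \cite{Si4}) yields the pointwise semigroup inequality
$$
|e^{-tH(\textbf{a},V)}f|\le e^{-t(-\Delta+V)}|f|,\qquad \forall\, t\ge 0,\ f\in L^{2}(\mathbb{R}^{n}).
$$
Integrating in $t$ over $(0,\infty)$ gives the resolvent-level bound $|H(\textbf{a},V)^{-1}f|\le (-\Delta+V)^{-1}|f|$, and since $V\ge 0$ one deduces the pointwise domination
$$
|V\,H(\textbf{a},V)^{-1}f|\le V(-\Delta+V)^{-1}|f|.
$$

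With this in hand, I would invoke the non-magnetic results from \cite{AB}: under the hypothesis $V\in RH_{q}$ there exists $\epsilon>0$, depending only on the $RH_{q}$ constant of $V$, such that $V(-\Delta+V)^{-1}$ is bounded on $L^{p}(\mathbb{R}^{n})$ for every $1<p<q+\epsilon$ and is of weak type $(1,1)$, the endpoint relying on the Gallou\"et--Morel \cite{GM} and Kato \cite{K2} arguments quoted in the statement. The pointwise domination above transfers both conclusions directly to $V\,H(\textbf{a},V)^{-1}$, giving the desired bound for $1\le p<q+\epsilon$.

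The second operator is then immediate from the algebraic identity. Since $H(\textbf{a},V)=H(\textbf{a},0)+V$, one has on an appropriate dense domain
$$
H(\textbf{a},0)\,H(\textbf{a},V)^{-1}=I-V\,H(\textbf{a},V)^{-1},
$$
so the $L^{p}$ boundedness of $H(\textbf{a},0)H(\textbf{a},V)^{-1}$ follows from that of $V\,H(\textbf{a},V)^{-1}$ combined with the trivial bound for the identity. Notably no hypothesis on the magnetic field $B$ is needed, which is natural since the only role played by $\textbf{a}$ is to produce the diamagnetic domination.

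The only nontrivial technical point I anticipate is justifying the $t$-integration defining $H(\textbf{a},V)^{-1}$ on $L^{p}$ for $p\ne 2$. The clean way is to first prove the pointwise domination at the level of shifted resolvents $(H(\textbf{a},V)+\lambda)^{-1}\le (-\Delta+V+\lambda)^{-1}$ for $\lambda>0$, apply the bounded $L^{p}$ theory of \cite{AB} to $V(-\Delta+V+\lambda)^{-1}$ uniformly in $\lambda>0$, and then pass to the limit $\lambda\downarrow 0$ (using the $RH_{q}$ assumption to ensure that $V$ provides enough decay for the limit to exist in the relevant sense). Beyond this, the argument is essentially formal, with the deep content imported from \cite{AB}.
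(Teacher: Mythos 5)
Your proof is correct, and it is genuinely shorter than the one in the paper, though both hinge on the same two inputs (diamagnetic domination and the non-magnetic theory of \cite{AB}). The paper uses the domination $|H(\textbf{a},V)^{-1}f|\le H(0,V)^{-1}|f|$ only at the $L^1$ endpoint (Lemma \ref{lemm:l1}), importing the $L^1$ bound for $V(-\Delta+V)^{-1}$ from \cite{AB}; it then re-runs the whole local machinery for $1<p<q+\epsilon$ — subharmonicity of $|u|^2$, the weighted mean-value inequality of Lemma \ref{th:sousharm}, and the self-improving $L^p$ criterion of Theorem \ref{theor:shen} — to upgrade from $L^1$ to $L^p$. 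You instead observe that the same pointwise domination, being valid for every $f$, immediately transfers the full scale of $L^p$ bounds for $V(-\Delta+V)^{-1}$ from \cite{AB} to $V H(\textbf{a},V)^{-1}$, so no second pass through the extrapolation argument is needed; the identity $H(\textbf{a},0)H(\textbf{a},V)^{-1}=I-VH(\textbf{a},V)^{-1}$ then handles the second operator in both arguments. What the paper's longer route buys is robustness: the local subharmonicity-plus-extrapolation scheme is the template reused elsewhere in the paper (e.g.\ Proposition \ref{thC'} for $V^{1/2}H^{-1}L^{\star}$ and $V^{1/2}H^{-1}V^{1/2}$, and the Riesz transform estimates in Section 6), where a crude pointwise domination is unavailable because the operator is not of the form ``nonnegative multiplier times a positivity-preserving kernel.'' For the present theorem alone, your route is the cleaner one. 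Two small points worth making precise in a write-up: the domination used should be stated as $|e^{-tH(\textbf{a},V)}f|\le e^{-tH(0,V)}|f|$ (the paper's displayed \eqref{dom} only gives domination by the free heat semigroup, but its own proof of Lemma \ref{lemm:l1} uses the $H(0,V)$ version, which is the correct reference); and, as you noted, the passage $\lambda\downarrow 0$ needs the uniform-in-$\lambda$ bound on $V(-\Delta+V+\lambda)^{-1}$, which holds because $V+\lambda$ keeps the same $RH_q$ constant as $V$ and $V\le V+\lambda$ pointwise.
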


\begin{cor}\label{th:VH2}  Let $\textbf{a}\in  L^{2}_{loc}(\mathbb{R}^{n})^{n}$ and $ V
\in RH_{q}, \, \, 1<q\leq +\infty$. Then, there exists an $ \epsilon >0$, depending only on the $RH_q$ constant of  $V$, such that, the operators $V^{1/2}\,H(\textbf{a},V)^{-1/2}$ and $H(\textbf{a},0)^{1/2}\, H(\textbf{a},V)^{-1/2}$ are $L^p$ bounded for all $1< p< 2q+\epsilon$.
\end{cor}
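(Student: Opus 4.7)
The plan is to deduce Corollary \ref{th:VH2} from Theorem \ref{th:VH} by Stein's complex interpolation theorem applied to the analytic family
\begin{equation*}
T_z = V^{z} H(\textbf{a},V)^{-z}, \qquad 0\le \operatorname{Re}(z)\le 1,
\end{equation*}
and analogously to $S_z = H(\textbf{a},0)^{z} H(\textbf{a},V)^{-z}$. Both families are analytic in the open strip $0<\operatorname{Re}(z)<1$ in view of the spectral theorem applied to the self-adjoint nonnegative operators $V$, $H(\textbf{a},V)$ and $H(\textbf{a},0)$ (the latter defined via its quadratic form).

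First I would record the endpoints. On the line $\operatorname{Re}(z)=1$, one has $T_{1+it} = V^{it}\bigl(V H(\textbf{a},V)^{-1}\bigr) H(\textbf{a},V)^{-it}$ and $S_{1+it} = H(\textbf{a},0)^{it}\bigl(H(\textbf{a},0) H(\textbf{a},V)^{-1}\bigr) H(\textbf{a},V)^{-it}$. Theorem \ref{th:VH} gives that $V H(\textbf{a},V)^{-1}$ and $H(\textbf{a},0) H(\textbf{a},V)^{-1}$ are bounded on $L^{p_1}$ for every $1\le p_1<q+\epsilon$. On the line $\operatorname{Re}(z)=0$, the operators $T_{it}$ and $S_{it}$ are products of imaginary powers of nonnegative self-adjoint operators; by the general theory of imaginary powers (the $L^p$ boundedness for $1<p<\infty$ of $H(\textbf{a},V)^{it}$ and $H(\textbf{a},0)^{it}$ follows from Gaussian heat kernel domination via the Kato--Simon inequality \eqref{dom} and a standard transference/subordination argument, and the $L^p$ boundedness of $V^{it}$ is immediate) one obtains, for any $1<p_0<\infty$, bounds of the form
\begin{equation*}
\|T_{it}\|_{p_0\to p_0} + \|S_{it}\|_{p_0\to p_0} \le C(p_0)\,e^{c|t|},
\end{equation*}
with a polynomial or exponential growth in $t$ admissible for Stein's theorem.

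Then I would apply the Stein interpolation theorem at $\theta = 1/2$. For fixed $1<p_0<\infty$ and $1\le p_1<q+\epsilon$, the interpolated exponent is
\begin{equation*}
\frac{1}{p_\theta} = \frac{1}{2p_0} + \frac{1}{2p_1},
\end{equation*}
so $T_{1/2} = V^{1/2}H(\textbf{a},V)^{-1/2}$ and $S_{1/2} = H(\textbf{a},0)^{1/2}H(\textbf{a},V)^{-1/2}$ are bounded on $L^{p_\theta}$. Letting $p_0\to\infty$ and $p_1\to (q+\epsilon)^{-}$, the range $p_\theta$ sweeps out $(1,2(q+\epsilon))$, which yields the claimed interval $1<p<2q+\epsilon'$ (with $\epsilon'=2\epsilon$, reabsorbed into $\epsilon$).

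The step I expect to be the most delicate is the verification that the families $T_z$ and $S_z$ satisfy the hypotheses of Stein's theorem, namely analyticity in the strip, continuity on its closure on a dense class (e.g.\ on $\mathcal{D}(\mathcal{Q})\cap L^{p_0}\cap L^{p_1}$), and an admissible growth of the form $\|T_{a+it}\|\le Ce^{c|t|}$ on each vertical line. Analyticity and continuity follow from the spectral representation, while the admissible growth on $\operatorname{Re}(z)=1$ reduces to the polynomial-in-$t$ bounds on $V^{it}$ and $H(\textbf{a},V)^{it}$ composed with a bounded operator; the remaining subtle point is the growth of the imaginary powers of $H(\textbf{a},V)$, which is controlled by the semigroup domination \eqref{dom} and a standard argument based on holomorphic functional calculus.
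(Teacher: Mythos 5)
Your proof is correct and follows exactly the route the paper indicates: Stein complex interpolation between Theorem \ref{th:VH} at $\operatorname{Re}(z)=1$ and the $L^p$ boundedness (for $1<p<\infty$) of the imaginary powers $V^{it}$, $H(\textbf{a},V)^{it}$, $H(\textbf{a},0)^{it}$ at $\operatorname{Re}(z)=0$, the latter coming from Hebisch's theorem via the Gaussian heat-kernel bounds guaranteed by the domination inequality \eqref{dom}. One small terminological remark: what you call a ``subordination/transference argument'' for the imaginary powers is really a Calder\'on--Zygmund/off-diagonal estimate argument à la Hebisch and Duong--Robinson, and the domination inequality you invoke is \eqref{dom} rather than \eqref{eq:KS}; neither affects the validity of the argument.
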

 To prove this result, we shall use complex interpolation  relying on the fact that for all $y\in \mathbb{R}$, the imaginary power of Schr\"odinger operator  $H^{iy}$  has a bounded extension on $\mathbb{R}^n$, $1<p<\infty$. This result due to Hebisch \cite{H} follows from the Gaussian estimates on the heat kernel $e^{-tH}$ proved by \cite{DR} .
Here, $H^{iy}$ is defined  as  a bounded operator on $L^2(\mathbb{R}^n)$ by functional calculus ( see \cite{AB} for more details).\\

 \underline{\textit{Proof of Theorem \ref{th:VH}:}}\\
 	
The proof of this theorem is identical to that of Theorem 1.1 in \cite{AB}. First we prove an $ L ^ 1$ inequality, then we establish some reverse H\"older type estimates, then finally we apply Theorem \ref{theor:shen}.

\begin{lemm}\label{lemm:l1}Let $f \in L^{\infty}_{comp}(\mathbb{R}^n)$ and $u=H(\textbf{a},V)^{-1}f$. Then, 
\begin{equation}\label{2Vu}
\int_{\mathbb{R}^n} V|u| \le \int_{\mathbb{R}^n} |f|, 
\end{equation}
and
\begin{equation}\label{2Hu}
\int_{\mathbb{R}^n} |H(\textbf{a},0)u| \le 2 \int_{\mathbb{R}^n} |f|.
\end{equation}
 
\end{lemm}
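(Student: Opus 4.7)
The plan is to derive both inequalities from the pointwise (distributional) magnetic Kato inequality, combined with a regularization by the resolvent at level $\lambda>0$. Writing $H=H(\textbf{a},V)$ and $H_0=H(\textbf{a},0)$, the starting point is that for any $w\in \mathcal{D}(H)$ one has the magnetic Kato inequality
\begin{equation*}
-\Delta |w|\ \le\ \operatorname{Re}\Bigl(\operatorname{sgn}(\bar w)\,H_0 w\Bigr)\qquad\text{in }\mathcal{D}'(\mathbb{R}^n),
\end{equation*}
where $\operatorname{sgn}(\bar w)=\bar w/|w|$ on $\{w\neq 0\}$ and any measurable bounded extension elsewhere. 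This is classical (see e.g. Kato's original paper, or the Hess--Schrader--Uhlenbrock argument), and follows by regularizing $|w|$ by $|w|_\varepsilon=(|w|^2+\varepsilon^2)^{1/2}$ and using the identity $\partial_k|w|_\varepsilon=-\operatorname{Im}(\bar w L_k w)/|w|_\varepsilon$ together with $|\partial_k|w|_\varepsilon|\le |L_k w|$ (diamagnetic inequality).

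Step 1. For $\lambda>0$ and $f\in L^\infty_{\text{comp}}$, set $u_\lambda=(H+\lambda)^{-1}f$. By the Kato--Simon inequality \eqref{eq:KS}, $|u_\lambda|\le (-\Delta+\lambda)^{-1}|f|$, so $|u_\lambda|\in L^1(\mathbb{R}^n)\cap L^2(\mathbb{R}^n)$ since the kernel of $(-\Delta+\lambda)^{-1}$ is integrable for $\lambda>0$ and $f$ is compactly supported. Applying the Kato inequality to $w=u_\lambda$ and using $H_0 u_\lambda = f-\lambda u_\lambda - Vu_\lambda$ yields, in $\mathcal{D}'(\mathbb{R}^n)$,
\begin{equation*}
-\Delta |u_\lambda|+V|u_\lambda|+\lambda|u_\lambda|\ \le\ |f|.
\end{equation*}

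Step 2. Integrate this inequality against a smooth cutoff $\eta_R$ with $\eta_R=1$ on $B(0,R)$, $\eta_R=0$ outside $B(0,2R)$, and $|\Delta \eta_R|\le C/R^2$. Then
\begin{equation*}
-\int (\Delta|u_\lambda|)\eta_R=-\int |u_\lambda|\,\Delta\eta_R\ \xrightarrow[R\to\infty]{}\ 0,
\end{equation*}
because $|u_\lambda|\in L^1$. Letting $R\to\infty$ by monotone/dominated convergence gives
\begin{equation*}
\int_{\mathbb{R}^n} V|u_\lambda|+\lambda\int_{\mathbb{R}^n} |u_\lambda|\ \le\ \int_{\mathbb{R}^n} |f|.
\end{equation*}

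Step 3. Pass to the limit $\lambda\to 0^+$. Since $u_\lambda\to u=H^{-1}f$ in $L^2_{\text{loc}}$ (by spectral theory, using that $0$ is in the resolvent set on the appropriate quadratic form space, and via the uniform control $|u_\lambda|\le C(-\Delta+\lambda)^{-1}|f|$), Fatou's lemma gives $\int V|u|\le \int |f|$, which is \eqref{2Vu}.

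Step 4. For \eqref{2Hu}, use the pointwise identity $H_0 u = f-Vu$ (valid since $u\in \mathcal{D}(H)$) to write
\begin{equation*}
\int_{\mathbb{R}^n}|H_0 u|\ \le\ \int_{\mathbb{R}^n}|f|+\int_{\mathbb{R}^n} V|u|\ \le\ 2\int_{\mathbb{R}^n}|f|,
\end{equation*}
which is precisely \eqref{2Hu}.

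The main technical obstacle is justifying the magnetic Kato inequality at the distributional level for $w=u_\lambda\in \mathcal{D}(H)$ (which is not a priori smooth). This is handled by the standard regularization: apply Kato's pointwise inequality to the smoothed $|u_\lambda|_\varepsilon$, use the diamagnetic bound $|\nabla|u_\lambda|_\varepsilon|\le |Lu_\lambda|$ to control all error terms in $L^2_{\text{loc}}$, and let $\varepsilon\to 0$. Once this inequality is in hand, the rest is bookkeeping with the cutoff $\eta_R$ and Fatou.
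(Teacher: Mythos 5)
Your proof is correct, but it takes a genuinely different route from the paper's. The paper disposes of the lemma in two lines: by the Kato--Simon domination $|H(\textbf{a},V)^{-1}f|\le H(0,V)^{-1}|f|$ it reduces the magnetic estimate to the non-magnetic one, then quotes from \cite{AB} the bound $\int V\,H(0,V)^{-1}|f|\le\int|f|$, and gets \eqref{2Hu} by difference exactly as you do. You instead \emph{reprove} the $L^1$ estimate directly in the presence of $\textbf{a}$, via the distributional magnetic Kato inequality $-\Delta|w|\le\operatorname{Re}(\operatorname{sgn}(\bar w)\,H_0 w)$, the resolvent regularization $u_\lambda=(H+\lambda)^{-1}f$, cutoffs $\eta_R$, and Fatou. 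This is essentially the argument that underlies the quoted result of \cite{AB} (going back to Gallou\"et--Morel and Kato), transplanted to the magnetic setting; so your proof is more self-contained, at the cost of redoing the regularization, while the paper's is shorter because Kato--Simon lets it piggyback on the non-magnetic lemma.

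One loose end to tighten in your Step 3: the assertion that $u_\lambda\to u$ ``in $L^2_{\rm loc}$ by spectral theory, using that $0$ is in the resolvent set'' is not quite right --- in general $0$ lies in the spectrum of $H$, and $H^{-1}$ is not a bounded operator on $L^2$ but is defined via the heat semigroup or the fundamental solution. The clean justification is to write $u_\lambda=\int_0^\infty e^{-\lambda t}e^{-tH}f\,dt$, use the domination $|e^{-tH}f|\le e^{t\Delta}|f|$ together with $\int_0^\infty e^{t\Delta}|f|(x)\,dt=(-\Delta)^{-1}|f|(x)<\infty$ a.e.\ to get pointwise dominated convergence $u_\lambda(x)\to u(x)$ a.e., after which Fatou applies as you claim. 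With that repair the argument is complete.
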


\begin{proof}
 $V\geq 0$, by Kato-Simon inequality \eqref{eq:KS}, we have
 $$|H(\textbf{a},V)^{-1}f|\leq  H(0,V)^{-1}|f|.$$
 We know, by \cite{AB} that
 $$\int_{\mathbb{R}^n} VH(0,V)^{-1}|f| \le \int_{\mathbb{R}^n} |f|.$$ Thus, inequality
 \eqref{2Vu} holds, and inequality \eqref{2Hu} follows by difference.
\end{proof}

\underline{\textit{ Proof of the $L^p$ maximal inequality:}} \\

Assume $V\in RH_q$ with $q> 1$.	
$V H (\textbf{a}, V) ^{-1} $. We know that this operator is bounded on $ L ^ 1 (\mathbb{R}^ n) $, so we apply Theorem \ref{theor:shen} through the reverse H\"older inequality verified by any weak solution.
Set $Q$ a fixed cube and $f\in L^\infty(\mathbb{R}^n)$ a function with compact support in $\mathbb{R}^n \setminus 4Q$. Then $u=H(\textbf{a},V)^{-1} f$ is well defined in  $ \dot{\mathcal{V}}$ and it is a weak solution of $H(\textbf{a},0) u + Vu=0$ in $4Q$.

Since $|u|^2$ is subharmonic, by Lemma \ref{th:sousharm} with $w=V$, $f=|u|^2$ and $s=1/2$, we obtain
$$\big{(}\avert{Q} |Vu|^{q}\big{)}^{1/q} \leq C \avert{2Q} |Vu|.$$
Thus \eqref{T:shen} holds with $T=V H(\textbf{a},V)^{-1}$,  $p_{0}=1$, 
$q_{0}=q$, $S=0$, $\alpha_{1}=2$ and $\alpha_{2}=4$. Hence $V H(\textbf{a},V)^{-1}$ is bounded on $L^p(\mathbb{R}^n)$ for all $1<p<q$ by Theorem \ref{theor:shen}. Due to the properties of $RH_{q}$ weights, we can replace $q$ by $q+\epsilon$. Taking the difference, we obtain the same result for $H(\textbf{a},0) H(\textbf{a},V)^{-1}$.
This completes the proof of Theorem \ref{th:VH} .$\square$

\section{Proof of Theorem \ref{th:1a'}} 

Now we will focus on the proof of Theorem \ref{th:1a'}  we first need to establish some important inequalities. First of all we  reduce the problem thanks to the following Lemma:

 \begin{lemm}Under the assumptions of Theorem \ref{th:1a'}. For any $p>2$, the $L^p$ boundedness of $L H(\textbf{a},V)^{-\frac{1}{2}}$ is equivalent to that of  $L H(\textbf{a},V)^{-1} L^{\star}$ and $L H(\textbf{a},V)^{-1}V^{\frac{1}{2}}$.
 
 \end{lemm}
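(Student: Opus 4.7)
The plan is to exploit the factorizations
\[
L H^{-1} L^\star = (L H^{-1/2})(H^{-1/2} L^\star) \quad\text{and}\quad L H^{-1} V^{1/2} = (L H^{-1/2})(H^{-1/2} V^{1/2}),
\]
together with duality and Theorem \ref{th:1b'}, which controls $H^{1/2}$ by $L$ and $V^{1/2}$ in $L^{p'}$ norm. The $L^{p'}$ boundedness of the first-order Riesz transforms for $1<p'\leq 2$ is already known from the work of Sikora \cite{Sik} and Duong--Ouhabaz--Yan \cite{DOY}, so the new content is confined to the range $p>2$, and the whole argument reduces to composing known bounds on one side and dualizing Theorem \ref{th:1b'} on the other.

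For the forward direction, assume $L H^{-1/2}$ is $L^p$-bounded. Then $L H^{-1/2}$ is $L^{p'}$-bounded by Sikora--DOY, and $V^{1/2} H^{-1/2}$ is $L^{p'}$-bounded by Corollary \ref{th:VH2} (since $V\in RH_q$ with $q>1$ gives $2q+\epsilon>2$, which covers $p'\in(1,2]$). Dualizing these two statements, $H^{-1/2} L^\star$ and $H^{-1/2} V^{1/2}$ are $L^p$-bounded, and composing with the hypothesis produces the required $L^p$ bounds on $L H^{-1} L^\star$ and $L H^{-1} V^{1/2}$.

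For the converse, assume the two second-order operators are $L^p$-bounded. Since $L H^{-1} L^\star$ is self-adjoint on $L^2$, and since the $L^2$-adjoint of $L H^{-1} V^{1/2}$ is $V^{1/2} H^{-1} L^\star$, duality yields that both $L H^{-1} L^\star$ and $V^{1/2} H^{-1} L^\star$ are bounded on $L^{p'}$. By duality once more, it suffices to show that $H^{-1/2} L^\star$ is bounded on $L^{p'}$. Given $g\in L^{p'}((\mathbb{R}^n)^n)$, set $v=H^{-1} L^\star g$, so that $H^{-1/2} L^\star g = H^{1/2} v$, and apply Theorem \ref{th:1b'} at the exponent $p'<2$ to obtain
\[
\|H^{1/2} v\|_{p'} \leq C\bigl(\|Lv\|_{p'} + \|V^{1/2} v\|_{p'}\bigr) = C\bigl(\|L H^{-1} L^\star g\|_{p'} + \|V^{1/2} H^{-1} L^\star g\|_{p'}\bigr) \leq C\|g\|_{p'}.
\]

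I expect the main obstacle to be technical rather than conceptual: Theorem \ref{th:1b'} is stated for $C_0^\infty$ test functions, so applying it to $v=H^{-1} L^\star g$ for general $L^{p'}$ data requires a density/regularization argument in which $g$ is approximated by smooth vector fields and the estimate is closed in the graph norm $\|L\cdot\|_{p'}+\|V^{1/2}\cdot\|_{p'}$. The factorizations above must also be interpreted through dense subspaces on which both factors are simultaneously well-defined, but this is routine once one notes that $H^{-1/2} L^\star$ maps smooth compactly supported inputs into a dense subclass on which $L H^{-1/2}$ then acts, as in the analogous dualities carried out in \cite{AB}.
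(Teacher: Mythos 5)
Your proof is correct and follows essentially the same route as the paper: the forward direction composes the $L^p$ hypothesis with dual bounds on $H^{-1/2}L^\star$ and $H^{-1/2}V^{1/2}$ coming from the $1<p'\le 2$ range (Sikora/Duong--Ouhabaz--Yan and Corollary \ref{th:VH2}), and the converse writes $H^{-1/2}L^\star\textbf{F}=H^{1/2}\big(H^{-1}L^\star\textbf{F}\big)$, applies Theorem \ref{th:1b'} at exponent $p'$, uses the adjointed hypotheses, and dualizes back. The only cosmetic difference is that you invoke Corollary \ref{th:VH2} by name where the paper writes ``by the same argument,'' and you flag the density point explicitly; neither changes the substance.
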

 \begin{proof}
 If $LH(\textbf{a},V)^{-\frac{1}{2}}$ is $L^p$ bounded, then by duality and using $L^p$ boundedness of Riesz transforms for $1<p\leq 2$ , we have $H(\textbf{a},V)^{-\frac{1}{2}}L^{\star}$ is $L^{q}$ bounded for any $q\geq 2$. Hence $L H(\textbf{a},V)^{-1} L^{\star}$ is $L^p$ bounded.
 By the same argument, $H(\textbf{a},V)^{-\frac{1}{2}} V^{\frac{1}{2}}$ is $L^p$ bounded, and hence $LH(\textbf{a},V)^{-1}V^{\frac{1}{2}}$ is $L^p$ bounded.
 
Reciprocally, if $LH(\textbf{a},V)^{-1}L^{\star}$ and $LH(\textbf{a},V)^{-1}V^{\frac{1}{2}}$ are $L^p$ bounded, then their adjoints $LH(\textbf{a},V)^{-1}L^{\star}$ and $V^{\frac{1}{2}}H(\textbf{a},V)^{-1}L^{\star}$ are $L^{p'}$ bounded.

 Let $\textbf{F}\in C ^{\infty}_{0}(\mathbb{R}^{n}, \mathbb{C}^{n})$, then $$\|H(\textbf{a},V)^{-\frac{1}{2}}L^{\star}\textbf{F}\|_{p'}=\|H(\textbf{a},V)^{\frac{1}{2}}H(\textbf{a},V)^{-1}L^{\star}\textbf{F}\|_{p'}.$$
 Using assumption \eqref{eq:star} and inequality \eqref{eq:revLp}, we obtain
$$\|H(\textbf{a},V)^{-\frac{1}{2}}L^{\star}\textbf{F}\|_{p'}\leq C\big(\|LH(\textbf{a},V)^{-1}L^{\star}\textbf{F}\|_{p'}+\|V^{\frac{1}{2}}H(\textbf{a},V)^{-1}L^{\star}\textbf{F}\|_{p'}\big)\leq C \|F\|_{p'}.$$
Thus $LH(\textbf{a},V)^{-\frac{1}{2}}$ is $L^p$ bounded.
 \end{proof}
 Next we look at some useful related estimates  
\begin{pro} \label{thC'}Let $V \in RH_q$, $1<q\leq +\infty$. Then, for any $2<p< 2(q+\epsilon)$, and $\epsilon>0$ depending only on $V$,  $f\in C^\infty_{0}(\mathbb{R}^n,\mathbb{C})$ and $\textbf{F}\in C_{0}^\infty(\mathbb{R}^n, \mathbb{C}^n)$,
$$
\|V^{\frac{1}{2}} H(\textbf{a},V)^{-1} V^{\frac{1}{2}}f\|_p  \le C_{p} \|f\|_{p}, \quad  \|V^{\frac{1}{2}} H(\textbf{a},V)^{-1} L^{\star} \textbf{F}\|_p \le C_p\|\textbf{F}\|_p.
$$
\end{pro}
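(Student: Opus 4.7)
The plan is to factor each of the two operators through $H(\textbf{a},V)^{-1/2}$ and assemble the $L^p$ bound from Corollary~\ref{th:VH2} together with duality and the already known $L^{p'}$ boundedness of the Riesz transforms in the range $1<p'\le 2$ (from Sikora~\cite{Sik} and Duong--Ouhabaz--Yan~\cite{DOY}). Writing $H=H(\textbf{a},V)$ and using that both $V^{1/2}$ (as a multiplication operator) and $H^{-1/2}$ are self-adjoint on $L^2$, the key identities are
\begin{equation*}
V^{1/2}H^{-1}V^{1/2}=(V^{1/2}H^{-1/2})\circ(V^{1/2}H^{-1/2})^{*},
\end{equation*}
\begin{equation*}
V^{1/2}H^{-1}L^{*}=(V^{1/2}H^{-1/2})\circ(LH^{-1/2})^{*},
\end{equation*}
which come from $H^{-1}=H^{-1/2}\circ H^{-1/2}$ on $L^2$ combined with $(V^{1/2}H^{-1/2})^{*}=H^{-1/2}V^{1/2}$ and $(LH^{-1/2})^{*}=H^{-1/2}L^{*}$.

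By Corollary~\ref{th:VH2}, the left-hand factor $V^{1/2}H^{-1/2}$ extends boundedly on $L^p$ for every $1<p<2(q+\epsilon)$ (after relabelling $\epsilon$). Dualising this statement, $H^{-1/2}V^{1/2}=(V^{1/2}H^{-1/2})^{*}$ is $L^p$-bounded whenever $p'\in(1,2(q+\epsilon))$, and in particular for every $p\in[2,\infty)$ since $2(q+\epsilon)>2$. Likewise, the low-$p$ Riesz transform result says $LH^{-1/2}$ is $L^{p'}$-bounded for $1<p'\le 2$, hence $(LH^{-1/2})^{*}=H^{-1/2}L^{*}$ is $L^p$-bounded for every $p\in[2,\infty)$. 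Composing the two factors then gives $L^p$ boundedness of both $V^{1/2}H^{-1}V^{1/2}$ and $V^{1/2}H^{-1}L^{*}$ on the intersection $[2,\infty)\cap(1,2(q+\epsilon))=[2,2(q+\epsilon))$, which contains the desired open range.

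The one technical point I would want to pin down is that on the test classes $f\in C_{0}^{\infty}$ and $\textbf{F}\in C_{0}^{\infty}(\mathbb{R}^{n},\mathbb{C}^{n})$ the formal factorization really computes the operators under study. This is harmless: for such test data $V^{1/2}f$ and $L^{*}\textbf{F}$ both lie in $L^{2}\cap L^{p}$ (using $V\in L^{1}_{loc}$ with compact support of $f$ for the first, and \eqref{LL2} for the second), so the spectral identity $H^{-1}=H^{-1/2}\circ H^{-1/2}$ on $L^{2}$ transfers to the $L^{p}$ composition by density. I do not expect any substantial obstacle beyond this bookkeeping; the proposition is in essence a direct consequence of Corollary~\ref{th:VH2} and the low-$p$ Riesz transform bounds through a duality sandwich, and the main point is simply that the range $p>2$ gives the conjugate $p'<2$ needed to invoke the Riesz transform result on the right-hand side.
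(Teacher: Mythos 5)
Your proof is correct, but it takes a genuinely different route from the paper's. The paper's one-line proof indicates a direct application of Theorem~\ref{theor:shen}: for $f$ or $\mathbf{F}$ supported away from $4Q$, the function $u=H^{-1}V^{1/2}f$ (resp.\ $u=H^{-1}L^{\star}\mathbf{F}$) is a weak solution of $Hu=0$ near $4Q$, so the reverse H\"older estimate \eqref{csv} for $V^{1/2}u$ applies with exponent $2q$; one then feeds this into Theorem~\ref{theor:shen} with $p_0=2$, $q_0=2q$, $S=0$, and uses the self-improving property of $RH_q$ to get the open range $2<p<2(q+\epsilon)$. You instead factor through $H^{-1/2}$, write $V^{1/2}H^{-1}V^{1/2}=(V^{1/2}H^{-1/2})(V^{1/2}H^{-1/2})^{\ast}$ and $V^{1/2}H^{-1}L^{\star}=(V^{1/2}H^{-1/2})(LH^{-1/2})^{\ast}$, and combine Corollary~\ref{th:VH2} with duality and the known low-exponent boundedness of $LH^{-1/2}$. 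Both arguments are sound; the ranges produced agree once $\epsilon$ is relabelled (note $2q+\epsilon$ vs.\ $2(q+\epsilon)$, harmless by self-improvement), and for $V^{1/2}H^{-1}V^{1/2}$ your argument even gives the symmetric interval $\bigl((2(q+\epsilon))',\,2(q+\epsilon)\bigr)$, which is of course also obtainable by self-adjointness. What the paper's proof buys is independence from Corollary~\ref{th:VH2} and its interpolation argument, keeping Section~6 internally consistent with the rest of the Shen-type local machinery. What your proof buys is brevity given the already established $L^p$ boundedness of $V^{1/2}H^{-1/2}$, at the small cost of the bookkeeping you flag: one must justify that the $L^2$ spectral identity $H^{-1}=H^{-1/2}\circ H^{-1/2}$ transfers to a composition of the $L^p$-bounded extensions. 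This is standard because for $f\in C_0^{\infty}$ the intermediate function $H^{-1/2}V^{1/2}f$ (resp.\ $H^{-1/2}L^{\star}\mathbf{F}$) lies in $L^2\cap L^p$, and the $L^2$ operator and its $L^p$ extension agree on that intersection by density; you correctly identify this as the only technical point.
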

\begin{proof}
T effectuate the proof, we apply Theorem \ref{theor:shen} and eventually use equation
\eqref{csv}.
\end{proof}
To prove Theorem \ref{th:1a'}, it suffices to prove the following Proposition:
\begin{pro} \label{thD'}
Let $V \in RH_q$, $1<q\leq+\infty$. Then, for any $2<p<  q^*+\epsilon$ and $\epsilon>0$ depending only on $V$, $f\in C^\infty_{0}(\mathbb{R}^n,\mathbb{C})$ and $\textbf{F}\in C_{0}^\infty(\mathbb{R}^n,\mathbb{C}^n)$,
$$
\|L  H(\textbf{a},V)^{-1} V^{\frac{1}{2}}f\|_p  \le C_{p} \|f\|_{p}, \quad  \|L H(\textbf{a},V)^{-1} L^{\star} \textbf{F}\|_p \le C_p\|\textbf{F}\|_p.
$$
\end{pro}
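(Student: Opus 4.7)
The plan is to apply the $L^p$-boundedness criterion of Theorem \ref{theor:shen} separately to $T_1 = L H(\textbf{a},V)^{-1} V^{1/2}$ and $T_2 = L H(\textbf{a},V)^{-1} L^{\star}$, with $p_0 = 2$ and $q_0 = q^{\star}$ (interpreted as $+\infty$ when $q \ge n$). The $L^2$-boundedness of each $T_i$ is immediate: factoring $T_1 = (L H^{-1/2})(H^{-1/2} V^{1/2})$, both factors are contractions on $L^2$ by the form identity \eqref{LL2} and its adjoint; the argument for $T_2$ is analogous.

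To verify the reverse-type inequality \eqref{T:shen}, fix a cube $Q$, set $\alpha_2 = 4$, and take $f \in L^\infty_{comp}$ (resp.\ $\textbf{F}$) with support in $\mathbb{R}^n \setminus 4Q$. Then $u = H(\textbf{a},V)^{-1} V^{1/2} f$ (resp.\ $u = H(\textbf{a},V)^{-1} L^{\star} \textbf{F}$) is a weak solution of $H(\textbf{a},V) u = 0$ on $4Q$ and $T_i f = L u$, so the local estimates of Section~4 apply. If $q \ge n/2$, Proposition \ref{lemm:6'} (or \eqref{wrhL2'} when $q \ge n$) with $\mu = 3$ yields
$$\bigl(\avert{Q} |T_i f|^{q^{\star}}\bigr)^{1/q^{\star}} \;\le\; C\,\bigl(\avert{3Q} |T_i f|^2\bigr)^{1/2},$$
which is exactly \eqref{T:shen} with $\alpha_1 = 3$ and $S \equiv 0$. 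If instead $q < n/2$, Proposition \ref{lemm:5'} produces the extra term $C\bigl(\avert{3Q} V|u|^2\bigr)^{1/2}$, which must be absorbed into an admissible $S$-part.

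For this, I use the pointwise domination $|V^{1/2} H^{-1} V^{1/2} f| \le V^{1/2}(-\Delta + V)^{-1} V^{1/2} |f|$ furnished by Kato--Simon \eqref{eq:KS} (with the obvious analogue involving $L^{\star}$ for $T_2$). Writing $\hat T$ for the positive dominator on the right, which is $L^s$-bounded for $2 < s < 2(q+\epsilon)$ by the non-magnetic counterpart of Proposition \ref{thC'} proved in \cite{AB}, I set
$$S g \;:=\; \bigl(M |\hat T g|^2\bigr)^{1/2}.$$
Then $S$ is $L^p$-bounded for $2 < p < 2(q+\epsilon)$ by composing the $L^{p/2}$-boundedness of the Hardy--Littlewood maximal operator with $\hat T$, and for every $x \in Q$ one has $\bigl(\avert{3Q} V|u|^2\bigr)^{1/2} \le C\,(S|f|)(x)$, completing the verification of \eqref{T:shen}. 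Theorem \ref{theor:shen} then yields $T_i$ bounded on $L^p$ for $2 < p < q^{\star}$, and the standard self-improvement $V \in RH_q \Rightarrow V \in RH_{q+\epsilon}$ extends the range to $p < q^{\star} + \epsilon$.

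The main delicate point is the consistency of ranges: the auxiliary operator $\hat T$ is available only for $p < 2(q+\epsilon)$, and this constraint is compatible with the target $p < q^{\star}$ precisely because $q < n/2$ (the only case in which $\hat T$ is invoked) forces $q^{\star} < 2q$. The limit case $q \ge n$ is handled by using the pointwise sup-estimate \eqref{wrhL2'} in place of the $L^{q^{\star}}$-bound, which accommodates $q_0 = \infty$ in Shen's criterion.
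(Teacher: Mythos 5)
Your overall strategy mirrors the paper's: apply Shen's criterion (Theorem \ref{theor:shen}) with $p_0 = 2$, $q_0 = q^{\star}$, using the local reverse H\"older estimates of Section 4 (Propositions \ref{lemm:5'} and \ref{lemm:6'}). The $L^2$-boundedness step, the $q\ge n/2$ cases with $S\equiv 0$, the range-consistency check $q<n/2\Rightarrow q^{\star}<2q$, and your treatment of $T_1 = LH(\textbf{a},V)^{-1}V^{1/2}$ via the scalar Kato--Simon domination are all correct.

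The gap is in the treatment of $T_2 = LH(\textbf{a},V)^{-1}L^{\star}$ when $q<n/2$. There is no ``obvious analogue involving $L^{\star}$'' of the Kato--Simon domination. The inequality \eqref{eq:KS} and the bound $|H(\textbf{a},V)^{-1}g|\le H(0,V)^{-1}|g|$ hold pointwise for scalar inputs $g$; they do not transfer to $H(\textbf{a},V)^{-1}L^{\star}\textbf{F}$, because $L^{\star}\textbf{F} = \sum_j(\tfrac{1}{i}\partial_j - a_j)F_j$ contains derivatives of $\textbf{F}$ (which cannot be controlled pointwise by $|\textbf{F}|$) together with the magnetic potential $\textbf{a}$ (which is not gauge invariant and is not controlled by the hypotheses --- only $B = curl\,\textbf{a}$ is). Hence the claimed inequality $\big(\avert{3Q}V|u|^2\big)^{1/2} \le C\,(S|\textbf{F}|)(x)$ with $u = H^{-1}L^{\star}\textbf{F}$ and $S$ built from a non-magnetic dominator $\hat T$ is unsubstantiated, and the argument breaks at exactly the step where you absorb the extra term from Proposition \ref{lemm:5'}. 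The paper sidesteps this by taking $S = M_2\big(V^{1/2}H(\textbf{a},V)^{-1}L^{\star}\big)$, the genuine magnetic operator, and invoking Proposition \ref{thC'}, which is proved directly (via the weighted subharmonic estimate \eqref{csv} and Theorem \ref{theor:shen}, with no reduction to the non-magnetic case) to be $L^p$-bounded for $2<p<2(q+\epsilon)$. Your proof of $T_1$ can stand as written, but for $T_2$ you should replace the domination argument by a direct appeal to Proposition \ref{thC'}, which is already available in this paper and need not be reconstructed from \cite{AB}.
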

 \begin{proof}
Suppose $q<n/2$. 
 Let $Q$ a cube in $\mathbb{R}^n$ and $\textbf{F}\in C^{\infty}_{0}(\mathbb{R}^n)$ supported away from $4Q$. Set $H=H(\textbf{a},V)$, $u=H^{-1} L^{\star}\textbf{F}$ is well defined on $\mathbb{R}^n$. In particular, the support condition on $\textbf{F}$, implies that $u$ is a weak solution of $Hu=0$ in $4Q$. Hence, using Proposition \ref{lemm:5'}, we have
 \begin{equation}
  \big{(} \avert{Q}| LH^{-1}L^{\star} \textbf{F}|^{q^*}dx \big{)}^{1/q^*}\leq C \big{(} \avert{3Q}| L H^{-1} L^{\star}\textbf{F}|^{2}+|V^{\frac{1}{2}}H^{-1}L^{\star}\textbf{F}|^{2} \big{)}^{\frac{1}{2}}.
  \end{equation}
  Then \eqref{T:shen} holds with $T=LH^{-1}L^{\star},\, q_{0}=q^{\star},\, p_{0}=2 \, \textrm{and}\, S=M_{2}(V^{\frac{1}{2}}H^{-1}L^{\star}),$ where $M_{2}f=\big(M|f|^{2}\big)^{\frac{1}{2}}$ and $M$ is the maximal Hardy-Littlewood operator. Since $S$ is $L^p$ bounded for any $1<p\leq 2q$, then, using Proposition \ref{thC'} and the fact that $q^{\star}\leq 2q$, $T$ is bounded on $L^p(\mathbb{R}^{n},\mathbb{C}^{n})$, for $2<p<q^{\star}$.
 
 Now, let $ n/2 \le q < n$. By the same method and using Proposition \ref{lemm:6'}, we obtain 
 \begin{equation}
  \big{(} \avert{Q}| LH^{-1}L^{\star} \textbf{F}|^{q^*}dx \big{)}^{1/q^*}\leq C \big{(} \avert{3Q}| L H^{-1} L^{\star}\textbf{F}|^{2}\big{)}^{\frac{1}{2}}.
  \end{equation}
 Hence, inequality \eqref{T:shen} holds with $T=LH^{-1}L^{\star},\, q_{0}=q^{\star}\, \textrm{and}\,  p_{0}=2$. Thus, $T$ is bounded on $L^p$, $2<p< q^{\star}$. Finally, if $q\ge n$, then $L H^{-1} L^{\star}$ is $L^p$ bounded for $2<p < \infty$.
 
  Using the same argument with $u=H^{-1}V^{\frac{1}{2}}f$, it follows that $LH^{-1}V^{\frac{1}{2}}$ is $L^p$ bounded for $2<p<q^{*}+\epsilon$.

\end{proof}

\section{Second order Riesz transforms}
This section is devoted to the study of some operators using results previously established. We are interested in the behaviour of $L_{j} L_{k} H(\textbf{a},V)^{-1}$ and $V^{\frac{1}{2}} L H(\textbf{a},V)^{-1}$. We also need some properties of the kernel of $H(\textbf{a},V)^{-1}$. Note that, for $V\in L^1_{loc}(\mathbb{R}^n)$, $V\ge 0$ and $\textbf{a}\in L^{2}_{loc}(\mathbb{R}^n)^n$, $H(\textbf{a},V)^{-1}$ is continuous from $L^{1}(\mathbb{R}^n)$ to $L^1_{loc}(\mathbb{R}^n)$.Hence there exists  $\Gamma(x,y)$ a Schwartz kernel associated to this operator .
The following proposition gives some results about this kernel
\begin{pro}\label{pro:fundsol} $\Gamma$ coincides with a measurable function defined on $\mathbb{R}^n\times \mathbb{R}^n$ in  $ \mathbb{C}$, and
\begin{enumerate}
\item We have the following inequality
\begin{equation}\label{*}|\Gamma (x,y)| \leq C_{n} |x-y|^{2-n} \, \, \textrm{a.e}.
\end{equation}
\item For almost every $y\in \mathbb{R}^{n}$, $u:x\longmapsto \Gamma (x,y)$ is a weak solution of $H(\textbf{a},V)(u)=0$ on $\mathbb{R}^{n}\setminus \{y\}$. 
\end{enumerate}

\end{pro}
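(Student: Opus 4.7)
The proposition splits naturally into two claims: (i) $\Gamma$ is a measurable function obeying the Green-type bound \eqref{*}, and (ii) away from the diagonal, each slice $\Gamma(\cdot,y)$ is a weak solution of $H(\textbf{a},V)u=0$. The plan is to handle (i) via the heat semigroup using Kato-Simon domination, and (ii) by a combination of self-adjointness/symmetry of the kernel and a Caccioppoli upgrade of regularity.

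For (i), I would construct $\Gamma$ from the heat semigroup. The Kato-Simon domination \eqref{dom} shows that the integral kernel $p_t^H(x,y)$ of $e^{-tH(\textbf{a},V)}$ is pointwise dominated by the classical Gaussian $(4\pi t)^{-n/2}e^{-|x-y|^2/(4t)}$, and in particular exists as a measurable function. Since $H$ is nonnegative and self-adjoint, I write $H^{-1}=\int_0^\infty e^{-tH}\,dt$ as an operator between appropriate Sobolev-type dual spaces, mirroring the treatment of the Newton kernel $c_n|x-y|^{2-n}$ for $(-\Delta)^{-1}$. Fubini then yields
$$\Gamma(x,y)=\int_0^\infty p_t^H(x,y)\,dt,$$
jointly measurable in $(x,y)$, and integrating the Gaussian bound under the integral sign gives exactly
$$|\Gamma(x,y)|\le \int_0^\infty (4\pi t)^{-n/2} e^{-|x-y|^2/(4t)}\,dt = C_n\,|x-y|^{2-n},$$
which is \eqref{*}.

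For (ii), I fix $y$ outside a negligible set and set $u(x):=\Gamma(x,y)$. The bound \eqref{*} already places $u\in L^\infty_{loc}(\mathbb{R}^n\setminus\{y\})\subset L^2_{loc}$. To obtain the distributional equation $H(\textbf{a},V)u=0$ on $\mathbb{R}^n\setminus\{y\}$, I would exploit self-adjointness of $H$, which gives the kernel symmetry $\Gamma(x,y)=\overline{\Gamma(y,x)}$. For any test function $\phi\in C^\infty_0(\mathbb{R}^n\setminus\{y\})$, the identity $\phi=H^{-1}(H\phi)$ rewritten through the kernel reads $\int\Gamma(y,z)H\phi(z)\,dz=\phi(y)$; since $y\notin\mathrm{supp}\,\phi$ the right-hand side vanishes, and conjugating via the symmetry yields $\int u(x)\overline{H\phi(x)}\,dx=0$, which is the required weak equation. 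To upgrade to $L_k u,\,V^{1/2}u\in L^2_{loc}(\mathbb{R}^n\setminus\{y\})$, i.e. to membership in $W(\mathbb{R}^n\setminus\{y\})$, I would approximate by $u_\varepsilon=H^{-1}f_\varepsilon$ with $f_\varepsilon$ a smooth approximation of $\delta_y$ concentrated in $B(y,\varepsilon)$, apply the Caccioppoli inequality \eqref{eq:cc} (with right-hand side zero) to $u_\varepsilon$ on any cube $Q$ whose double avoids $B(y,\varepsilon)$, and pass to the limit using the $L^2_{loc}$ convergence $u_\varepsilon\to u$ away from $y$ together with the uniform control from \eqref{*}.

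The main obstacle I foresee is making the formula $H^{-1}=\int_0^\infty e^{-tH}\,dt$ rigorous as an operator identity, since $0$ can lie in the spectrum of $H$ on $L^2$ and so $H^{-1}$ must be understood between well-chosen Sobolev-type spaces of dual type, exactly as in the classical treatment of $(-\Delta)^{-1}$. Once this framework is in place and the kernel symmetry is carefully justified, both the distributional equation and the Caccioppoli-based regularity upgrade are routine.
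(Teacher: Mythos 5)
Your treatment of part (1) is essentially the paper's: both of you dominate $\Gamma$ by the Green function of $-\Delta$ via the semigroup bound \eqref{dom} and the formula $H^{-1}=\int_0^\infty e^{-tH}\,dt$, then integrate the Gaussian to get $|x-y|^{2-n}$. No issue there.

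For part (2) there is a genuine gap in the route you chose. Under the standing hypotheses $\textbf{a}\in L^2_{loc}$, $V\in L^1_{loc}$ there is no reason for $C^\infty_0(\mathbb{R}^n\setminus\{y\})$ to lie in $\mathcal{D}(H)$: writing out $\sum_j L_jL_j\phi$ produces a term containing $(\nabla\cdot\textbf{a})\phi$, which is only a distribution, and $V\phi$ need not be in $L^2$. So the identity $\phi=H^{-1}(H\phi)$, on which your symmetry computation hinges, does not make sense a priori, and neither does the pairing $\int\Gamma(y,z)H\phi(z)\,dz$. Moreover, even granting all that, the conclusion $\int u\,\overline{H\phi}\,dx=0$ is not the definition of a weak solution used in the paper; passing from it to $\mathcal{Q}(u,\phi)=\sum_j\int L_ju\,\overline{L_j\phi}+\int Vu\bar\phi=0$ requires integration by parts, which in turn requires knowing $L_ju,\,V^{1/2}u\in L^2_{loc}(\mathbb{R}^n\setminus\{y\})$ — i.e.\ precisely the regularity you propose to establish afterwards. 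The logic is thus circular. Finally, a smaller point: the kernel symmetry and the identity $\int\Gamma(y,\cdot)H\phi=\phi(y)$ would only hold for a.e.\ $y$, while you want them for the specific $y$ you fixed, so even if all else worked you would still have to manage the null set carefully.

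The good news is that the last step you describe — approximating $\delta_y$ by mollifiers $f_\varepsilon$ (the paper's $\rho_j^y$), noting that $u_\varepsilon=H^{-1}f_\varepsilon\in\dot{\mathcal{V}}$ is a genuine weak solution of $Hu=0$ away from the shrinking support of $f_\varepsilon$, applying Caccioppoli to get uniform local $L^2$ bounds on $Lu_\varepsilon$ and $V^{1/2}u_\varepsilon$ on cubes avoiding $y$, and passing to weak $L^2_{loc}$ limits — is exactly the paper's argument, and it already yields the weak equation directly by passing to the limit in $\mathcal{Q}(u_\varepsilon,\phi)=\langle f_\varepsilon,\phi\rangle=0$, without any appeal to kernel symmetry. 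The fix is therefore simple: drop the symmetry detour entirely and promote the Caccioppoli approximation from a ``regularity upgrade'' to the main proof, tracking that the a.e.\ $L^1_{loc}$ convergence $\Gamma_j(\cdot,y)\to\Gamma(\cdot,y)$ together with the uniform Caccioppoli bounds identifies the weak limits of $L_x\Gamma_j(\cdot,y)$ and $V^{1/2}\Gamma_j(\cdot,y)$ with $L_x\Gamma(\cdot,y)$ and $V^{1/2}\Gamma(\cdot,y)$ for a.e.\ $y$.
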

\begin{proof}Let $H=H(\textbf{a},V)$. Inequality \eqref{dom} and $H^{-1} =\int^{\infty}_{0} e^{-tH} dt$ imply that $\Gamma$ is dominated by the Green function of the Laplacien. Hence, inequality \eqref{*} follows.
Now we will prove (2). Let $f\in C^{\infty}_ {0}(\mathbb{R}^{n})$, using (1) we have the following integral representation 
$$H^{-1} (f) (x)= \int_{\mathbb{R}^{n}} \Gamma (x,y) f(y) dy$$
for almost every $x$.

Fix a function $\rho \in C^{\infty}_{0}(\mathbb{R}^{n})$, even,  supported in $[-1,1]^{n}$ with $\int \rho=1$.\\ Set $\rho^{y}_{j}=2^{nj} \rho(2^{j}(z-y))$ for any $y,\, z \in \mathbb{R}^{n}$ and $j\in \mathbb{N}$. Using Fubini theorem, we obtain 
$$H^{-1}(\rho^{0}_{j}*f)(x)= \int_{\mathbb{R}^{n}}\Gamma_{j}(x,y)f(y) dy,\,\, a.e$$
where
$$\Gamma_{j} (x,y)= \int_{\mathbb{R}^{n}} \Gamma (x,z) 2^{nj}\rho(2^{j}(z-y))dz=H^{-1}(\rho^{y}_{j})(x).$$
Using Lebesgue's dominated convergence theorem, we obtain:
$$\lim_{j\rightarrow \infty}H^{-1}(\rho^{0}_{j}*f)=H^{-1}(f)\,\, \textrm{a.e},$$
and $$\lim_{j\rightarrow \infty} \Gamma_{j}=\Gamma \, \, \textrm{a.e in }\, L^{1}_{loc}.$$
Since $\rho^{y}_{j}\in C^{\infty}_{0}(\mathbb{R}^{n})$, $H^{-1}(\rho^{y}_{j})\in \dot{\mathcal{V}}$ and $H(\Gamma_{j}(., y))=\rho^{y}_{j} \in \mathcal{D}'(\mathbb{R}^{n})$. Hence $\Gamma_{j}(.,y)$ is a weak solution of $Hu=0$ away from the support of $\rho^{y}_{j}$, i.e $\mathbb{R}^{n}\setminus Q(y,2^{j})$. Here $Q(c,R)$ is the cube centred in c with sidelength $R$.
By \eqref{*} and a similar argument to that used for the proof of Caccioppoli type inequality, we obtain for $R>2^{-j+2}$,
$$\int_{\mathbb{R}^{n}\setminus Q(y, 2R)} |L_{x}\Gamma_{j}(x,y)|^{2}+ V(x) |\Gamma_{j}(x,y)|^{2} dx\leq \frac{C}{R^{2}}\int_{Q(y,2R)\setminus Q(y, R)}|\Gamma_{j}(x,y)|^{2}\leq CR^{2-n}.$$
 Hence, for any $y$, $L_{x}\Gamma_{j}(.,y)$ and $V^{\frac{1}{2}}\Gamma_{j}(., y)$ admit subsequences that weakly converge in $L^{2}_{loc}(\mathbb{R}^{n}\setminus\{y\})$. It is easy to prove that their limits are $L_{x}\Gamma(., y)$ and $V^{\frac{1}{2}}\Gamma(., y)$ in $\mathcal{D}'(\mathbb{R}^{n})$ for almost every $y\in \mathbb{R}^{n}$. We deduce that for almost every $y\in \mathbb{R}^{n}$, $\Gamma(., y)$ is a weak solution of $Hu=0$ on $\mathbb{R}^{n}\setminus\{y\}$ (by taking limits on the equation).

\end{proof}
Proposition \ref{pro:fundsol} and the following technical lemma are two important tools to prove the main results of this section.
\begin{lemm}\label{lem*}Let $V\in RH_{n/2}$. Then
$$\sum_{l\in\mathbb{Z}}\frac{\big(4^{l}\avert {Q(y, 2^{l})}V\big)^{\frac{1}{2}}}{1+ 4^{l} \avert{Q(y,2^{l})}V}\leq C(V)$$
uniformly on $y\in \mathbb{R}^{n}$.

\end{lemm}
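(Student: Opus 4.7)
\smallskip

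\textbf{Proof proposal.} The plan is to reduce the series to a geometric series around a single critical scale, using only the openness of the reverse H\"older class. Set
$$
\varphi(R)=R^{2}\avert{Q(y,R)}V, \qquad a_{l}=\varphi(2^{l}),
$$
so the sum to bound is $\sum_{l\in\mathbb{Z}} \sqrt{a_{l}}/(1+a_{l})$. Since $\sqrt{t}/(1+t)\le \min(\sqrt{t},1/\sqrt{t})$, convergence is forced once one knows that $a_{l}$ has at-least-polynomial growth in $2^{l}$ in both directions away from one critical scale.

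\smallskip

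\textbf{Step 1: a scale comparison for $\varphi$.} Because $V\in RH_{n/2}$, the openness of the reverse H\"older classes gives $q>n/2$ with $V\in RH_{q}$, so $\delta:=2-n/q>0$. For concentric cubes $Q(y,r)\subset Q(y,R)$, H\"older and $RH_{q}$ on the larger cube yield
$$
\avert{Q(y,r)}V\;\le\;\Big(\frac{R}{r}\Big)^{n/q}\Big(\avert{Q(y,R)}V^{q}\Big)^{1/q}\;\le\;C\Big(\frac{R}{r}\Big)^{n/q}\avert{Q(y,R)}V,
$$
and multiplying by $r^{2}$ gives the key inequality
$$
\varphi(r)\;\le\;C_{0}\Big(\frac{r}{R}\Big)^{\delta}\varphi(R),\qquad 0<r\le R.\tag{$\ast$}
$$
Applied in the other direction, $(\ast)$ also produces $\varphi(R)\ge c_{0}(R/r)^{\delta}\varphi(r)$ for $R\ge r$. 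Thus $\varphi$ grows at least like a power of $R$ at large scales and decays at least like a power at small scales (assuming $V\not\equiv 0$, which is the only non-trivial case).

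\smallskip

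\textbf{Step 2: selection of a critical scale.} Since $a_{l}\to 0$ as $l\to-\infty$ and $a_{l}\to +\infty$ as $l\to+\infty$, choose $l_{0}=l_{0}(y)\in\mathbb{Z}$ with $a_{l_{0}-1}\le 1<a_{l_{0}}$. In particular $1\le a_{l_{0}}\le C$ for some constant depending only on $C_{0}$. Applying $(\ast)$ with $(r,R)=(2^{l},2^{l_{0}})$ for $l\le l_{0}$ and with $(r,R)=(2^{l_{0}},2^{l})$ for $l\ge l_{0}$ gives
$$
a_{l}\le C\,2^{\delta(l-l_{0})}\;\;(l\le l_{0}),\qquad a_{l}\ge c\,2^{\delta(l-l_{0})}\;\;(l\ge l_{0}).
$$

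\smallskip

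\textbf{Step 3: summation.} Split the sum at $l_{0}$. For $l\le l_{0}$, use $\sqrt{a_{l}}/(1+a_{l})\le\sqrt{a_{l}}\le C\,2^{\delta(l-l_{0})/2}$, which sums geometrically to a constant. For $l>l_{0}$, use $\sqrt{a_{l}}/(1+a_{l})\le 1/\sqrt{a_{l}}\le C\,2^{-\delta(l-l_{0})/2}$, which again sums to a constant. The two constants depend only on $C_{0}$, $c_{0}$ and $\delta$, i.e.\ only on $n$ and the $RH_{n/2+\epsilon}$ constant of $V$, so the bound is uniform in $y$.

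\smallskip

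\textbf{Expected obstacle.} The step that carries the whole argument is the scale comparison $(\ast)$; once $\delta>0$ is available (which is exactly where the openness of $RH_{n/2}$ is used in an essential way), the rest is two convergent geometric series. A small point to watch is the degenerate case in which $\varphi$ fails to cross the value $1$ (e.g.\ $V\equiv 0$, or potentials for which the critical scale ``lives at $\pm\infty$''); these cases are either trivial or handled by taking $l_{0}=\pm\infty$ and keeping only one of the two geometric tails.
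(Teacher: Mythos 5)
Your proof is correct and takes essentially the same route as the paper: the paper cites Shen's Lemma 1.2 of \cite{Sh1} for exactly the scale comparison $(\ast)$ that you rederive from $RH_q$ self-improvement and H\"older, then pivots on the critical scale and sums two geometric tails just as you do. One small caution: the upper bound $a_{l_0}\le C$ does not follow from $(\ast)$ alone (that inequality only gives a lower bound on $a_{l_0}$ in terms of $a_{l_0-1}$); it requires the doubling of $V\,dx$, which holds since $V\in A_\infty$ — alternatively you can sidestep it entirely, as the paper implicitly does, by comparing the lower tail to $a_{l_0-1}\le 1$ and the upper tail to $a_{l_0}>1$.
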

\begin{proof}
Shen proved in \cite{Sh1}, Lemma 1.2, that there exists an $\alpha>0$ such that for any $r,\, R$ with $r<R$ and $y\in \mathbb{R}^{n}$,
$$r^{2} \avert {Q(y,r)}V \leq C\big( \frac{r}{R}\big)^{\alpha} R^{2} \avert {Q(y,R)}V.$$
Let $j$ be the biggest integer such that $4^{j} \avert{Q(y, 2^{j})}V<1$. We  have
$$\sum_{l\in\mathbb{Z}}\frac{\big(4^{l}\avert {Q(y, 2^{l})}V\big)^{\frac{1}{2}}}{1+ 4^{l} \avert{Q(y,2^{l})}V}\leq C(\alpha )\big(\big(4^{j}\avert {Q(y, 2^{j})}V\big)^{\frac{1}{2}}+\big( 4^{j+1} \avert{Q(y,2^{j+1})}V\big)^{-\frac{1}{2}}\big)\leq2 C(\alpha).$$

\end{proof}
\begin{rem}This lemma does not hold for $V(x)=\frac{1}{|x|^{2}} \in \cup_{q<\frac{n}{2}}RH_{q}$.
\end{rem}

Now we will study the $L^p$ boundedness of $V^{\frac{1}{2}}LH(\textbf{a},V)^{-1}$:

\begin{pro}\label{pr}Suppose $\textbf{a}\in L^{2}_{loc}(\mathbb{R}^{n})^{n}$ and $V  \in RH_{n/2}$. Also assume that there exists a constant $C>0$ such that for any cube $Q$ in $\mathbb{R}^{n}$:

 \begin{equation}
 \left\{\begin{array}{ll}
 \sup_{Q} |B| \leq C \avert {Q} V, \\ \sup_{Q} | \nabla B |\leq C  (\avert{Q} V)^{3/2}.
  \end{array}
 \right.
 \end{equation}
 Then, $V^{\frac{1}{2}} L  H(\textbf{a},V)^{-1}$ is $L^1$ bounded.
\end{pro}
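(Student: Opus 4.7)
The plan is to reduce the $L^1$ bound to a kernel estimate and then bound the kernel annulus by annulus using that $\Gamma(\cdot,y)$ is a weak solution of $H(\mathbf{a},V)u=0$ away from $y$.

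First, using Proposition~\ref{pro:fundsol}, for $f\in L^{\infty}_{\mathrm{comp}}(\mathbb{R}^{n})$ we write
\[
(V^{1/2}LH(\mathbf{a},V)^{-1}f)(x)=V^{1/2}(x)\,L_{x}\!\!\int\Gamma(x,y)f(y)\,dy,
\]
and by Fubini (justified after first regularizing as in Proposition~\ref{pro:fundsol}) the $L^{1}$ bound for $V^{1/2}LH(\mathbf{a},V)^{-1}$ reduces to the uniform kernel estimate
\[
\mathcal{K}(y):=\int_{\mathbb{R}^{n}}V^{1/2}(x)\,|L_{x}\Gamma(x,y)|\,dx\le C,\qquad \text{uniformly in } y\in\mathbb{R}^{n}.
\]

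To estimate $\mathcal{K}(y)$, I would decompose $\mathbb{R}^{n}\setminus\{y\}=\bigcup_{l\in\mathbb{Z}}A_{l}(y)$ where $A_{l}(y)=Q(y,2^{l+1})\setminus Q(y,2^{l})$. Cover each annulus by a bounded (dimension-dependent) number of cubes $Q_{l,j}$ of sidelength $R_{l}\sim 2^{l}$, each satisfying $2Q_{l,j}\subset\mathbb{R}^{n}\setminus\{y\}$. Since $\Gamma(\cdot,y)$ is a weak solution of $H(\mathbf{a},V)u=0$ on $2Q_{l,j}$, Cauchy--Schwarz combined with Lemma~\ref{lemm:1'} (applied with a large integer $k$) and the Caccioppoli inequality~\eqref{eq:cc} give
\[
\int_{Q_{l,j}}V^{1/2}|L_{x}\Gamma|\le \Bigl(\int_{Q_{l,j}}V\Bigr)^{1/2}\Bigl(\int_{Q_{l,j}}|L_{x}\Gamma|^{2}\Bigr)^{1/2}\le C\,\frac{(R_{l}^{n}\avert{Q_{l,j}}V)^{1/2}\,R_{l}^{(2-n)/2}}{(1+R_{l}^{2}\avert{Q_{l,j}}V)^{k/2}}\sup_{2Q_{l,j}}|\Gamma|.
\]
Inserting the pointwise bound $|\Gamma(x,y)|\le C|x-y|^{2-n}\le CR_{l}^{2-n}$ from Proposition~\ref{pro:fundsol}(1) and choosing $k\ge 2$, this simplifies to
\[
\int_{A_{l}(y)}V^{1/2}|L_{x}\Gamma(x,y)|\,dx\le C\,\frac{\bigl(4^{l}\avert{Q(y,2^{l})}V\bigr)^{1/2}}{1+4^{l}\avert{Q(y,2^{l})}V},
\]
where the doubling property of $V\,dx$ (coming from $V\in A_{\infty}$) is used to replace $\avert{Q_{l,j}}V$ by $\avert{Q(y,2^{l})}V$ up to a constant.

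Summing over $l\in\mathbb{Z}$ and invoking Lemma~\ref{lem*} (which requires $V\in RH_{n/2}$), we obtain $\mathcal{K}(y)\le C(V)$, uniformly in $y$, which concludes the proof. The main technical point is the chained use of Lemma~\ref{lemm:1'} and Caccioppoli with a high enough power $k$ so that the resulting series has exactly the form of Lemma~\ref{lem*}; note that the hypotheses on $B$ and $\nabla B$ enter only through the gauge transform built into the weak-solution estimates of Section~4, and the $RH_{n/2}$ hypothesis on $V$ is precisely what is needed for the summability step, explaining the remark after Lemma~\ref{lem*} that the borderline case $V(x)=|x|^{-2}$ is excluded.
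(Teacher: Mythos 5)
Your proof follows the paper's own argument essentially line for line: reduce the $L^1$ bound to the uniform kernel estimate $\sup_{y}\int_{\mathbb{R}^n} V^{1/2}(x)\,|L_x\Gamma(x,y)|\,dx<\infty$, decompose $\mathbb{R}^n\setminus\{y\}$ dyadically into cubes of sidelength $\sim 2^l$ at distance $\sim 2^l$ from $y$, and on each such cube combine Cauchy--Schwarz, the Caccioppoli inequality together with the polynomial gain of Lemma~\ref{lemm:1'}, and the pointwise Green-function bound of Proposition~\ref{pro:fundsol} to obtain the summand $(4^l\avert{Q(y,2^l)}V)^{1/2}/(1+4^l\avert{Q(y,2^l)}V)$, which is then summed via Lemma~\ref{lem*} using $V\in RH_{n/2}$. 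The only blemish is a sign slip in the exponent of your intermediate display --- the Caccioppoli/Lemma~\ref{lemm:1'} factor should read $R_l^{(n-2)/2}$ rather than $R_l^{(2-n)/2}$ --- but the quantity you actually carry forward after inserting $\sup_{2Q_{l,j}}|\Gamma|\le CR_l^{2-n}$ is correct.
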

\begin{proof}It suffices to prove 
$$esssup_{y\in \mathbb{R}^{n}}\int_{\mathbb{R}^{n}}V^{\frac{1}{2}}(x)|L_{x}\Gamma(x,y)|dx <+\infty.$$
To simplify the proof we will take $y=0$. Set $\Gamma(.,0)$ the weak solution of $Hu=0$ on $\mathbb{R}^{n}\setminus \{0\}$. Let $(Q_{l,k})$ be a Whitney decomposition of $\mathbb{R}^{n}\setminus \{0\}$:  $d(Q_{l,k},0)\sim 2^{l}$ the sidelength of $Q_{l,k}$ and $k$ belongs the finite set of indices of cardinality $2^{n}-1$. Hence, using the Cauchy-Schwarz inequality and \eqref{*}.
\begin{align*}\int_{\mathbb{R}^{n}} V^{\frac{1}{2}}(x)|L_{x}\Gamma(x,0)| dx &=\sum_{l,k}\int_{Q_{l,k}} V^{\frac{1}{2}}(x)|L_{x}\Gamma(x,0)|dx
 \\
& \leq C\sum_{l,k}|Q_{l,k}| \big( \avert {Q_{l,k}} V\big)^{\frac{1}{2}} \big( \avert {Q_{l,k}} |L_{x}\Gamma (x,0)|^{2}dx\big)^{\frac{1}{2}}
\\
& \leq C\sum_{l,k}|Q_{l,k}| \big (\avert {Q_{l,k}} V\big)^{\frac{1}{2}}\big(1+4^{l}\avert {Q_{l,k}} V\big)^{-1} \big(4^{-l} \avert {Q_{l,k}} |\Gamma (x,0)|^{2}dx\big)^{\frac{1}{2}}
\\
& \leq \sum_{l,k} \frac{\big (4^{l}\avert {Q_{l,k}} V\big)^{\frac{1}{2}}}{(1+4^{l}\avert {Q_{l,k}} V\big)}.
\end{align*}
Since $V$ is in $A_{\infty}$, then $\avert {Q_{l,k}} V\sim \avert {Q(0,2^{l})}V$ uniformly on $l$ and $k$. Hence by Lemma \ref{lem*} as $V\in RH_{n/2}$.
\end{proof}

\textbf{Proof of Theorem \ref{vl}:} Since $T=V^{\frac{1}{2}} L H(\textbf{a},V)^{-1}$ is $L^1$ bounded , then it suffices to apply Theorem \ref{theor:shen} to the operator $T$  with $p_{0}=1$ and $q_{0}=\frac{2qn}{3n-2q}$ if $q<n$ and $q_{0}=2q$ if $q\geq n$. 

Let $Q$ a cube of $\mathbb{R}^n$ and $f\in L^\infty_{comp}(\mathbb{R}^n)$ with support away from $4Q$.
We know that $u=H(\textbf{a},V)^{-1}f$ is a weak solution of $H(\textbf{a},V)u =0 $ in the neighbourhood of $4Q$.
 There exists $s<q^*$ such that
$\frac 1 {q_{0}} = \frac 1 {2q} +  \frac 1 s$.
Then, 
$$\big(  \avert{Q} (V^{\frac{1}{2}} |L u | )^{q_{0}} \big)^{1/q_{0}}  \le \big(  \avert{Q} V^{q} \big)^{\frac{q}{2}} \big(  \avert{Q}  |L u | ^{s} \big)^{1/s}.$$
Using the remark \ref{rem6} and $V^{\frac{1}{2}} \in RH_{2r}$ ( we used 
Proposition \ref{11.1}) we obtain
$$\big(  \avert{Q} (V^{\frac{1}{2}} |L u | )^{q_{0}} \big)^{1/q_{0}}\leq C \big(  \avert{Q} V^{\frac{1}{2}} \big) \big(  \avert{2 Q}  |L u | ^{\delta } \big)^{1/\delta },$$
for any $\delta >0$.

 Let $\delta >0$ such that $V^{\frac{1}{2}} \in A_{1/\delta }$, the Muckenhoupt class. We know that (\cite{St},chap V)
$$\big(  \avert{2 Q}  |g | ^{\delta } \big)^{1/\delta }\leq C\big( \frac 1 {  \avert{2Q}V^{\frac{1}{2}}}\big) \int_{2Q} V^{\frac{1}{2}}  |g |,\, \,\, \textrm{for any measurable and non-negative function $g$ }.$$
Hence
$$\big(  \avert{Q} (V^{\frac{1}{2}} |L u | )^{q_{0}} \big)^{1/q_{0}}\leq C  \avert {2Q} (V^{\frac{1}{2}}  |L u |),$$
and \eqref{T:shen} holds with $S=0$. Thus $T$ is $L^p$ bounded for any $1<p<q_{0}$.

We finish the proof using the self-improvement of the reverse H\"older classes.

Now we will use the previous results to get the proof of Theorem \ref{max}.

\paragraph{\textit{Proof of Theorem \ref{max}:}}

Set $H(\textbf{a},V)=H$ and $H(\textbf{a},0)=H_{0}$.  $$L_{s}L_{k} H^{-1}= L_{s} H^{-1} L_{k} + L_{s} [L_{k},H^{-1}].$$
  For every $j\geq 1$, $L_{j}H^{-\frac{1}{2}}$ is $L^p$ bounded for any $1<p<\infty$ then $L_{s} H^{-1} L_{k}$ is $L^p$ bounded for any $1<p< \infty$.
 Since
 $$[L_{k},H^{-1}]= -H^{-1} [L_{k},H]H^{-1},$$
$$[L_{k},H]=[L_{k},H(\textbf{a},0)]+ [L_{k},V].$$
Then $$L_{s}H^{-1} [L_{k},V]H^{-1}=(L_{s}H^{-1}L_{k})(V H^{-1})-(L_{s}H^{-1}V^{\frac{1}{2}})( V^{\frac{1}{2}}L_{k}H^{-1}).$$
Proposition \ref{thD'}, Theorem \ref{th:VH} and Theorem \ref{vl} imply the $L^p$ boundedness of $L_{s}H^{-1} [L_{k},V]H^{-1}$ for any $1<p<q$.

We would now like to study the behaviour of $L_{s}H^{-1} [L_{k},V]H^{-1}$.
We know that $$[L_{k},H_{0}]=\sum_{j}b_{kj}L_{j}- \partial_{j}b_{kj} . $$ 
Here $b_{kj}$ are $\partial_{j} b_{kj}$ the operators of multiplication by $b_{kj}$ and $\partial_{j}b_{kj}$.
 
It follows through \eqref{eq:star}
 $|B(x)|\leq C V(x)$ and $|\nabla B(x)|\leq CV^{3/2}(x)$, for almost every $x\in \mathbb{R}^{n}$. Hence, $L_{s} H^{-1} b_{kj} L_{j} H^{-1}$ is $L^p$ bounded if $L_{s} H^{-1}V L_{j} H^{-1}$ is $L^p$ bounded. Moreover $$L_{s} H^{-1}V L_{j} H^{-1}=(L_{s} H^{-1}V^{\frac{1}{2}})\,(V^{\frac{1}{2}} L_{j} H^{-1}),$$
 is $L^p$ bounded for $1<p< q_{0}$, where $q_{0}=\frac{2q+n}{3n-2q}$ if $q<n$ and $2q$ if $q\ge n$. Here we have used Proposition \ref{thD'} and Theorem \ref{vl}.\\ 
 $L_{s} H^{-1} \partial_{j}b_{kj}  H^{-1}$ is $L^p$ bounded if $L_{s} H^{-1}V^{3/2} H^{-1}$ is $L^p$ bounded. We could also say
 $$L_{s} H^{-1}V^{3/2} H^{-1}=(L_{s} H^{-1}V^{\frac{1}{2}})\,( VH^{-1}),$$
which is bounded for $1<p<q$ using Proposition \ref{thD'} and Theorem \ref{th:VH}.
  Hence, $L_{s} [L_{k},H^{-1}]$ is $L^{p}$ bounded for $1<p< q+\epsilon$. $\square$

\end{document}